\documentclass[letter,11pt]{article}

\usepackage[margin=1.2 in, top=1 in, bottom= 1.2 in]{geometry}

\usepackage[normalem]{ulem}

\usepackage{tikz}
\usetikzlibrary{hobby}
\usepackage{tikz-cd}
\usepackage{adjustbox}
\usepackage{fbox}

\usetikzlibrary{matrix, calc, arrows}

\usepackage{bibentry}
\usepackage[linktocpage=true]{hyperref}


\usepackage{lipsum}
\newcommand\blfootnote[1]{%
  \begingroup
  \renewcommand\thefootnote{}\footnote{#1}%
  \addtocounter{footnote}{-1}%
  \endgroup
}

\makeatletter
\g@addto@macro\normalsize{
  \setlength\abovedisplayskip{8pt}
  \setlength\belowdisplayskip{8pt}
  \setlength\abovedisplayshortskip{8pt}
  \setlength\belowdisplayshortskip{8pt}
  }
\makeatother

\usepackage{tocloft}

\setlength{\cftbeforesecskip}{0pt}
\setlength\cftparskip{0pt}
\setlength\cftaftertoctitleskip{7pt}

\interfootnotelinepenalty=10000

\usepackage[english]{babel}
\usepackage{amsfonts}
\usepackage{mathrsfs}
\usepackage{bbm}
\usepackage{latexsym}
\usepackage{math dots}
\usepackage{amssymb}
\usepackage{mathtools}

\usepackage{enumitem}
\setlist{nolistsep}
\usepackage{amsthm}
\usepackage[capitalize]{cleveref}
\crefformat{footnote}{#2\footnotemark[#1]#3} 



\newcommand\eqnitem[1][]{%
  \ifx\relax#1\relax  \item \else \item[#1] \fi
  \abovedisplayskip=0pt\abovedisplayshortskip=0pt~\vspace*{-\baselineskip}}

\newtheoremstyle{plain}{3mm}{3mm}{\slshape}{}{\bfseries}{.}{.5em}{}
\newtheoremstyle{definition}{2mm}{2mm}{}{}{\bfseries}{.}{.5em}{}
\theoremstyle{plain}
	
\newtheorem{theorem}{Theorem}

\newtheorem{lemma}[theorem]{Lemma}

\newtheorem{question}[theorem]{Question}
\theoremstyle{definition}
\newtheorem{definition}[theorem]{Definition}
\newtheorem{remark}[theorem]{Remark}
\newtheorem{example}[theorem]{Example}

\theoremstyle{plain}
\newcounter{MainTheoremCounter}

\newtheorem{Maintheorem}[MainTheoremCounter]{Theorem}

\theoremstyle{plain}
\newtheorem*{namedthm}{\namedthmname}
\newcounter{namedthm}
\makeatletter
	
\makeatother

\usepackage{chngcntr}
\counterwithin{theorem}{section}

\numberwithin{equation}{section}

\allowdisplaybreaks

\usepackage{xcolor}
\definecolor{Color2}{rgb}{0.78, 0.11, 0.0}
\hypersetup{citecolor = black,colorlinks,
			linkcolor = black,
			urlcolor = Color2}

\usepackage{titlesec}
\titleformat{\section}
  {\Large\center\bfseries}
  {\thesection.}{.7em}{}
\titlespacing*{\section}{0pt}{3.5ex plus 0ex minus 0ex}{1.5ex plus 0ex}
\titleformat{\subsection}
  {\large\center\bfseries}
  {\thesubsection.}{.7em}{}
\titlespacing*{\subsection}{0pt}{3.5ex plus 0ex minus 0ex}{1.5ex plus 0ex}
\titleformat{\subsubsection}
  {\center\bfseries}
  {\thesubsubsection.}{.7em}{}
\titlespacing*{\subsubsection}{0pt}{3.5ex plus 0ex minus 0ex}{1.5ex plus 0ex}

\addto\captionsenglish{}

\usepackage{titling}
\setlength{\droptitle}{-4em}


\usepackage{authblk}


\newcommand{\Cech}{\v{C}ech}

\newcommand{\Szemeredi}{Szemer\'{e}di}

\hyphenation{Min-kow-ski}


\newcommand{\supp}{{\normalfont\text{supp}}\,}

\newcommand{\eps}{\epsilon}
\newcommand{\N}{\mathbb{N}}
\newcommand{\Z}{\mathbb{Z}}
\newcommand{\R}{\mathbb{R}}

\newcommand{\Q}{\mathbb{Q}}
\newcommand{\T}{\mathbb{T}}

\newcommand{\defeq}{\vcentcolon=}

\newcommand\restr[2]{{ \left.\kern-\nulldelimiterspace #1 \right|_{#2}}}


\renewcommand{\epsilon}{\varepsilon}
\renewcommand{\leq}{\leqslant}
\renewcommand{\geq}{\geqslant}
\renewcommand{\setminus}{\backslash}



\usepackage[normalem]{ulem}


\DeclareMathOperator{\upclose}{\uparrow}
\DeclareMathOperator{\bigupclose}{\big\uparrow}

\newcommand{\syndetic}{\mathcal{S}}
\newcommand{\thick}{\mathcal{T}}

\newcommand{\dsyndetic}{d\mathcal{S}}

\newcommand{\dcsyndetic}{dc\mathcal{S}}
\newcommand{\dcthick}{dc\mathcal{T}}

\newcommand{\PS}{\mathcal{PS}}

\newcommand{\cubes}{\mathcal{Q}}

\newcommand{\central}{\mathcal{C}}
\newcommand{\IP}{\mathcal{IP}}

\newcommand{\filter}{\mathscr{F}}
\newcommand{\filtertwo}{\mathscr{G}}

\newcommand{\class}{\mathscr{F}}

\newcommand{\family}{\mathscr{F}}
\newcommand{\familyone}{\mathscr{F}}
\newcommand{\familytwo}{\mathscr{G}}
\newcommand{\familythree}{\mathscr{H}}

\newcommand{\odometer}{Z}
\newcommand{\orotation}{\rho}

\newcommand{\polyret}{R}

\newcommand{\FS}{\text{FS}}

\newcommand{\bpunch}{\beta}
\newcommand{\apunch}{\alpha}

\begin{document}

\title{Dynamically syndetic sets and the combinatorics of syndetic, idempotent filters}

\author[1]{Daniel Glasscock}
\author[2]{Anh N. Le}

\affil[1]{\small Dept. of Mathematics and Statistics, U. of Massachusetts Lowell, Lowell, MA, USA}
\affil[2]{\small Dept. of Mathematics,
	U. of Denver, Denver, CO, USA}

\date{}

\maketitle


\blfootnote{2020 \emph{Mathematics Subject Classification.}  Primary: 37B20. Secondary: 37B05.}

\blfootnote{\emph{Key words and phrases. }
return time sets,
syndetic sets,
thick sets,
piecewise syndetic sets,
central sets,
IP sets,
dynamically syndetic sets,
dynamically thick sets,
sets of pointwise recurrence,
topological recurrence,
measurable recurrence,
idempotent filters}

\begin{abstract}
A subset of the positive integers is \emph{dynamically central syndetic} if it contains the times that a point returns to a neighborhood of itself in a minimal topological dynamical system.  These sets are part of the highly-influential link between dynamics and combinatorics forged by Furstenberg and Weiss in the 1970's.  Our main result is a characterization of dynamically central syndetic sets as precisely those sets that belong to syndetic, idempotent filters.  This gives a ``global'' analogue to the well-known ``local'' characterization of Furstenberg's central sets as members of minimal, idempotent ultrafilters.  Applying the main result, we answer two open questions posed by Host, Kra, and Maass concerning sets of pointwise topological recurrence.
\end{abstract}

\setcounter{tocdepth}{2}
\tableofcontents

\section{Introduction}
\label{sec:intro}

A subset of the positive integers, $\N$, is called \emph{dynamically syndetic} if it contains a set of the form
\[
    R(x,U) \defeq \big\{ n \in \N \ \big| \ T^n x \in U \big\}, \quad x \in X, \quad U \subseteq X \text{ nonempty, open},
\]
where $(X,T)$ is a minimal topological dynamical system; see \cref{sec_top_dynamics}.
If $x \in U$, such a set is called \emph{dynamically central syndetic}.  Dynamically syndetic and dynamically central syndetic sets are the main objects of study in this paper.

Examples of dynamically syndetic sets abound in combinatorics and number theory.  Periodic sets and their almost-periodic and higher-order generalizations, namely Bohr and nil-Bohr sets, are dynamically syndetic.  The images of Beatty sequences and the set of those positive integers with even $2$-adic valuation are examples of dynamically central syndetic sets. We give further examples in \cref{sec_dsyndetic} and an instructive non-example at the beginning of \cref{sec_intro_char_of_ds_sets} below.

\subsection{Motivation: structure in dynamics and combinatorics}
\label{sec_part_one_intro}

Dynamically syndetic sets are structural objects that form part of the bridge between dynamics and combinatorics.  To motivate their study, we will describe two highly influential sets of results that exemplify this bridge.

Furstenberg and Weiss \cite{Furstenberg-ErgodicBehavior, Furstenberg_Weiss-topological-dynamics} pioneered the use of ergodic theory and topological dynamics as tools in Ramsey theory and combinatorial number theory.  A \emph{correspondence principle} -- a device used to create a dynamical object out of a combinatorial one -- makes available a wide range of tools and ideas from dynamics.
For example, a topological correspondence principle can be used to reduce van der Waerden's theorem -- ostensibly concerning arbitrary finite partitions of $\N$ in its original formulation \cite{van_der_waerden_1927} or arbitrary syndetic subsets of $\N$ in an equivalent formulation \cite{kakeya_morimoto_1930} -- to a statement concerning dynamically syndetic sets.
The first step in Furstenberg's highly influential proof of \Szemeredi{}'s theorem \cite{Furstenberg-ErgodicBehavior} is also of this nature, transferring the problem from combinatorics to one about measure preserving systems.  This set of ideas has had a huge impact on the field and continues to yield dividends \cite{ward_ergodic_theory_interactions_survey}.

Rotations on compact abelian groups are among the simplest examples of dynamical systems, and the dynamically syndetic sets they generate, \emph{Bohr sets}, are  among the simplest examples of dynamically syndetic sets.  Nilsystems -- algebraic generalizations of compact group rotations -- give rise to dynamically syndetic sets called \emph{nil-Bohr sets}.
Since groundbreaking work of Host and Kra \cite{host_kra_2005} and Ziegler \cite{ziegler_2007}, nilsystems and nil-Bohr sets have been used to describe the ``structured'' components in powerful decomposition theorems in ergodic theory \cite{Bergelson_Host_Kra05, frant15, leibman_2010}, additive combinatorics and number theory \cite{Green_Tao10, Green_Tao12, Green_Tao12a, Green_Tao_Ziegler12} and, recently, in topological dynamics \cite{glasner_huang_shao_weiss_ye_2020}.
Thus, dynamically syndetic sets arising from highly structured systems have become essential in defining structure, even in non-dynamical settings.

In their full generality, the families of dynamically (central) syndetic sets have appeared explicitly in the literature in a few places.  In various combinations, Dong, Huang, Shao, and Ye \cite{dong_shao_ye_2012,Huang_Shao_Ye_2019,huang_ye_2005} have studied these classes (they call them $m$-sets  and $sm$-sets) in the context of disjointness between topological dynamical systems.  The family of dynamically central syndetic sets was studied in a combinatorial context by Bergelson, Hindman, and Strauss \cite{bergelson_hindman_strauss_2012} in connection to the quality of sets of return times of points to open sets under polynomial iterates of an irrational rotation.  A subfamily of dynamically syndetic sets was considered by Kennedy, Raum, and Salomon \cite{kennedy_raum_salomon_2022}; we will discuss some of their results more carefully in the context of our own below.

\subsection{The main result: a characterization of dynamically syndetic sets}
\label{sec_intro_char_of_ds_sets}

A subset of $\N$ is \emph{syndetic} if it has bounded gaps, that is, if there exists $N \in \N$ such that it has nonempty intersection with every interval of length $N$.  Syndeticity is a generalization of periodicity that is intimately connected to minimality in topological dynamics.  That dynamically syndetic sets are syndetic, for example, is a consequence of a well-known characterization of minimal actions: the system $(X,T)$ is minimal if and only if every set of the form $R(x,U)$ is syndetic.

It is not the case, however, that all syndetic sets are dynamically syndetic.  Indeed, here is an instructive example to keep in mind:
\begin{align}
    \label{eqn_intro_even_odd}
    \bigg( 2 \N \cap \bigcup_{\substack{n = 0 \\ \text{$n$ even}}}^\infty \big[ 2^{n}, 2^{n + 1}\big) \bigg) \cup \bigg( \big( 2\N + 1 \big) \cap \bigcup_{\substack{n = 0 \\ \text{$n$ odd}}}^\infty \big[ 2^{n}, 2^{n + 1}\big) \bigg).
\end{align}
That this set is syndetic is obvious.  That this set is not dynamically syndetic is less obvious.  We leave the discovery of a short argument to the intrepid reader, opting instead to show this as an application of \cref{mainthm_ds_equivalents} below.

This example raises a question that has appeared explicitly and implicitly in a number of different contexts in the literature: how do we know if a given set is dynamically syndetic? The following theorem -- the main result of our paper proved in \cref{sec_char_of_ds_and_dcs} -- answers this question by giving necessary and sufficient conditions for a set to be dynamically syndetic.

\begin{Maintheorem}
\label{mainthm_ds_equivalents}
    Let $A \subseteq \N$.  The following are equivalent.
    \begin{enumerate}
        \item
        \label{item_intro_ds_def_condition}
        The set $A$ is dynamically syndetic.

        \item
        \label{item_intro_ds_combo_condition}
        There exists a nonempty subset $B \subseteq A$ that satisfies: for all finite $F \subseteq B$, the set $\bigcap_{f \in F} (B-f)$ is syndetic.

        \item
        \label{item_intro_translate_belongs_to_sif}
        There exists $n \in \N$ for which the set $A - n$ belongs to a syndetic, idempotent filter on $\N$.
    \end{enumerate}
\end{Maintheorem}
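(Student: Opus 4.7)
My plan is to prove the cycle (i) $\Rightarrow$ (ii) $\Rightarrow$ (iii) $\Rightarrow$ (i). The first two are direct constructions; the third is the main obstacle.

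For (i) $\Rightarrow$ (ii), I would take $B \defeq R_T(x, U) \subseteq A$ and observe that for any finite $F \subseteq B$, the set $\bigcap_{f \in F}(B - f)$ is exactly $R_T(x, W)$ with $W \defeq \bigcap_{f \in F} T^{-f}(U)$. Since each $f \in B$ satisfies $T^f x \in U$, we have $x \in W$, so $W$ is a nonempty open set; by minimality of $(X,T)$, the return-time set $R_T(x, W)$ is syndetic.

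For (ii) $\Rightarrow$ (iii), I would set $\mathscr{F}$ to be the filter generated by $\{\bigcap_{f \in F}(B - f) : F \subseteq B \text{ finite}\}$. It is a filter (because $\bigcap_{F_1} \cap \bigcap_{F_2} = \bigcap_{F_1 \cup F_2}$), every member is syndetic by (ii), and it is idempotent: if $C = \bigcap_{f \in F}(B - f)$ and $m \in C$, then $F + m \subseteq B$, so $C - m = \bigcap_{f \in F}(B - (f+m)) \in \mathscr{F}$, giving $\{m : C - m \in \mathscr{F}\} \supseteq C$. Finally, for any $f_0 \in B$, $A - f_0 \supseteq B - f_0 \in \mathscr{F}$, yielding (iii) with $n = f_0$.

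For (iii) $\Rightarrow$ (i), I would first reduce to (ii). Setting $C_0 \defeq A - n$ and $C_1 \defeq C_0 \cap \{m : C_0 - m \in \mathscr{F}\} \in \mathscr{F}$, a second application of idempotence (to each $C_0 - m$ for $m \in C_1$) shows $C_1 - m \in \mathscr{F}$, and so $B \defeq C_1 + n \subseteq A$ satisfies (ii). It then remains to prove (ii) $\Rightarrow$ (i), which is the technical heart. I would use the classical characterization of syndeticity -- a set $C$ is syndetic iff $\overline{C} \subseteq \beta\N$ meets every minimal left ideal -- together with a finite-intersection argument to show that the closed subsemigroup $\hat{\mathscr{F}} \defeq \{p \in \beta\N : p \supseteq \mathscr{F}\}$ meets every minimal left ideal of $\beta\N$. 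Picking an idempotent $p$ in $\hat{\mathscr{F}} \cap L$ for a minimal left ideal $L$ then gives a minimal idempotent of $\beta\N$ extending $\mathscr{F}$, so the $p$-limit $\omega^* \defeq p \cdot \mathbb{1}_B$ in the shift system $(\{0,1\}^\N, \sigma)$ is almost periodic with minimal orbit closure.

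The main obstacle is then extracting from this minimal orbit closure an actual return-time set contained in $A$. The $1$-set of $\omega^*$ equals $\{m : B - m \in p\}$, which contains $B$ but need not be inside $B$. Overcoming this requires refining $\omega^*$ using the star operation $B^\star \defeq \{m \in B : B - m \in p\}$, together with a careful choice of basic clopen neighborhood in the orbit closure so that the return-time set of the refinement falls into $B \subseteq A$. Syndeticity of $\mathscr{F}$ ensures $p$ is a genuine minimal idempotent (hence true minimality of the resulting subshift), while idempotence of $\mathscr{F}$ powers the star construction; neither property alone would suffice.
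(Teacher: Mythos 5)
Your directions (i) $\Rightarrow$ (ii), (ii) $\Rightarrow$ (iii), and the reduction of (iii) to a set satisfying (ii) are correct and essentially coincide with the paper's own route: the filter you generate from the sets $\bigcap_{f \in F}(B-f)$ is the one in \cref{thm_conditions_for_central_syndetic}, and your $C_1 = C_0 \cap \{m : C_0 - m \in \mathscr{F}\}$ is exactly the set $A \cap (A - \mathscr{F})$ used there. The gap is in the closing step (ii) $\Rightarrow$ (i). The $\beta\N$ argument you outline --- show $\overline{\mathscr{F}}$ meets every minimal left ideal, pick an idempotent $p \in \overline{\mathscr{F}} \cap L$, pass to $\omega^* = T^p 1_B$ --- is precisely \cref{lemma_syndetic_contained_in_minimal,thm_sifs_are_contained_in_idempotents,cor_central_syndetic_is_strongly_central}, and what it delivers is that $B$ is \emph{strongly central}. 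It does not deliver (i): the support of $\omega^*$ is $B - p = \{m : B - m \in p\}$, so every return-time set of the uniformly recurrent point $\omega^*$ to a clopen neighborhood inside $[1]_0$ lies in $B - p$, not in $B$; and since your construction arranges $B - b \in \mathscr{F} \subseteq p$ for every $b \in B$, you have $B \subseteq B - p$, so your star set $B^\star = \{m \in B : B - m \in p\}$ is just $B$ again and refines nothing. There is no choice of basic clopen neighborhood that forces the return times of $\omega^*$ (or of any point manufactured from $p$-limits of $1_B$) back inside $B$; the ``careful choice of neighborhood'' is exactly the missing content, not a detail.

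Moreover this cannot be repaired at the level of generality you are working at. The only facts your final step uses are: $p$ is a minimal idempotent, $B - b \in p$ for all $b \in B$ (and possibly $B \in p$). These facts are satisfied by sets that are not even syndetic --- take $A^\star = \{n \in A : A - n \in p\}$ for a non-syndetic central set $A \in p$ --- so no argument using only them can prove dynamical syndeticity. Equivalently, strongly central does not imply very strongly central (see \cite[Thm. 2.17]{bergelson_hindman_strauss_2012}, and by \cref{thm:simple_equivalent_dcS} ``very strongly central'' is the same as dynamically central syndetic). The full hypothesis --- syndeticity of \emph{all} the iterated intersections, upgraded to intersections with $m\N$ as in \cref{lemma_upgraded_cssd_property} --- must enter quantitatively, and this is what the paper's shift-punch system over the $2$-adic odometer (\cref{sec_char_of_dy_central_synd_sets}) is for: it surgically deletes elements of $A$ to produce an honest subset $B' \subseteq A$ with $1_{B' \cup \{0\}}$ uniformly recurrent under the genuine shift, which the ultrafilter/star machinery alone cannot produce.
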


Point \eqref{item_intro_ds_combo_condition} characterizes dynamical syndeticity in terms of a simple combinatorial property; we will discuss \eqref{item_intro_translate_belongs_to_sif} and define ``syndetic, idempotent filter'' in the next subsection. Let us use \eqref{item_intro_ds_combo_condition} to see that the set in \eqref{eqn_intro_even_odd}, call it $A$, is not dynamically syndetic.  If $B \subseteq A$ satisfies the condition in \eqref{item_intro_ds_combo_condition}, then $B$ must be syndetic.  Every syndetic subset of $A$, in particular, $B$, must contain both an even integer, $b_0$, and an odd integer, $b_1$.  It is quick to see, however, that the set $(A - b_0) \cap (A-b_1)$ is not syndetic, contradicting what we had supposed about $B$.  In addition to giving a way to check whether a set is dynamically syndetic, the combinatorial condition in \eqref{item_intro_ds_combo_condition} allows us to characterize members of the dual class, the class of dynamically thick sets, a study of which appears in a companion paper \cite{glasscock_le_2025}.

It is worth comparing point \eqref{item_intro_ds_combo_condition} in \cref{mainthm_ds_equivalents} with a recent result of Kennedy, Raum, and Salomon \cite[Thm. 8.2]{kennedy_raum_salomon_2022}. In the context of the positive integers, it is a consequence of their result that a set $A \subseteq \N$ is dynamically syndetic if and only if there exists $B \subseteq A$ such that for all finite $F_1 \subseteq B$ and $F_2 \subseteq \N \setminus B$, the set 
\begin{align}\
\label{eq:new_B'}
    \bigcap_{f_1 \in F_1} \big(B - f_1\big)  \cap  \bigcap_{f_2 \in F_2} \big((\N \setminus B) - f_2\big)
\end{align}
is syndetic.  It is not hard to see that a set $B \subseteq \N$ satisfies the condition in \eqref{eq:new_B'} if and only if its indicator function $1_B$ is a uniformly recurrent point in the full shift $(\{0,1\}^{\N}, \sigma)$; see \cref{sec_top_dynamics} for the notation and terminology.  That a set is dynamically syndetic if and only if it contains such a subset follows then from, eg., \cite[Prop. 2.3]{huang_ye_2005} (see also \cref{thm:simple_equivalent_dS}).  A main novelty of \cref{mainthm_ds_equivalents} is that the condition in \eqref{item_intro_ds_combo_condition} is substantially weaker than the one in \eqref{eq:new_B'}, in the sense that nothing is required of the complement $\N \setminus B$.
In particular, the condition in \eqref{item_intro_ds_combo_condition} does not imply that the point $1_B$ is uniformly recurrent.  This weaker condition translates to an easier-to-verify sufficient condition for dynamical syndeticity.

The following is an analogous characterization for dynamically central syndetic sets. \cref{intro_mainthm_full_chars_of_dcsyndetic} is proved in \cref{sec_char_of_ds_and_dcs}.

\begin{Maintheorem}
    \label{intro_mainthm_full_chars_of_dcsyndetic}
    Let $A \subseteq \N$.  The following are equivalent.
    \begin{enumerate}
        \item \label{intro_item_def_of_dcs}
        The set $A$ is dynamically central syndetic.

        \item \label{intro_item_dcs_combo_condition}
        There exists a subset $B \subseteq A$ that satisfies: for all finite $F \subseteq B$, the set $B \cap \bigcap_{f \in F} (B-f)$ is syndetic.

        \item \label{intro_item_member_of_sif}
        The set $A$ belongs to a syndetic, idempotent filter on $\N$.
    \end{enumerate}
\end{Maintheorem}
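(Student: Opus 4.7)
My plan is to prove the equivalences cyclically: (i) $\Rightarrow$ (ii) $\Rightarrow$ (iii) $\Rightarrow$ (i). For (i) $\Rightarrow$ (ii), I set $B \defeq R_T(x, U)$ using the witnessing minimal data. For any finite $F \subseteq B$, each $f \in F$ gives $T^f x \in U$, so $x \in V \defeq U \cap \bigcap_{f \in F} T^{-f} U$, an open neighborhood of $x$ in $X$. By minimality, $R_T(x, V) = B \cap \bigcap_{f \in F}(B - f)$ is syndetic.

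For (ii) $\Rightarrow$ (iii), I consider the family $\mathcal{B} \defeq \{B_F : F \subseteq B \text{ finite}\}$ where $B_F \defeq B \cap \bigcap_{f \in F}(B - f)$. Since $B_F \cap B_{F'} = B_{F \cup F'}$, the family $\mathcal{B}$ is a filter base; let $\filter$ be the filter it generates. Every member of $\filter$ contains some $B_F$, which is syndetic by hypothesis, so $\filter$ is syndetic. To verify idempotency, fix $B_F \in \mathcal{B}$ and $n \in B_F$; then $F' \defeq F \cup \{n\} \cup \{n + f : f \in F\}$ is a finite subset of $B$, and a direct expansion yields $B_{F'} = B_F \cap (B_F - n) \subseteq B_F - n$, so $B_F - n \in \filter$. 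Hence $B_F \subseteq \{n : B_F - n \in \filter\}$, which therefore lies in $\filter$. Finally, $A \supseteq B = B_\emptyset \in \filter$, so $A \in \filter$.

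The main work is (iii) $\Rightarrow$ (i). Given a syndetic idempotent filter $\filter$ containing $A$, I pass to the Stone--\Cech{} compactification and set $C_\filter \defeq \bigcap_{B \in \filter} \overline{B} \subseteq \beta\N$. The idempotency of $\filter$ makes $C_\filter$ a closed subsemigroup of $(\beta\N, +)$: for $p, q \in C_\filter$ and $B \in \filter$, the set $\{n : B - n \in \filter\} \in \filter$ lies in $p$ and is contained in $\{n : B - n \in q\}$, forcing $B \in p + q$. Combining the syndeticity of $\filter$ with the standard fact that a subset of $\N$ is syndetic iff its closure in $\beta\N$ meets every minimal left ideal, the finite intersection property yields $L \cap C_\filter \neq \emptyset$ for every minimal left ideal $L$ of $\beta\N$. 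This intersection is a closed left ideal of $C_\filter$, so contains a minimal left ideal of $C_\filter$, which by Ellis--Numakura contains an idempotent $p$. Since $p \in L \subseteq K(\beta\N)$, $p$ is a minimal idempotent of $\beta\N$, so $A \in p$ is central; the classical dynamical characterization of central sets then supplies a uniformly recurrent point $x$ in a compact metric system $(X, T)$ and an open $U \ni x$ with $R_T(x, U) \subseteq A$. Restricting to the minimal orbit closure of $x$ exhibits $A$ as dynamically central syndetic.

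The principal obstacle is this last step, in which both defining properties of $\filter$ are deployed in concert: idempotency endows $C_\filter$ with semigroup structure, and syndeticity forces $C_\filter$ to meet $K(\beta\N)$ so that the idempotent produced by Ellis--Numakura is actually minimal in $\beta\N$. Without this latter point one would extract only a uniformly recurrent point in some system rather than a point in a \emph{minimal} one, which is what distinguishes dynamically central syndetic sets from the (a priori equal) class of central sets.
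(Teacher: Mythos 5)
Your implications (i) $\Rightarrow$ (ii) and (ii) $\Rightarrow$ (iii) are correct and essentially coincide with the paper's arguments (the filter generated by the sets $B_F$ is exactly the filter in \cref{thm_conditions_for_central_syndetic}~\eqref{item_comb_condition_implies_central_syndetic}, and your idempotency computation is the same). The problem is the step (iii) $\Rightarrow$ (i), which is precisely where the real difficulty of the theorem lives, and your argument for it does not close. What your $\beta\N$ argument actually yields is that the closed subsemigroup $C_\filter$ meets every minimal left ideal in an idempotent containing $A$; this is the paper's \cref{cor_central_syndetic_is_strongly_central}, i.e.\ $A$ is \emph{strongly central} in the sense of \cite{bergelson_hindman_strauss_2012}, and in particular central. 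But centrality (even strong centrality) is strictly weaker than being dynamically central syndetic. The ``classical dynamical characterization of central sets'' does not supply what you claim: a central set has the form $R(y,U)$ where $y$ is proximal to a uniformly recurrent point $x$ and $U$ is a neighborhood of $x$ --- the orbit of $y$ need not lie in a minimal system, $y$ need not lie in $U$, and one cannot replace $y$ by $x$. Concretely, a thick set that is not syndetic is central (it contains $R(y,U)$ of this form and belongs to minimal idempotents) yet cannot be dynamically central syndetic, since members of $\dcsyndetic$ are syndetic; so ``$A$ belongs to a minimal idempotent ultrafilter'' cannot imply (i). Your closing remark that minimality of the idempotent ``distinguishes dynamically central syndetic sets from central sets'' conflates the two notions: minimal idempotent ultrafilters characterize $\central$, not $\dcsyndetic$, and even the stronger conclusion you reach (an idempotent in \emph{every} minimal left ideal) is known to be insufficient --- \cite[Thm. 2.17]{bergelson_hindman_strauss_2012} exhibits a strongly central set that is not very strongly central, hence, by \cref{thm:simple_equivalent_dcS}, not dynamically central syndetic.

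The paper's proof of (iii) $\Rightarrow$ (i) cannot be replaced by this ultrafilter argument; it instead works directly with the combinatorial condition \eqref{eqn_def_of_cssd} (upgraded via \cref{lemma_upgraded_cssd_property}) and builds the ``shift-punch'' skew product over the 2-adic odometer (\cref{sec_char_of_dy_central_synd_sets}) to manufacture a subset $B \subseteq A$ whose indicator $1_{B \cup \{0\}}$ is uniformly recurrent in the full shift; \cref{thm:simple_equivalent_dcS} then gives $R(1_{B\cup\{0\}}, X \cap [1]_0) = B \subseteq A$ with the base point inside the open set of a genuinely minimal system. That construction is the content your proposal is missing.
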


Straightforward combinatorial arguments give that (\ref{intro_item_dcs_combo_condition}) and (\ref{intro_item_member_of_sif}) are equivalent in Theorems \ref{mainthm_ds_equivalents} and \ref{intro_mainthm_full_chars_of_dcsyndetic}.
The novelty and difficulty of these theorems lie in the equivalence between (\ref{intro_item_def_of_dcs}) and (\ref{intro_item_member_of_sif}), a characterization dynamical syndeticity in terms of membership in a combinatorial object called a syndetic, idempotent filter.  This connection is, in fact, behind most of the other results in this paper and the companion paper \cite{glasscock_le_2025}.  We turn our attention toward these objects now.

\subsubsection{A new tool: the algebra of idempotent families}
\label{sec_intro_algebra_of_filters}

A \emph{family} is an upward-closed collection of sets.  A \emph{filter} is a family $\family$ satisfying: for all members $A_1$ and $A_2$ of $\family$, the set $A_1 \cap A_2$ belongs to $\family$.  Filters are combinatorial objects membership in which formalizes what it means for a set to be ``large'' in some way. The collection of all neighborhoods of a given point in a topological space is the quintessential example of a filter in topology.  We will see that an analogue of this filter for minimal topological dynamical systems is inextricably linked to the notion of dynamical syndeticity.

Let us briefly narrow the discussion from filters to ultrafilters.  \emph{Ultrafilters on $\N$} -- maximal filters on $\N$, whose existence is guaranteed by Zorn's lemma -- are combinatorial objects with a deep and important history in the study of topological dynamics, especially in applications to combinatorial number theory \cite{bergelson_2010}. The set $\beta \N$ of ultrafilters -- identifiable with the Stone-\Cech{} compactification of $\N$ -- is a multifaceted topological and algebraic object, the structure of which is behind many of the aforementioned applications.  For the purposes of this introduction, it suffices to know that $\beta \N$ is a compact Hausdorff space, and that there is a natural way to lift addition from $\N$ to $\beta \N$, turning $\beta \N$ into a semigroup: if $\family$ and $\familytwo$ are ultrafilters, then
\begin{align}
\label{eqn_sum_of_families_intro}
    \family + \familytwo \defeq \Big\{ A \subseteq \N \ \Big| \ \big\{ n \in \N \ \big| \ A - n \in \familytwo \big\} \in \family \Big\}
\end{align}
is an ultrafilter.  The repeated addition of the principal ultrafilter $\{A \subseteq \N \ | \ 1 \in A\}$ on the left turns $\beta \N$ into a (non-metrizable) topological dynamical system.

An ultrafilter $\family$ is called \emph{idempotent} if $\family = \family + \family$.  Idempotency of ultrafilters is closely related to proximality in topological dynamics \cite[Sec. 19.3]{hindman_strauss_book_2012} and to finite sums sets in additive combinatorics \cite[Sec. 5.2]{hindman_strauss_book_2012}.  An ultrafilter is called \emph{minimal} if it is \emph{uniformly recurrent} (see \cref{sec_top_dynamics}) under the dynamics just described.  Minimality of ultrafilters is closely related to uniform recurrence \cite[Sec. 19.3]{hindman_strauss_book_2012} and piecewise syndeticity \cite[Sec. 4.4]{hindman_strauss_book_2012}.  At the intersection of all of these ideas are Furstenberg's \emph{central sets} \cite[Def. 8.3]{furstenberg_book_1981}, combinatorially-rich, dynamically-defined subsets of $\N$ introduced to aid in the topological dynamical approach to problems in Ramsey theory. It is a result of Bergelson, Hindman, and Weiss \cite{bergelson_hindman_1990} that a subset of $\N$ is central if and only if it belongs to a minimal, idempotent ultrafilter on $\N$.

There is no obstruction to extending the familiar definition of sums of ultrafilters in \eqref{eqn_sum_of_families_intro} to arbitrary families: if $\family$ and $\familytwo$ are families, the sum $\family + \familytwo$ defined in \eqref{eqn_sum_of_families_intro} is again a family.  But while there are books written about the algebra of ultrafilters (we cite, eg., \cite{hindman_strauss_book_2012} frequently in this work), the algebra of families appears to be much less explored. For filters, the sum in \eqref{eqn_sum_of_families_intro} appears first, to our knowledge, in Berglund and Hindman \cite{berglund_hindman_1984}. We define a family $\family$ to be \emph{idempotent} if $\family \subseteq \family + \family$.  (See \cref{sec_trans_inv_and_idempotent_families} for a discussion on containment versus equality.) This definition generalizes the notion of ``idempotent filter'' that appears implicitly in Papazyan \cite{papazyan_1989} and explicitly in Krautzberger \cite{Krautzberger_2009}.

In the setting of topological dynamics, the canonical example of an idempotent filter is one of the form
\begin{align}
\label{eqn_canonical_sif}
    \family \defeq \upclose \big\{ R(x,U) \ \big| \ U \subseteq X \text{ is open and contains $x$} \big\},
\end{align}
where $x$ is a point in a dynamical system $(X,T)$ and the upward arrow indicates the upward closure of the collection of sets.  That $\family$ is a filter is clear.  To see that this is an idempotent filter, one needs only to show and interpret the following: for all open $U \subseteq X$ containing $x$,
\begin{align}
\label{eqn_inequality_to_show_idempotency}
    R(x,U) \subseteq \big\{n \in \N \ \big| \ R(x,U) - n \in \family\big\}.
\end{align}
If $n \in R(x,U)$, then $x \in T^{-n}U$, whereby $R(x,U) - n = R(x,T^{-n}U)$ is a member of $\family$.  Since $R(x,U) \in \family$, we see from the inclusion that $\{n \in \N \ | \ R(x,U) - n \in \family\} \in \family$, demonstrating that $R(x,U) \in \family + \family$.  Since every member of $\family$ contains a set of the form $R(x,U)$, this shows that $\family \subseteq \family + \family$. 

A \emph{syndetic filter} is a filter whose members are all syndetic.  If $(X,T)$ is a minimal system, the filter in \eqref{eqn_canonical_sif} is an example of a syndetic, idempotent filter.  Thus, we see that every dynamically central syndetic set belongs to a syndetic, idempotent filter. \cref{intro_mainthm_full_chars_of_dcsyndetic} gives the converse: every member of a syndetic, idempotent filter is dynamically central syndetic.  If ultrafilters are ``local'' combinatorial objects, then syndetic filters are ``global'' combinatorial objects.  One way to contextualize \cref{intro_mainthm_full_chars_of_dcsyndetic}, then, is as a global analogue to the local characterization of central sets mentioned above.

There are several other ``large,'' idempotent filters that arise naturally in the subject. Very general families of sets of return times in topological dynamics and ergodic theory, for example, are IP$^*$, idempotent filters (see \cref{sec_families_of_sets}). Our demonstration of this in a concrete example in \cref{lemma_localization_is_syndetic_idempotent_filter} is behind the proof of \cref{thm:dcps_multiple_recurrence} below.

\subsubsection{A peek into the proofs of Theorems \ref{mainthm_ds_equivalents} and \ref{intro_mainthm_full_chars_of_dcsyndetic}}
\label{sec_intro_into_the_proof}

Let us briefly discuss the proofs of Theorems \ref{mainthm_ds_equivalents} and \ref{intro_mainthm_full_chars_of_dcsyndetic}.  The full proofs, finished in \cref{sec_char_of_ds_and_dcs}, occupy a substantial portion of this paper. Since any dynamically syndetic set can be translated to become dynamically central syndetic (\cref{lemma_translates_of_dsyndetic_sets}), \cref{mainthm_ds_equivalents} follows relatively easily from \cref{intro_mainthm_full_chars_of_dcsyndetic}.

That the combinatorial condition in \cref{intro_mainthm_full_chars_of_dcsyndetic} \eqref{intro_item_dcs_combo_condition} characterizes membership in a syndetic, idempotent filter, \eqref{intro_item_member_of_sif}, is proved in \cref{sec_combo_condition_for_central_syndetic}.  The proof is purely combinatorial and highlights some of the family algebra discussed in the previous section.  Readers familiar with the characterization of IP sets (see \cref{sec_families_of_sets}) as members of idempotent ultrafilters will immediately recognize the ideas at play.

That dynamically central syndetic sets belong to a syndetic, idempotent filter (that \eqref{intro_item_def_of_dcs} implies \eqref{intro_item_member_of_sif}) is simple and was discussed in the previous section. The challenge lies in the converse, whose proof occupies most of \cref{sec_central_and_dy_central_sets}. Since this is the most technical part of the paper, let us briefly describe the main idea.  A more detailed outline is given in \cref{sec_main_idea_of_shift_punch}.

We begin with a set $A \subseteq \N$ that satisfies the combinatorial condition in \eqref{intro_item_dcs_combo_condition}.  Were the point $1_{A \cup \{0\}}$ uniformly recurrent in the full symbolic shift $(\{0,1\}^{\N \cup \{0\}},\sigma)$, it would follow from \cref{thm:simple_equivalent_dcS} that $A$ is dynamically central syndetic.  It is not true in general, however, that $1_{A \cup \{0\}}$ is uniformly recurrent.  The main idea is to modify the symbolic shift by composing it with a ``punch'' map to form a related system that we call ``shift-punch.''  Under the shift-punch dynamics, the point $1_{A \cup \{0\}}$ is uniformly recurrent.  It is this uniform recurrence, then, that allows us to find a subset of $A$ that is dynamically central syndetic.

\subsection{Applications to sets of pointwise recurrence}

A set $A \subseteq \N$ is a \emph{set of (minimal, topological) pointwise recurrence} if for all minimal systems $(X,T)$, all points $x \in X$, and all open $U \subseteq X$ containing $x$, there exists $n \in A$ such that $T^n x \in U$.
The family of sets of pointwise recurrence is dual to the family of dynamically central syndetic sets, in the sense described in \cref{sec_dual_families}.
As an application of our main results, we answer two questions of Host, Kra, and Maass concerning sets of pointwise recurrence which we now describe.

\subsubsection{Sets of pointwise recurrence are sets of polynomial recurrence}

A set $A \subseteq \N$ is called a \emph{set of multiple topological recurrence} if for all $k \in \N$, all minimal systems $(X, T)$, and all nonempty, open sets $U \subseteq X$, there exists $n \in A$ such that
\[
    U \cap T^{-n} U \cap \cdots \cap T^{-kn} U \neq \emptyset.
\]
The set $A$ is called a \emph{set of (single) topological recurrence} if it satisfies the condition above for $k=1$.
Since Furstenberg and Weiss \cite{Furstenberg_Weiss-topological-dynamics} proved that $\N$ is a set of multiple topological recurrence, the problem of determining necessary and sufficient conditions for a subset of $\N$ to be such has attracted much attention.
It is immediate from the definitions that a set of pointwise recurrence is a set of single topological recurrence.
That there are sets of single topological recurrence that are not sets of multiple topological recurrence was shown by Furstenberg \cite{furstenberg_book_1981} (see also \cite{frantzikinakis_lesigne_wierdl_2006}).
This is the context for the following question.

\begin{question}[{Host-Kra-Maass \cite[Question 2.14]{HostKraMaass2016}}]
\label{quest_hkm_2_14}
    Is a set of pointwise recurrence a set of multiple topological recurrence?
\end{question}

Our next theorem gives a strong positive answer to \cref{quest_hkm_2_14}: sets of pointwise recurrence are sets of polynomial multiple measurable recurrence for commuting transformations.  A set $A \subseteq \N$ is a \emph{set of polynomial multiple measurable recurrence for commuting transformations} if for all $k, \ell \in \N$, all commuting measure preserving systems $(X, \mu, T_1, \ldots, T_k)$ (see \cref{sec_dcps_and_set_recurrence} for the precise definition), all $E \subseteq X$ with $\mu(E) > 0$, and all polynomials $p_{i,j} \in \Q[x]$ with $p_{i, j}(0) = 0$ and $p_{i, j}(\N_0) \subseteq \N_0$, $1 \leq i \leq k$, $1 \leq j \leq \ell$, there exists $n \in A$ such that
\[
    \mu\Big( \big(T_1^{p_{1,1}(n)} T_2^{p_{2,1}(n)} \cdots T_k^{p_{k,1}(n)} \big)^{-1} E \cap \cdots \cap \big(T_1^{p_{1,\ell}(n)} T_2^{p_{2,\ell}(n)} \cdots T_k^{p_{k,\ell}(n)} \big)^{-1} E \Big) > 0.
\]

Because any minimal topological dynamical system admits an invariant measure of full support, sets of polynomial multiple measurable recurrence for commuting transformations are sets of multiple topological recurrence.

\begin{Maintheorem}
\label{thm:dcps_multiple_recurrence}
    Sets of pointwise recurrence are sets of polynomial multiple measurable recurrence for commuting transformations.
\end{Maintheorem}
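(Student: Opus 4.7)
The plan is to show that any dynamically central piecewise syndetic set $A = B \cap C$ meets the return-time set
$$R \defeq \Big\{ n \in \N \ \Big| \ \mu\big(\big(T_1^{p_{1,1}(n)} \cdots T_k^{p_{k,1}(n)}\big)^{-1} E \cap \cdots \cap \big(T_1^{p_{1,\ell}(n)} \cdots T_k^{p_{k,\ell}(n)}\big)^{-1} E\big) > 0 \Big\}$$
attached to an arbitrary commuting polynomial configuration. I would carry this out in five steps, combining the combinatorial characterization of dynamically central syndetic sets with the filter algebra developed in Part I of the paper.

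First, by \cref{intro_mainthm_full_chars_of_dcsyndetic}, the dynamically central syndetic set $B$ lies in some syndetic, idempotent filter $\filter$. Second, by the Bergelson--Leibman polynomial Szemer\'edi theorem for commuting transformations, $R$ is $\IP^*$; more precisely, by \cref{lemma_localization_is_syndetic_idempotent_filter}, it belongs to an $\IP^*$, idempotent filter $\filterthree$.

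Third---the crux---I would form the join $\filtertwo$ generated by $\filter \cup \filterthree$ (whose members are the supersets of $F \cap H$ for some $F \in \filter$ and $H \in \filterthree$) and verify that $\filtertwo$ is itself a syndetic, idempotent filter. Properness follows because every $F \in \filter$ contains an IP set (a standard Hindman-style construction from idempotency of $\filter$) and every $H \in \filterthree$ is $\IP^*$. Idempotency is a short direct computation: for $G \supseteq F \cap H$, idempotency of $\filter$ and of $\filterthree$ give $\{n : F-n \in \filter\} \in \filter$ and $\{n : H-n \in \filterthree\} \in \filterthree$, whose intersection lies in $\filtertwo$ and sits inside $\{n : G-n \in \filtertwo\}$. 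The more delicate point is that every member of $\filtertwo$ is syndetic; this I would establish via the abstract filter algebra of the paper, e.g.\ \cref{prop:general_abstract_intersection_filter_nonsense} together with the framework of \cref{sec_filter_algebra_for_central_duals}, which is designed precisely to handle how $\IP^*$ idempotent filters interact with syndetic idempotent filters.

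Fourth, since $B \cap R \in \filtertwo$, the converse direction of \cref{intro_mainthm_full_chars_of_dcsyndetic} shows that $B \cap R$ is dynamically central syndetic; hence $B \cap R \supseteq R_{T_0}(x, U)$ for some minimal system $(X_0, T_0)$, some $x \in X_0$, and some open $U \ni x$. Fifth, because $C$ is a set of pointwise recurrence, there exists $n \in C$ with $T_0^n x \in U$, so $n \in R_{T_0}(x, U) \subseteq B \cap R$, whence $n \in A \cap R$, as desired. The main obstacle is the syndeticity claim in step three; it is exactly the type of question that the algebra of syndetic and $\IP^*$ idempotent filters discussed in \cref{sec_intro_algebra_of_filters} is designed to address.
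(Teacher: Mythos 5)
Your argument is essentially the paper's own: the three ingredients you use --- \cref{intro_mainthm_full_chars_of_dcsyndetic} (equivalently \cref{thm_dcs_iff_cs}), \cref{lemma_localization_is_syndetic_idempotent_filter}, and \cref{prop:general_abstract_intersection_filter_nonsense} --- are exactly those the paper combines, and your steps four and five simply unpack what the paper phrases as the containment of the return-time filter in $\dcPS^*$ via \cref{lemma_class_cap_is_pr} and duality. Since every $\IP^*$ set is $\central^*$, the ``delicate point'' in your step three is literally the statement of \cref{prop:general_abstract_intersection_filter_nonsense}, so no new argument is required there; your separate properness remark (members of a proper idempotent filter are $\IP$ sets, which meet $\IP^*$ sets) is correct but subsumed by it.

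The one genuine omission is the passage from invertible to general commuting transformations. \cref{lemma_localization_is_syndetic_idempotent_filter} is stated for \emph{invertible} systems, and its idempotency proof needs that: the shifted polynomials $q_{i,j}(n) = p_{i,j}(n+m) - p_{i,j}(m)$ can take negative values even when $p_{i,j}(\N_0) \subseteq \N_0$, so $T_i^{q_{i,j}(n)}$ need not be defined without invertibility. \cref{thm:dcps_multiple_recurrence}, however, concerns arbitrary commuting measure preserving transformations, so your argument as written proves the invertible case (the paper's \cref{thm_dcps_is_poly_mult_comm_rec}); the paper completes the proof by passing to the measurable natural extension, which preserves the measures of the intersections in \eqref{eqn_poly_return_set} and hence the return-time sets. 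Adding that reduction closes the gap. (A minor point: the $\IP^*$ input is the IP polynomial \Szemeredi{} theorem of Bergelson and McCutcheon, not Bergelson--Leibman, which does not yield the $\IP^*$ conclusion your step two needs; since you invoke \cref{lemma_localization_is_syndetic_idempotent_filter} anyway, this is only a citation slip.)
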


The proof of \cref{thm:dcps_multiple_recurrence} in \cref{sec_dcps_and_set_recurrence} appeals to the IP polynomial \Szemeredi{} theorem of Bergelson and McCutcheon \cite{Bergelson-McCutcheon-IPpolynomialSzemeredi} to show that the family of times of polynomial returns in ergodic theory is an $\IP^*$ (and, hence, syndetic), idempotent filter. It follows then immediately from \cref{intro_mainthm_full_chars_of_dcsyndetic} that sets of pointwise recurrence have nonempty intersection with every member of this filter, proving \cref{thm:dcps_multiple_recurrence}.

Combining \cref{thm:dcps_multiple_recurrence} with the fact that sets of pointwise recurrence are piecewise syndetic (\cref{ex_ps_star_is_dcs}), we show in  \cref{thm:Brauer_and_dcPS} that every set of pointwise recurrence contains Brauer-type polynomial configurations.  The full extent of the combinatorial richness of sets of pointwise recurrence remains an interesting open question.

\subsubsection{The family of sets of pointwise recurrence is not partition regular}
\label{sec_intro_pwrec_not_pr}

It is a consequence of \cref{lemma_localization_is_syndetic_idempotent_filter} -- a result on the way toward proving \cref{thm:dcps_multiple_recurrence} -- that the family of sets of polynomial multiple measurable recurrence for commuting transformations is partition regular: if $A$ is such a set and $A = B \cup C$, then at least one of $B$ or $C$ is such.  For sets of single and multiple topological recurrence and their measurable counterparts, this is a well known and often-exploited fact; see, eg., \cite[Proposition 6.2]{HostKraMaass2016} and \cite[Lemma 3.3]{Furstenberg-poincare_recurrence}.
In this context, it is natural to ask whether the family of sets of pointwise recurrence is also partition regular.

\begin{question}[{Host-Kra-Maass \cite[Question 6.5]{HostKraMaass2016}}]
\label{quest_is_pw_rec_partition_reg}
    Is the family of sets of pointwise recurrence partition regular?
\end{question}

We give a strong negative answer to \cref{quest_is_pw_rec_partition_reg}, showing that even $\N$ can be partitioned into two sets, neither of which is a set of pointwise recurrence. This result demonstrates a sharp contrast between the nature of topological and measurable set recurrence and pointwise recurrence.

\begin{Maintheorem}
\label{mainthm_partition_of_dcs_sets}
    Every dynamically central syndetic set can be partitioned into infinitely many, pairwise disjoint dynamically central syndetic sets. As a result, every set of pointwise recurrence -- in particular, the set of positive integers $\N$ -- can be partitioned into two sets, neither of which is a set of pointwise recurrence.
\end{Maintheorem}

The proof of \cref{mainthm_partition_of_dcs_sets}, given in \cref{sec_partitioning_dcs_sets}, hinges on the symbolic characterization of dynamically syndetic sets given in \cref{thm:simple_equivalent_dcS}.  This characterization is, in turn, one of the essential ingredients in the proof of \cref{intro_mainthm_full_chars_of_dcsyndetic}.

\subsection{A companion paper on sets of pointwise recurrence and dynamical thickness}

Dual to the families of dynamically syndetic and dynamically central syndetic sets are the families of dynamically thick sets and sets of pointwise recurrence.  We study these families -- and the families obtained by intersecting dynamically syndetic and dynamically thick sets -- in depth in a companion paper \cite{glasscock_le_2025}.  The main results in that paper rely critically on the main results here, Theorems \ref{mainthm_ds_equivalents} and \ref{intro_mainthm_full_chars_of_dcsyndetic}.  In contrast, although we make frequent reference to that paper, the present paper is logically independent and does not rely on any of the results from \cite{glasscock_le_2025}.

\subsection{Organization of the paper}

Notation, terminology, and preliminaries are given in \cref{sec_prelims}.  We give in \cref{sec_dsyndetic} some first results on dynamically syndetic sets, then characterize in \cref{sec_central_and_dy_central_sets} dynamically central syndetic sets as members of syndetic, idempotent filters.  Theorems \ref{mainthm_ds_equivalents}, \ref{intro_mainthm_full_chars_of_dcsyndetic}, \ref{thm:dcps_multiple_recurrence}, and \ref{mainthm_partition_of_dcs_sets} are proven in \cref{sec_dps_sets}, and we conclude with some open problems in \cref{sec:open_questions}.

\subsection{Acknowledgments}

Thanks goes to Andy Zucker and Josh Frisch for helpful discussions at the 2024 Southeastern Logic Symposium Conference and for the references they provided.  Thanks also goes to Neil Hindman for correspondence regarding translates of central sets.  The authors are indebted to Eli Glasner, John Johnson, and Mauro Di Nasso for their assistance in correcting and improving several of the historical references and claims made in a first draft of the paper. Last but not least, thanks go to Andreas Koutsogiannis, Joel Moreira, Florian Richter, and Donald Robertson, with whom the authors were working when a number of the ideas in this paper sharpened.

\section{Preliminaries}
\label{sec_prelims}

The set of integers, non-negative integers, and positive integers are denoted by $\Z$, $\N_0$, and $\N$, respectively.  The power set of $\N$ is denoted $\mathcal{P}(\N)$.  For $N \in \N_0$, we define $[N]$ to be $\{0, 1, \ldots, N - 1\}$.

For $A \subseteq \N$ and $n \in \N$, we define
\begin{align*}
    A+n &\defeq \{a + n \ | \ a \in A \}, & nA & \defeq \{ na \ | \ a \in A\}, \\
    A-n &\defeq \{m \in \N \ | \ m + n \in A \}, & A/n & \defeq \{ m \in \N \ | \ mn \in A\}.
\end{align*}
For $A, B \subseteq \N$, we define
\begin{align*}
    A+B &\defeq \bigcup_{b \in B} (A + b), & A-B &\defeq \bigcup_{b \in B} (A-b).
\end{align*}
We follow the convention that the empty union of subsets of $\N$ is empty while the empty intersection of subsets of $\N$ is equal to $\N$. Note that all set operations on subsets of $\N$ result in subsets of $\N$.

\subsection{Families of positive integers}
\label{sec_combinatorics}

Much of the algebra described in this subsection goes back to work of Choquet \cite{choquet_1947} and Schmidt \cite{schmidt_1952,schmidt_1953}.  A good modern reference for this material can be found in \cite[Section 2]{christopherson_johnson_2022}.  Many of the results here are generalized to arbitrary semigroups in \cite{Burns_Davenport_Frankson_Griffin_Johnson_Kebe_syndetic}.

Let $\class$ be a collection of subsets of $\N$.  The \emph{upward closure} of $\class$ is
\[\upclose \class \defeq \big\{ B \subseteq \N \ \big| \ \exists A \in \class, \ B \supseteq A \big\}.\]
The set $\class$ is \emph{upward-closed} if $\class = \upclose \class$.  We will call an upward-closed collection of subsets of $\N$ a \emph{family}.  Note that both $\emptyset$ and $\mathcal{P}(\N)$ are families.  A family is \emph{proper} if it is neither $\emptyset$ nor $\mathcal{P}(\N)$, that is, it is both nonempty and does not contain the empty set.

\subsubsection{Dual families and intersections}
\label{sec_dual_families}

The \emph{dual} of a family $\family$ of subsets of $\N$ is
\[\family^* \defeq \big\{ B \subseteq \N \ \big | \ \forall A \in \family, \ B \cap A \neq \emptyset \big\},\]
the collection of all those sets which have nonempty intersection with all elements of $\family$.  The following facts are quick to check and will be used without mention:
\begin{enumerate}
    \item $(\family^*)^* = \family$, whereby it makes sense to say that $\family$ and $\family^*$ are \emph{dual};
    \item the empty family $\emptyset$ and $\mathcal{P}(\N)$ are dual, while the dual of a proper family is a proper family;
    \item $A \in \family$ if and only if $\N \setminus A \not\in \family^*$;
    \item $\family \subseteq \familytwo$ if and only if $\familytwo^* \subseteq \family^*$.
\end{enumerate}

\subsubsection{Filters, partition regularity, and ultrafilters}
\label{sec_filter_pr_and_ufs}

There are two complementary notions of largeness for families that feature prominently in this topic: that of being partition regular and that of being a filter.  A family $\family$ is \emph{partition regular} if for all $A \in \family$ and all finite partitions $A = A_1 \cup \cdots \cup A_k$, some piece $A_i$ belongs to $\family$.  The family $\family$ is a \emph{filter} if for all $A_1, A_2 \in \family$, the set $A_1 \cap A_2 \in \family$.  These notions are dual in the sense of fact \eqref{item_pr_filter_duality} below.

If $P$ is a property of families, we call a family that has property $P$ and that is also a filter a \emph{$P$ filter}. For example, we will consider proper filters, partition regular filters, syndetic filters, and so on. Note, for example, that $\emptyset$ and $\mathcal{P}(\N)$ are both partition regular filters.

The following facts are quick to check:
\begin{enumerate}
    \item \label{item_pr_filter_duality} a family $\family$ is partition regular if and only if the dual family $\family^*$ is a filter;
    \item if $\family$ is a proper filter, then $\family \subseteq \family^*$.\\
\end{enumerate}

\subsubsection{Translation algebra}
\label{sec_family_algebra}

For $A \subseteq \N$ and families $\family$ and $\familytwo$ of subsets of $\N$, we define
\begin{align*}
    A - \family &\defeq \big\{ n \in \N \ \big| \ A-n \in \family \big\},\\
    \familyone + \familytwo &\defeq \big\{ B \subseteq \N \ \big| \ B - \familytwo \in \family \big\}.
\end{align*}
Thus, we see that $n \in A - \family$ if and only if $A - n \in \family$ and that $B \in \family + \familytwo$ if and only $B - \familytwo \in \family$.  It is simple to check that $\family + \familytwo$ is a family of subsets of $\N$.

The definition of $\family + \familytwo$ appears for filters in \cite{berglund_hindman_1984,papazyan_1989} and, when $\family$ and $\familytwo$ are ultrafilters, agrees with the usual definition of sums of ultrafilters (cf. \cite[Thm. 4.12]{hindman_strauss_book_2012}).  Family sums will appear throughout the paper.  It is useful to note that, loosely speaking, the containment $\family \subseteq \familytwo + \familythree$ means: for all $A \in \family$, there are $\familytwo$ many positive integers $n$ for which the set $A-n$ belongs to $\familythree$.

The following lemma records a number of algebraic facts that will be useful later on.

\begin{lemma}
    \label{lemma_filter_algebra}
    
    Let $n \in \N$, $A, B \subseteq \N$, and $\filter, \filtertwo$ be families.
    \begin{enumerate}
        \item
        \label{item_family_prop_one_new}
        If $\family$ is a filter, then $(A - \filter) \cap (B - \filter) = (A \cap B) - \filter$.

        \item \label{item_family_prop_three}
        $(A -n) - \filter = (A - \filter) - n$.

        \item \label{item_family_prop_four}
        $A - (\filter + \filtertwo) = (A - \filtertwo) - \filter$.

        \item \label{item_sum_dual}
        $(\familyone + \familytwo)^* = \familyone^* + \familytwo^*$.
    \end{enumerate}
\end{lemma}

\begin{proof}
    \eqref{item_family_prop_one_new} \  Suppose $\family$ is a filter. To see that $(A - \filter) \cap (B - \filter) \subseteq (A \cap B) - \filter$, let $n \in (A - \filter) \cap (B - \filter)$.  Since $A - n \in \filter$ and $B - n \in \filter$, we have that $(A \cap B) - n = (A-n) \cap (B-n) \in \filter$.  Thus, $n \in (A \cap B) - \filter$. To see that $(A \cap B) - \family \subseteq (A- \family) \cap (B-\family)$, let $n \in (A \cap B) - \family$ so that $(A - n) \cap (B-n) \in \family$.  Since $\family$ is upward closed, we have that $A -n \in \family$ and $B-n \in \family$.  Therefore, $n \in (A - \family) \cap (B-\family)$, as desired.

    \eqref{item_family_prop_three} \ Note that $m \in (A -n) - \filter$ if and only if $A-n-m \in \filter$, which happens if and only if $A-(m+n) \in \filter$.  We see that this happens if and only if $m+n \in A-\filter$, that is, if and only if $m \in (A-\filter) - n$, as desired.

    \eqref{item_family_prop_four} \ We see that $n \in A - (\filter + \filtertwo)$ if and only if $A-n \in \filter + \filtertwo$, which happens if and only if $(A-n) - \filtertwo \in \filter$.  By \eqref{item_family_prop_three}, this happens if and only if $(A-\filtertwo) - n \in \filter$, which happens if and only if $n \in (A - \filtertwo) - \filter$, as desired.
    
    \eqref{item_sum_dual} \ We will show first that $\familyone^* + \familytwo^* \subseteq (\familyone + \familytwo)^*$.  Suppose $A \in \familyone^* + \familytwo^*$, and let $B \in \familyone + \familytwo$.  Since $A - \familytwo^* \in \familyone^*$ and $B - \familytwo \in \familyone$, we have that there exists $n \in (A - \familytwo^*) \cap (B - \familytwo)$.  We see that $A - n \in \familytwo^*$ and $B - n \in \familytwo$.  It follows that $(A-n) \cap (B-n) \neq \emptyset$, whereby $A \cap B \neq \emptyset$, as desired.

    Now we can write
    \[\familyone^* + \familytwo^* \subseteq (\familyone + \familytwo)^* = \big((\familyone^*)^* + (\familytwo^*)^* \big)^* \subseteq \big((\familyone^* + \familytwo^*)^* \big)^* = \familyone^* + \familytwo^*,\]
    where the first containment follows from the previous paragraph and the second containment follows from the previous paragraph applied to the dual classes.  This shows the desired equality.
\end{proof}

\subsubsection{Translation-invariant and idempotent families}
\label{sec_trans_inv_and_idempotent_families}

For $m \in \N$ and a family $\family$ of subsets of $\N$, we define
\[\family - m \defeq \big\{ B - m \ \big| \ B \in \family \big\}.\]
The family $\family$ is \emph{translation invariant} if $\family - n \subseteq \family$ for all $n \in \N$. In the algebra of the previous section, this is equivalent to saying that $\family \subseteq \{\N\} + \family$.  The family $\filter$ is \emph{idempotent} if $\filter \subseteq \filter + \filter$, equivalently, if for all $A \in \filter$, $A - \filter \in \filter$. (We comment on why the set inclusion in the definition is not an equality in \cref{rmk_why_set_inclusion_not_equality} below.) Note that all translation-invariant families are idempotent.  We will give some less trivial examples of idempotent families in \cref{sec_families_of_sets}.

\begin{remark}
    \label{rmk_showing_idempotency}
    The following situation arises several times in this paper.  A family $\family$ is defined as the upward closure of a collection of special subsets of $\N$.  To show that $\family$ is idempotent, it suffices to show that for all special sets $A \subseteq \N$, the set $A - \filter$ belongs to $\family$.  Indeed, if $B \in \filter$, then $B$ contains some special set $A$.  If we show that $A-\filter \in \filter$, then since $A- \filter \subseteq B - \filter$, we have that $B - \filter \in \filter$, as desired.
\end{remark}

\begin{remark}
\label{rmk_why_set_inclusion_not_equality}
    The definition of idempotent family we give in this section matches the one for filters given implicitly in \cite{papazyan_1989} and explicitly in \cite{Krautzberger_2009}.  It may seem more natural to require equality in the definition: $\filter = \filter + \filter$.  While certainly some families do satisfy this, equality appears to be rather rare.  Here is an example of a dynamically simple idempotent filter $\filter$ which satisfies $\filter \subsetneq \filter + \filter$.  See \cref{sec_top_dynamics} for the notation.
    
    Fix $\alpha \in \R \setminus \Q$, and consider the irrational rotation $(\R / \Z, T_{\alpha}: x \mapsto x + \alpha)$.  Put
    \[\filter \defeq \upclose \big\{ R(0, B_\eps(0)) \ \big| \ \eps > 0\big\}.\]
    The family $\filter$ is an idempotent filter.  This can be easily checked (see, eg., the proof of the ``only if'' statement of \cref{thm_dcs_iff_cs}).

    Let $(k_i)_{i=1}^\infty \subseteq \N$ be such that the only limit point of $i \mapsto k_i \alpha$ is $0$. For $n \in \N$, the quantity
    \[\eps_n \defeq \min \big\{ \|k_i \alpha - n \alpha \|/2 \ \big| \ i \in \N, \ k_i > n \big\},\]
    where $\| \cdot \|: \R / \Z \to [0,1/2)$ denotes the Euclidean distance to $\Z$, is non-zero.
    We claim that the set
    \[A \defeq \bigcup_{n = 1}^\infty \big( R(0, B_{\eps_n}(0) ) + n \big)\]
    belongs to $\filter + \filter$ but not to $\filter$.  Indeed, for all $n \in \N$, the set $A - n$ contains $R(0, B_{\eps_n}(0) ) \in \filter$, and so $A \in \{\N\} + \filter \subseteq \filter + \filter$.

    To see that $A \not\in \filter$, we will show that for all $\delta > 0$, we have $R(0,B_{\delta}(0)) \not\subseteq A$.  Let $\delta > 0$, and let $i \in \N$ such that $k_i \in R(0,B_{\delta}(0))$.  Now, for all $n < k_i$, we have that $\|k_i \alpha - n \alpha\| \geq 2\eps_n$, whereby $k_i \notin R(0,B_{\eps_n}(0)) + n$.  For all $n \geq k_i$, we have that $k_i \notin R(0,B_{\eps_n}(0)) + n$ since $R(0,B_{\eps_n}(0)) + n \subseteq \{n+1, n+2, \ldots\}$.  Therefore, we have that $k_i \not\in A$, as was to be shown.
\end{remark}

\subsection{Topology and dynamics}

For a set $U$ in a topological space $X$, we denote by $\overline U$, $U^\circ$, and $\partial U \defeq \overline{U} \setminus U^\circ$ the closure, interior, and boundary of $U$, respectively.  In a metric space $(X,d)$, we denote by $B_\eps(x)$ the open ball centered at a point $x \in X$ with radius $\eps > 0$.

\subsubsection{Topological dynamics}
\label{sec_top_dynamics}

A \emph{(topological dynamical) system} $(X,T)$ is a nonempty, compact metric space $(X,d_X)$ together with a continuous self-map $T: X \to X$.  A \emph{subsystem of $(X,T)$} is a system of the form $(Y,T)$, where $Y \subseteq X$ is nonempty, closed (equivalently, compact), and \emph{$T$-invariant}, meaning $TY \subseteq Y$.  A system $(X,T)$ whose only subsystem is itself is called \emph{minimal}.  All systems in this paper will be considered as actions of the semigroup $(\N, +)$ by continuous maps on a compact metric space.  Thus, even in the event that $T$ is a homeomorphism (in which case, the system $(X,T)$ is called \emph{invertible}), the emphasis will be on non-negative iterates of the map $T$.

Given a system $(X,T)$, a point $x \in X$, and a set $A \subseteq \N$, we define
\[T^A x \defeq \big\{ T^a x \ \big| \ a \in A \big\}.\]
The \emph{orbit of $x$} is the set $T^{\N} x$ and the \emph{orbit closure of $x$} is the set $\overline{T^{\N}x}$. Note that $\overline{T^{\N}x}$ is a nonempty, closed, $T$-invariant subset of $X$, whereby $(\overline{T^{\N}x}, T)$ is a subsystem of $(X,T)$.  Thus, we see that $(X,T)$ is minimal if and only if every point $x \in X$ has a dense orbit.

Recall from the introduction that given a system $(X,T)$, a set $U \subseteq X$, and a point $x \in X$, we write
\begin{align*}
    R(x, U) &\defeq \big\{n \in \N \ \big | \ T^n x \in U \big\}
\end{align*}
for the times at which the point $x$ visits the set $U$. To emphasize the transformation $T$, we sometimes write this set as $R_T(x,U)$. It is a simple algebraic fact that we will use without mention that for $n \in \N_0$,
\[R(x,U) - n = R(T^n x, U) = R(x, T^{-n}U).\]

A set $A \subseteq \N$ is \emph{syndetic} if there exists $N \in \N$ such that $A \cup (A-1) \cup \cdots \cup (A-N) = \N$.
Syndetic sets -- discussed in more context in \cref{sec_families_of_sets} -- are intimately linked to minimal dynamics.  Indeed, let $(X,T)$ be a system.  Both of the facts below are not hard to show following the results in \cite[Ch. 1, Sec. 4]{furstenberg_book_1981}.
\begin{enumerate}
    \item Let $x \in X$.  The subsystem $(\overline{T^{\N_0}x}, T)$ is minimal if and only if for all open $U \subseteq X$ containing $x$, the set $R(x,U)$ is syndetic. In this case, the point $x$ is said to be \emph{uniformly recurrent.}

    \item The system $(X,T)$ is minimal if and only if for all $x \in X$ and all nonempty, open $U \subseteq X$, the set $R(x,U)$ is syndetic.
\end{enumerate}
We will use these facts several times in this paper.

\begin{lemma}
    \label{lemma_isolated_points_and_minimal_systems}
    Let $(X,T)$ be a minimal system.  There exists an isolated point in $X$ if and only if $X$ is finite and the system $(X,T)$ is periodic.
\end{lemma}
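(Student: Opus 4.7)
The plan is to handle the two implications separately, with the forward direction being the content of the lemma and the backward direction being essentially trivial.

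For the easy direction ($\Leftarrow$), suppose $X$ is finite. Since $X$ is a finite Hausdorff space, it carries the discrete topology, and so every point is isolated; periodicity of $(X,T)$ is not even needed here.

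For the nontrivial direction ($\Rightarrow$), suppose there exists an isolated point $x_0 \in X$, so that $\{x_0\}$ is open. The key idea is to apply fact (ii) stated just before the lemma: in a minimal system, for every $x \in X$ and every nonempty open $U \subseteq X$, the set $R(x,U)$ is syndetic. Applying this with $x = x_0$ and $U = \{x_0\}$ yields that $R(x_0, \{x_0\}) = \{n \in \N \mid T^n x_0 = x_0\}$ is syndetic, hence nonempty. Fix $n$ in this set, so that $T^n x_0 = x_0$.

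Now I would argue finiteness and periodicity from this single return. The orbit $T^{\N_0} x_0 = \{x_0, Tx_0, \ldots, T^{n-1}x_0\}$ has at most $n$ elements, so it is a finite subset of the Hausdorff space $X$, and therefore closed. By minimality of $(X,T)$, the orbit of $x_0$ is dense in $X$, so
\[X = \overline{T^{\N_0} x_0} = T^{\N_0}x_0 = \{x_0, Tx_0, \ldots, T^{n-1}x_0\},\]
which is finite. Finally, for every $k \in \N_0$ we have $T^n(T^k x_0) = T^k(T^n x_0) = T^k x_0$, so $T^n$ fixes every point of $X$, i.e.\ $T^n = \mathrm{id}_X$, which is the sense in which $(X,T)$ is periodic.

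There is no real obstacle; the only subtlety is remembering that ``periodic'' for a minimal system means $T^N = \mathrm{id}_X$ for some $N$, which is immediate once one point is periodic because the orbit exhausts $X$ and $T$ commutes with its powers.
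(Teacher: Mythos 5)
Your proposal is correct and follows essentially the same route as the paper: apply the minimality fact to the open set $U=\{x_0\}$ to get a syndetic (hence nonempty) return set, deduce a period $n$ with $T^n x_0 = x_0$, and conclude that $X$ equals the finite orbit and that $T^n$ is the identity. You simply spell out in more detail the steps the paper compresses into one sentence (orbit finite, hence closed, hence all of $X$ by minimality; $T^n = \mathrm{id}$ by commuting powers), plus the trivial converse.
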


\begin{proof}
    If $x \in X$ is isolated, the set $U \defeq \{x\}$ is open.  Since $(X,T)$ is minimal, the set $R(x,U)$ is syndetic. In particular, there exists $n \in \N$ such that $T^n x = x$, whereby $x$ is a periodic point.  Since $(X,T)$ is minimal, the set $X$ is equal to the (finite) orbit of $x$ and $(X,T)$ is periodic.  The converse is trivial.
\end{proof}

\subsubsection{Symbolic space and full shift}
\label{sec:symbolic_prelim}

We denote the space of 0--1 valued functions on $\N$ by $\{0,1\}^{\N}$. Given the product topology, this is a compact Hausdorff space.  There are many equivalent metrics that generate the product topology on $\{0,1\}^{\N}$.  Instead of specifying one of these metrics explicitly, it suffices for our purposes to fix one and note that two elements of $\{0,1\}^{\N}$ are close if and only if they agree as functions on a long initial interval $\{1, \ldots, N\}$ of $\N$.

Let $\omega \in \{0,1\}^{\N}$ and $i \in \N$.  We write $\omega(i)$ for the value of the function $\omega$ at $i$.  Given $k \in \{0,1\}$, the \emph{cylinder set} of all those functions in $\{0,1\}^{\N}$ that evaluate to $k$ at $i$ is denoted by $[k]_i$.  This is easily checked to be a clopen subset of $\{0,1\}^{\N}$.  The \emph{support} of $\omega$ is
\[\supp (\omega) \defeq \big\{ n \in \N \ \big| \ \omega(n) = 1 \big\}.\]

We define the (left) shift $\sigma: \{0,1\}^{\N} \to \{0,1\}^{\N}$ at the function $\omega$ by the rule $(\sigma \omega)(i) = \omega(i+1)$, $i \in \N$.  The shift is easily seen to be a continuous self-map of $\{0,1\}^{\N}$.  The \emph{full shift} is the system $(\{0,1\}^{\N}, \sigma)$.

It will frequently be useful to consider functions on $\N_0$ instead of $\N$.  Everything written above applies equally well to the space $\{0,1\}^{\N_0}$ and the \emph{full shift} $(\{0,1\}^{\N_0}, \sigma)$.

\subsection{Families of sets from combinatorics and dynamics}
\label{sec_families_of_sets}

We outline below those families of subsets of $\N$ that are important in this work.  The following nomenclature will be particularly convenient.  Let $\familytwo$ be a family. A set $A \subseteq \N$ is called a \emph{$\familytwo$ set} if it is a member of $\familytwo$, and a family $\familyone$ is called a \emph{$\familytwo$ family} if all members of $\familyone$ are $\familytwo$ sets, ie., $\familyone \subseteq \familytwo$. We will frequently consider in this work, for example, syndetic filters, ie., filters whose members are all syndetic.

A set $A \subseteq \N$ is \dots
\begin{enumerate}
    \item \dots \emph{syndetic} if there exists $N \in \N$ such that
    \[
        A \cup (A-1) \cup \cdots \cup (A-N) = \N;
    \]

    \item \dots \emph{thick} if for all finite $F \subseteq \N$, there exists $n \in \N$ such that $F + n \subseteq A$;

    \item \dots \emph{piecewise syndetic} if there exists $N \in \N$ such that the set
    \[A \cup (A-1) \cup \cdots \cup (A-N)\]
    is thick.
\end{enumerate}
We denote by $\syndetic$, $\thick$, and $\PS$ the families of syndetic, thick, and piecewise syndetic subsets of $\N$, respectively. All three families are translation invariant. It is well-known and not difficult to show that the families of syndetic and thick sets are dual and that
\[\PS = \big\{ A \cap B \ \big| \ A \in \syndetic, \ B \in \thick \big\}.\]
The family $\PS$ is partition regular \cite[Lemma 1]{brown_1971} (see also \cite[Prop. 2.5(h)]{christopherson_johnson_2022}) and so its dual, $\PS^*$, is a filter. It is not hard to show that a set $A$ belongs to $\PS^*$ if and only if for all finite $F \subseteq \N$, there exists a syndetic set $S \subseteq \N$ such that $F + S \subseteq A$.  Thus, members of the filter $\PS^*$ are called \emph{thickly syndetic} sets.

The following lemma gives a useful characterization of the filter of thickly syndetic sets as the largest syndetic, translation-invariant filter.

\begin{lemma}
\label{lemma_condition_on_subfamily_of_ps_star}
    The family $\PS^*$ is a syndetic, translation-invariant filter.  If $\family$ is a syndetic, translation-invariant filter, then $\family \subseteq \PS^*$.
\end{lemma}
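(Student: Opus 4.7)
The plan is to verify the three asserted properties of $\PS^*$ and then to prove the containment, all using the characterization of thickly syndetic sets given just before the lemma: $A \in \PS^*$ if and only if for every finite $F \subseteq \N$ there is a syndetic $S \subseteq \N$ with $F + S \subseteq A$.

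First, I will show $\PS^*$ is a syndetic, translation-invariant filter. That $\PS^*$ is a filter is immediate from $\PS = \syndetic \classcap \thick$ together with \cref{lemma_class_cap_is_pr}, which asserts that the dual of a class-cap family is a filter. To see $\PS^*$ is syndetic, apply the thickly-syndetic characterization to any $A \in \PS^*$ with the singleton $F = \{1\}$: one obtains a syndetic $S$ with $1 + S \subseteq A$, and a translate of a syndetic set is syndetic, so $A$ is syndetic. For translation invariance, fix $A \in \PS^*$ and $m \in \N$. For any finite $F \subseteq \N$, the set $F + m$ is finite, so by hypothesis there exists a syndetic $S$ with $(F + m) + S \subseteq A$; rewriting, $F + S \subseteq A - m$, which by the characterization places $A - m$ in $\PS^*$.

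Next, I will show that any syndetic, translation-invariant filter $\family$ is contained in $\PS^*$. Let $A \in \family$ and let $F \subseteq \N$ be finite. The translation invariance of $\family$ (i.e., $\family - n \subseteq \family$) gives $A - n \in \family$ for every $n \in F$, and the filter property then forces
\[
    S \defeq \bigcap_{n \in F} (A - n) \in \family.
\]
Since $\family$ is a syndetic family, $S$ is syndetic, and by construction $s + n \in A$ for every $s \in S$ and $n \in F$, i.e., $F + S \subseteq A$. The thickly-syndetic characterization now yields $A \in \PS^*$, and since $A \in \family$ was arbitrary, $\family \subseteq \PS^*$.

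There is no serious obstacle here; the only subtleties are bookkeeping around the convention $A - n = \{m \in \N \mid m + n \in A\}$ and using the hypotheses in the correct order (translation invariance first to obtain many shifts in $\family$, then the filter axiom to intersect them, then the syndeticity of $\family$ to declare the intersection syndetic).
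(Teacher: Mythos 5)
Your proof is correct and follows essentially the same route as the paper: for the containment, the paper likewise uses translation invariance to put the shifts $A-1,\ldots,A-N$ in $\family$, the filter property to intersect them, syndeticity of $\family$ to conclude the intersection is syndetic, and then the containment $\{1,\ldots,N\} + \bigl((A-1)\cap\cdots\cap(A-N)\bigr) \subseteq A$ to invoke the thickly syndetic characterization. The only difference is cosmetic — you work with an arbitrary finite $F$ rather than initial segments, and you spell out the syndeticity and translation invariance of $\PS^*$, which the paper simply refers back to its earlier discussion.
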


\begin{proof}
    That $\PS^*$ is a syndetic, translation-invariant filter was discussed above.
    
    Let $\family$ be a syndetic, translation-invariant filter, and let $A \in \family$.  Since $\family$ is translation invariant, for all $N \in \N$, the sets $A-1$, \dots, $A-N$ all belong to $\family$. Since $\family$ is a syndetic filter, the set $(A - 1) \cap \cdots \cap (A - N)$ belongs to $\family$ and is syndetic.  Since
    \[\{1, \ldots, N\} + \big( (A - 1) \cap \cdots \cap (A - N) \big) \subseteq A,\]
    we see that $A$ is thickly syndetic and hence is a member of $\PS^*$.
\end{proof}

The family of infinite, finite sums sets will also appear in this work.
A set $A \subseteq \N$ is called an \emph{IP set} if there exists a sequence $(x_i)_{i=1}^\infty \subseteq \N$ such that
\begin{align}
\label{eqn_def_of_fs_set}
    \FS(x_i)_{i=1}^\infty \defeq \left\{ \sum_{f \in F} x_f \ \middle | \ F \subseteq \N \text{ is finite and nonempty} \right\} \subseteq A.
\end{align}
Here ``FS'' stands for ``finite sums.''  It is a consequence of Hindman's theorem \cite{hindman_1974} that the family $\IP$ is partition regular (see \cite[Lemma 2.1]{bergelson_hindman_1993}), and it is an easy exercise that thick sets are IP.  Thus, the dual family, $\IP^*$, is a syndetic filter.  It is a short exercise to check that $\IP$ is an example of an idempotent family that is not translation invariant.

\section{First results on dynamically syndetic sets}
\label{sec_dsyndetic}

The families of dynamically syndetic and dynamically central syndetic subsets of $\N$ -- defined at the top of \cref{sec:intro} -- will be denoted henceforth by $\dsyndetic$ and $\dcsyndetic$, respectively.  It is clear from the definitions and the facts in \cref{sec_top_dynamics} that
\[\dcsyndetic \subseteq \dsyndetic \subseteq \syndetic.\]
The set of odd, positive integers and the set in \eqref{eqn_intro_even_odd} from the introduction demonstrate that each of these inclusions is proper. (That $2\N-1 \not\in \dcsyndetic$ follows from, eg., \cref{lemma_dilates_of_dsyndetic_sets}.)  Here is a short list of examples of dynamically (central) syndetic sets that may be helpful to have in mind.  Let $\alpha \in \R \setminus \Q$.
\begin{enumerate}
    \item \label{item:irrational_rotation_dcS_definition}
    The irrational rotation $(\R / \Z, x \mapsto x + \alpha)$, where $\alpha$ is interpreted as an element of $\R / \Z$, is a minimal system.  For all $x \in \R / \Z$ and all nonempty, open $U \subseteq \R / \Z$, the set
    \[R(x,U) = \big\{ n \in \N \ \big| \  x + n \alpha  \in U \big\}\]
    is dynamically syndetic and, if $x \in U$, dynamically central syndetic.  More generally, supersets of Bohr (Bohr$_0$) sets -- those arising in this way from compact group rotations -- are dynamically (central) syndetic.

    \item When $\alpha > 1$, the sequence $(\lfloor n \alpha \rfloor)_{n \in \N}$, where $\lfloor \, \cdot \, \rfloor: \R \to \Z$ denotes the integer part function, is called a \emph{Beatty sequence}.  It is not hard to show that the image of this sequence is equal to the set
    \[
        \big\{ n \in \N \ \big| \ \big\{ n / \alpha \} \in \big( (\alpha-1) / \alpha, 1 \big) \big\},
    \]
    where $\{ \, \cdot \, \}: \R \to [0,1)$ denotes the fractional part function. Thus, Beatty sequences yield Bohr, and hence dynamically syndetic, sets.  What is less clear is that, in fact, Beatty sequences yield dynamically central syndetic sets.  We will see this as a consequence of \cref{thm:simple_equivalent_dcS}.

    \item Using the notation from \eqref{item:irrational_rotation_dcS_definition}, the set
    \[\big\{ n \in \N \ \big| \  n^2 \alpha  \in U \big\}\]
    is a dynamically syndetic set.  It is, in fact, an example of a nil-Bohr set, one that arises from a rotation on a compact homogeneous space of a nilpotent Lie group.  Nil-Bohr (nil-Bohr$_0$) sets are dynamically (central) syndetic \cite{Host-Kra_nilbohr}.
    
    \item \label{item_dcs_two_adic_example} 
    The function $\nu_2: \N \to \N_0$ defined by $\nu(n) \defeq \max \{ k \in \N_0 \ | \ 2^k \text{ divides } n \}$ is called the \emph{$2$-adic valuation}.
    The set
    \[
        A \defeq \big\{ n \in \N \ \big | \ \nu_2(n) \text{ is even} \big \}
    \]
    is a dynamically syndetic set.  To see this, one can show that the point $1_A$ is uniformly recurrent in the full shift $(\{0,1\}^{\N_0},\sigma)$, whereby $A = R_\sigma(1_A,[1]_0)$ is dynamically syndetic.  In fact, the point $1_{A \cup \{0\}} \in [1]_0$ is uniformly recurrent, and so $A$ is dynamically central syndetic.  By \cref{lemma_dilates_of_dsyndetic_sets} and the fact that $\N \setminus A = A / 2$, the same can be said about the set of positive integers with odd 2-adic valuation.
    
    \item The \emph{Chacon word} $0010001010010 0010001010010 1 0010001010010\ldots$ is the unique, non-constant word in the symbols $0$ and $1$ that is fixed under the substitutions $0 \mapsto 0010$ and $1 \mapsto 1$.  The set of locations of the 1's in the Chacon word is a dynamically syndetic set, for the same reason as in the previous point: the word, interpreted as a point in the full shift $(\{0,1\}^{\N_0},\sigma)$, is known to be uniformly recurrent \cite[Sec. 5.6.6]{devries_2014}.\\
\end{enumerate}

Many of the results in this section demonstrate the robustness of the family of dynamically (central) syndetic sets under changes to the combinatorial, topological, or dynamical requirements in its definition. The family of dynamically (central) syndetic subsets of $\N$ remains unchanged if \dots
\begin{enumerate}
    \item \dots \ the map $T$ is required to be a homeomorphism, that is, if the system $(X,T)$ is required to be invertible.  This is shown in \cref{lemma_may_assume_invertible} below.

    \item \dots \ the space $X$ is allowed to be non-metrizable. This is a consequence of \cite[Lemma 3.1]{glasscock_le_2025}.
    
    \item \dots \ the space $X$ is required to be zero dimensional and the set $U$ is required to be clopen. This is a consequence of \cref{thm:simple_equivalent_dcS}.

    \item \dots \ (for dynamically central syndetic sets) the point $x$ is allowed to be in $\overline{U}$ instead of being required to be in $U$.  Sets of this form are called \emph{very strongly central} in \cite[Def. 2.10]{bergelson_hindman_strauss_2012}, where a number of equivalent characterizations in terms of ultrafilters are given.  That the families of very strongly central sets and dynamically central syndetic sets are the same is shown in \cref{thm:simple_equivalent_dcS}.

    \item \dots \ sets of return times $R(x,U)$ are considered up to equivalence on thickly syndetic sets ($\PS^*$ sets). This is a consequence of \cite[Theorem 6.12]{glasscock_le_2025}: a set $A \subseteq \N$ is dynamically (central) syndetic if and only if for all thickly syndetic $B \subseteq \N$, the set $A \cap B$ is dynamically (central) syndetic.  We will have immediate need for a more basic version of this result for cofinite sets in \cref{lemma_ds_dcs_modifyable_on_a_finite_set}.\\
\end{enumerate}

There are other directions, however, in which this definitional robustness does not extend.
\begin{enumerate}
\setcounter{enumi}{5}
    \item Minimality is a key feature of the definition.  For all $A \subseteq \N$, there exists a system $(X,T)$, a point $x \in X$, and a nonempty, open set $U \subseteq X$ containing $x$ such that $A = R(x,U)$.  Indeed, note that considering $A$ as a subset of $\N_0$, we have that $A = R(1_{A},[1]_0)$ in the full shift $(\{0,1\}^{\N_0},\sigma)$.  Thus, dropping the minimality assumption trivializes the definition of dynamical syndeticity.

    \item \label{item_generalization_to_z} That we are considering actions of the semigroup $(\N,+)$ instead of the group $(\Z,+)$ matters.  For example, if we define dynamically central syndetic subsets of $\Z$ analogously to those in $\N$, we see that all dynamically central syndetic subsets of $\Z$ contain 0.  In this case, if $x \in \overline{U} \setminus U$, then the set $R(x,U) \subseteq \Z$ is, in contrast to \cref{thm:simple_equivalent_dcS}, not dynamically central syndetic because it does not contain 0.  Other results from this paper for $(\N,+)$-systems fail to generalize under this definition, too, most notably the fact that thickly syndetic ($\PS^*$) sets (which do not necessarily contain 0) are dynamically central syndetic. Along with other evidence, this suggests that allowing $x \in \overline{U}$ in the definition of the family of dynamically central syndetic sets (as in \cref{thm:simple_equivalent_dcS} \eqref{item:very_strongly_central}) may yield the ``correct'' notion for more general (semi)group actions.
\end{enumerate}

\subsection{Definitional robustness}
\label{sec_dsyndetic_robust}

Here we show that invertibility and changes on finite sets do not affect the definition of the family of dynamically syndetic sets.

\begin{lemma}
\label{lemma_may_assume_invertible}
    A set $A \subseteq \N$ is dynamically (central) syndetic if and only if there exists an invertible system $(X,T)$, a point $x \in X$, and a nonempty, open set $U \subseteq X$ (containing $x$) such that $R(x,U) \subseteq A$.
\end{lemma}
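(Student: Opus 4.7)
The ``if'' direction is immediate since any invertible minimal system is, in particular, a minimal system. For the ``only if'' direction, my plan is to pass from the given minimal system $(X,T)$ to its natural extension (inverse limit) and then extract an invertible minimal subsystem that carries the same return-time data.

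Given a dynamically (central) syndetic set $A$, fix a minimal system $(X,T)$, a point $x \in X$, and a nonempty open $U \subseteq X$ (containing $x$ in the central case) with $R(x,U) \subseteq A$. Note first that $T$ is surjective, since $TX$ is a nonempty, closed, $T$-invariant subset of $X$ and so equals $X$ by minimality. Form the inverse limit
\[\tilde X \defeq \big\{ (x_n)_{n \geq 0} \in X^{\N_0} \ \big| \ T x_{n+1} = x_n \text{ for all } n \geq 0 \big\},\]
a closed (hence compact metric) subset of $X^{\N_0}$, together with the shift $\tilde T(x_0, x_1, x_2, \ldots) \defeq (Tx_0, x_0, x_1, \ldots)$. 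The map $\tilde T$ is a homeomorphism of $\tilde X$ with inverse $(x_0, x_1, x_2, \ldots) \mapsto (x_1, x_2, \ldots)$; surjectivity of $T$ ensures $\tilde X \neq \emptyset$; and the coordinate projection $\pi : \tilde X \to X$ is a continuous surjection satisfying $\pi \circ \tilde T = T \circ \pi$. By Zorn's lemma, pick a minimal subsystem $Y$ of $(\tilde X, \tilde T)$. Since $\tilde T(Y)$ is a nonempty, closed, $\tilde T$-invariant subset of $Y$, minimality gives $\tilde T(Y) = Y$, and combined with the injectivity of $\tilde T$ this makes $(Y, \tilde T|_Y)$ an invertible minimal system. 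Furthermore, $\pi(Y)$ is a nonempty, closed, $T$-invariant subset of $X$, so $\pi(Y) = X$ by minimality of $(X,T)$.

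Finally, choose any $\tilde x \in Y \cap \pi^{-1}(\{x\})$ (nonempty by the previous sentence) and set $\tilde U \defeq Y \cap \pi^{-1}(U)$, an open subset of $Y$. The relation $\pi \circ \tilde T = T \circ \pi$ gives, for every $n \in \N$, that $\tilde T^n \tilde x \in \tilde U$ if and only if $T^n x \in U$; hence the return set of $\tilde x$ to $\tilde U$ under $\tilde T$ equals $R(x,U) \subseteq A$. In the central case, $x \in U$ yields $\tilde x \in \tilde U$; in general, $R(x,U)$ is nonempty (in fact syndetic), so $\tilde T^n \tilde x \in \tilde U$ for some $n$, ensuring $\tilde U$ is nonempty. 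The main subtlety is ensuring that the minimal subsystem $Y$ still contains a preimage of the specific basepoint $x$ appearing in $R(x,U)$; this is precisely where minimality of $(X,T)$ enters, via the surjectivity of $\pi|_Y$.
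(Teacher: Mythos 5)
Your proof is correct and follows essentially the same route as the paper: pass to the natural extension (inverse limit) of $(X,T)$ and lift the return-time set, noting $R(\tilde x,\pi^{-1}U)=R(x,U)\subseteq A$. The only difference is that the paper simply cites the fact that the natural extension of a minimal system is itself minimal, whereas you avoid invoking that lemma by extracting a minimal subsystem $Y$ of the extension and using minimality of $(X,T)$ to conclude $\pi(Y)=X$, hence that $Y$ contains a preimage of the basepoint $x$ --- a valid, self-contained way to handle that step.
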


\begin{proof}
    The ``if'' statement is immediate.  To see the ``only if'' statement, suppose $A \subseteq \N$ is dynamically syndetic.  There exists a system $(X,T)$, a point $x \in X$, and a nonempty, open set $U \subseteq X$ such that $R(x,U) \subseteq A$.  Let $\pi: (W,T) \to (X,T)$ be the natural extension (see \cite[Definition 6.8.10]{downarowiczbook}) of $(X,T)$, and let $w \in W$ be such that $\pi w = x$. Since $(X,T)$ is minimal, so is $(W,T)$ (see, for example, \cite[Lemma 2.9]{glasscock_koutsogiannis_richter_2019}). The system $(W,T)$ is invertible, and $R(w,\pi^{-1} U) = R(x,U)$.  Thus, the set $A$ contains $R(w,\pi^{-1} U)$ from the invertible system $(W,T)$.  Note that if $A$ is dynamically central syndetic and $x \in U$, then $w \in \pi^{-1} U$, whereby the set $R(x,U) = R(w,\pi^{-1} U)$ is dynamically central syndetic from the invertible system $(W,T)$.
\end{proof}

\begin{lemma}
    \label{lemma_ds_dcs_modifyable_on_a_finite_set}
    Let $A \subseteq \N$ be dynamically (central) syndetic.  If $B \subseteq \N$ is cofinite, then $A \cap B$ is dynamically (central) syndetic.
\end{lemma}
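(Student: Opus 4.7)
The approach is to modify the witness of dynamical (central) syndeticity of $A$ so that its return time set also lies in $A \cap B$. Let $(X,T)$ be a minimal system, $x \in X$, and $U \subseteq X$ nonempty open with $R(x,U) \subseteq A$ (and $x \in U$ in the central case). Set $F \defeq \N \setminus B$, which is finite, and $F' \defeq \{T^n x \mid n \in F\} \subseteq X$, a finite set. The first attempt is $U' \defeq U \setminus F'$: since $X$ is Hausdorff and $F'$ is finite, $U'$ is open, and by construction $T^n x \notin U'$ for every $n \in F$, so $R(x,U') \subseteq R(x,U) \cap B \subseteq A \cap B$. It remains to check when $U'$ is nonempty (and contains $x$ in the central case).

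This direct attempt succeeds when $(X,T)$ is not periodic. By \cref{lemma_isolated_points_and_minimal_systems}, a non-periodic minimal system has no isolated points, so the nonempty open set $U$ is infinite, and removing the finite set $F'$ leaves $U'$ nonempty. Moreover, in such a system no positive iterate of $x$ returns to $x$, so $x \notin F'$; in the central case this gives $x \in U \setminus F' = U'$, as needed.

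The remaining case is when $(X,T)$ is periodic, i.e., $X$ is a cycle of some length $m$. Then $R(x,U)$ is a finite union of complete arithmetic progressions modulo $m$, and merely shrinking $U$ in the finite discrete space $X$ cannot thin out a progression. The fix is to switch to a new minimal cyclic system $(\Z/(mL)\Z,\ y \mapsto y+1)$ for sufficiently large $L$: this system has return sets that are arithmetic progressions with common difference $mL$ whose first term can be chosen freely. In the central case, $x \in U$ forces $\{m, 2m, 3m, \ldots\} \subseteq R(x,U) \subseteq A$; choosing $L$ with $mL > \max F$ and taking $x' = 0$, $U' = \{0\}$ yields $R(0,\{0\}) = \{mL, 2mL, \ldots\} \subseteq A \cap B$ with $x' \in U'$. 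In the non-central case, fix any $i \in R(x,U)$ so that $(i + m\N_0) \cap \N \subseteq A$, choose $i' \equiv i \pmod m$ with $i' > \max F$, take $L > i'/m$, and set $x' = 0$, $U' = \{i'\}$; then $R(0,\{i'\}) = \{i', i'+mL, i'+2mL,\ldots\} \subseteq A \cap B$.

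The main (mild) obstacle is the periodic case: the naive shrinking argument fails there because return sets of cyclic systems are unions of \emph{complete} residue classes, and one must instead supply a fresh witness from a larger cyclic system. Outside this case, Hausdorffness and the absence of isolated points do all the work.
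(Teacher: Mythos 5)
Your argument is correct and essentially the same as the paper's: the paper likewise shrinks $U$ by deleting the finitely many orbit points $T^n x$ with $n \in \N \setminus B$ (which succeeds exactly when the minimal system is not finite and periodic, via the isolated-points lemma), and in the periodic case it falls back on the observation that $R(x,U)\setminus\{1,\dots,N\}$ contains an infinite arithmetic progression, i.e.\ a return-time set of a finite cyclic rotation --- precisely your $(\Z/(mL)\Z,\ y \mapsto y+1)$ device. One small correction in your non-central periodic case: choose $i' \in i + m\N_0$ with $i' > \max F$ (not merely $i' \equiv i \pmod m$), since only the progression $(i+m\N_0)\cap\N$ is known to lie in $A$, so a representative $i' < i$ of that residue class need not belong to $A$.
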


\begin{proof}
    By the definition of dynamically (central) syndetic, it suffices to show that for all minimal systems $(X,T)$, all points $x \in X$, all nonempty, open sets $U \subseteq X$ (containing $x$), and all $N \in \N$, the set $R(x,U) \setminus \{1, \ldots, N\}$ is dynamically (central) syndetic.  So, let $(X,T)$, $x \in X$, $U \subseteq X$ (contain $x$), and $N \in \N$ be as described.

    The set $V \defeq U \setminus \{Tx, T^2x, \ldots, T^N x\}$ is open and $R(x,U) \setminus \{1, \ldots, N\} \supseteq R(x, V)$.  If $V$ is nonempty, then we see that the set $R(x,U) \setminus \{1, \ldots, N\}$ is dynamically syndetic, as desired. Note that if $x \in \{Tx, T^2x, \ldots, T^N x\}$, then the system is periodic, $X = \{Tx, T^2x, \ldots, T^N x\}$, and $V$ is empty.  Therefore, if $V$ is nonempty and $x \in U$, then $x \in V$, and the set $R(x,U) \setminus \{1, \ldots, N\}$ is dynamically central syndetic, as desired.

    If, on the other hand, the set $V$ is empty, then $U \subseteq \{Tx, T^2x, \ldots, T^N x\}$.  Since $U$ is open, we see that some $T^i x$ is an isolated point of $X$.  By \cref{lemma_isolated_points_and_minimal_systems}, the system $(X,T)$ is finite and periodic.  In this case, the set $R(x,U) \setminus \{1, \ldots, N\}$ contains an infinite arithmetic progression (of the form $k \N$), which is dynamically (central) syndetic, as desired.
\end{proof}

\subsection{Translates and dilates}

In this section, we describe what happens to dynamically (central) syndetic sets under translation and dilation.  Recall that $A + n$ and $A-n$ are computed as subsets of $\N$, as described in \cref{sec_prelims}.

\begin{lemma}
\label{lemma_translates_of_dsyndetic_sets}
    Let $A \subseteq \N$ be dynamically syndetic.
    \begin{enumerate}
        \item
        \label{item_all_translates_are_ds}
        For all $n \in \N$, the set $A-n$ is dynamically syndetic.
        
        \item
        \label{item_some_translates_are_dcs}
        The set
        \begin{align}
            \label{eqn_translates_of_ds_set}
            \big\{ n \in \N \ \big| \ A-n \text{ is dynamically central syndetic} \big\}
        \end{align}
        is dynamically syndetic.
        
        \item
        \label{item_dcs_translates_of_dcs_are_dcs}
        If $A$ is dynamically central syndetic, then the set in \eqref{eqn_translates_of_ds_set} is dynamically central syndetic.
    \end{enumerate}
    Moreover, \eqref{item_all_translates_are_ds}, \eqref{item_some_translates_are_dcs}, and \eqref{item_dcs_translates_of_dcs_are_dcs} hold with $A-n$ replaced by $A+n$. Finally,
    \begin{align}
        \label{eqn_relationship_between_ds_and_dcs_classes}
        \dsyndetic = \bigcup_{n \in \N} \big(\dcsyndetic - n \big) = \bigcup_{n \in \N} \big(\dcsyndetic + n \big).
    \end{align}
\end{lemma}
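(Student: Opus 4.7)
The plan is to dispatch the three statements for $A-n$ using the identity $R(x,U) - n = R(T^n x, U)$, then to prove the $A+n$ analogues via the invertible extension from \cref{lemma_may_assume_invertible}, and finally to deduce the two equalities from parts (i) and (ii).

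For the $A-n$ version, fix a minimal system $(X,T)$, a point $x \in X$, and a nonempty open $U \subseteq X$ (with $x \in U$ if $A \in \dcsyndetic$) such that $R(x,U) \subseteq A$. The identity $R(x,U) - n = R(T^n x, U)$ in the same minimal system immediately gives (i). For (ii) and (iii), observe that for $n \in R(x,U)$ we have $T^n x \in U$, so $R(T^n x, U)$ is the set of return times of a point to an open neighborhood of itself---hence dynamically central syndetic. Since $R(T^n x, U) \subseteq A - n$ and $\dcsyndetic$ is upward closed, $A - n \in \dcsyndetic$; thus $\{n \in \N : A - n \in \dcsyndetic\}$ contains $R(x,U)$, which is dynamically syndetic in general and dynamically central syndetic when $x \in U$.

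For the $A+n$ analogues, pass to the invertible extension and, given $n \in \N$, set $y \defeq T^{-n} x$ and $V \defeq U \setminus \{T^{-j} x : 0 \leq j < n\}$; since finite sets are closed in a Hausdorff space, $V$ is open. In the non-periodic case, \cref{lemma_isolated_points_and_minimal_systems} ensures $X$ has no isolated points, so $U$ is uncountable and $V$ is nonempty. A direct orbit-chasing computation---the removed points exclude $k \in \{1,\ldots,n\}$, and invertibility matches $k \geq n+1$ with $R(x,U) + n$---yields $R(y,V) = R(x,U) + n \subseteq A+n$, proving (i). Moreover, $y \in V$ exactly when $n$ belongs to $N \defeq \{k \in \N : T^{-k} x \in U\}$, the return-time set of $x$ to $U$ under the minimal system $(X, T^{-1})$; hence $N$ is dynamically syndetic (and dynamically central syndetic when $x \in U$), and $\{n : A+n \in \dcsyndetic\} \supseteq N$ yields (ii) and (iii). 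The main obstacle is the periodic case, where the backward-orbit modification degenerates once $n$ exceeds the period; this is handled directly by noting that in a minimal periodic system of period $p$, $R(x,U)$ is a finite union of arithmetic progressions modulo $p$, so $A+n$ contains a tail of some such progression, which is dynamically syndetic via a rotation on $\Z/p\Z$ (and dynamically central syndetic precisely when the shift by $n$ sends some residue in $R(x,U)$ to $0 \bmod p$, reducing to a cofinite modification of $p\N$ via \cref{lemma_ds_dcs_modifyable_on_a_finite_set}).

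The two final equalities follow by combining parts (i) and (ii). The inclusions $\bigcup_n (\dcsyndetic - n) \subseteq \dsyndetic$ and $\bigcup_n (\dcsyndetic + n) \subseteq \dsyndetic$ are immediate from part (i) of the respective version. For the reverse, given $A \in \dsyndetic$, part (ii) of the $A+n$ version supplies $n \in \N$ with $A+n \in \dcsyndetic$, and the direct computation $(A+n) - n = A$ using the family-translate definition shows $A \in \dcsyndetic - n$; symmetrically, part (ii) of the $A-n$ version supplies $n$ with $A-n \in \dcsyndetic$, showing $A \in \dcsyndetic + n$ under the family-sum interpretation $\family + n = \{B : B - n \in \family\}$ derived from \cref{sec_family_algebra}.
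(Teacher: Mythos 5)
Your proof is correct, and its skeleton matches the paper's: the $A-n$ statements via $R(x,U)-n = R(T^nx,U)$, the $A+n$ statements via the invertible natural extension and the minimality of $(X,T^{-1})$, and the final identities from (i) and (ii). The one place you genuinely diverge is the mechanism for $A+n$: the paper proves the exact identity $R(x,U)+n = R(T^{-n}x,U)\cap\{n+1,n+2,\ldots\}$ and then invokes \cref{lemma_ds_dcs_modifyable_on_a_finite_set} to discard the finite initial segment, which works uniformly because that lemma already absorbs the periodic case through \cref{lemma_isolated_points_and_minimal_systems}; you instead shrink $U$ to $V = U\setminus\{T^{-j}x : 0\le j<n\}$ to get the clean equality $R(T^{-n}x,V)=R(x,U)+n$ inside the same system, which buys you an exact return-time set with no cofinite correction but forces a separate periodic-case analysis. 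That case is handled correctly for (i), though for (ii)/(iii) you only identify \emph{which} $n$ make $A+n$ dynamically central syndetic and leave implicit that this collection of $n$ (a union of residue classes mod $p$, namely $R_{T^{-1}}(x,U)$ for the finite rotation, containing $p\N$ when $x\in U$) is itself dynamically (central) syndetic; that is a one-line addition, not a real gap. Your treatment of the final display is in fact slightly more careful than the paper's terse "follows immediately": you note that $(A+n)-n=A$ exactly (so the $\dcsyndetic-n$ equality works with the literal definition of $\family-n$), while for $\dcsyndetic+n$ you must read it as $\{B : B-n\in\dcsyndetic\}$, since $\N$ itself is never of the form $B+n$; making that reading explicit, as you do, is the right call.
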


\begin{proof}
    Since $A$ is dynamically syndetic, there exists a minimal system $(X,T)$, a point $x \in X$, and a nonempty, open set $U \subseteq X$ such that $R(x,U) \subseteq A$. According to \cref{lemma_may_assume_invertible}, we can assume the system $(X, T)$ is invertible.
    
    For $n \in \N$, we have that
    \begin{align}
        \label{eqn_translate_of_rxu_in_aminusn}
        R(T^n x, U) = R(x,U) - n \subseteq A - n.
    \end{align}
    Thus, the set $A-n$ is dynamically syndetic, demonstrating \eqref{item_all_translates_are_ds}.  It also follows from \eqref{eqn_translate_of_rxu_in_aminusn} that $R(x,U)$ is a subset of the set in \eqref{eqn_translates_of_ds_set}.  Indeed, if $n \in R(x,U)$, then $T^n x \in U$, so the set $A-n$ is dynamically central syndetic.  The set $R(x,U)$ is dynamically syndetic, whereby the set in \eqref{eqn_translates_of_ds_set} is dynamically syndetic, demonstrating \eqref{item_some_translates_are_dcs}.  All of this holds in the case that $A$ is dynamically central syndetic, but we have additionally that $R(x,U)$ is dynamically central syndetic, demonstrating \eqref{item_dcs_translates_of_dcs_are_dcs}.

    Now we will show that \eqref{item_all_translates_are_ds}, \eqref{item_some_translates_are_dcs}, and \eqref{item_dcs_translates_of_dcs_are_dcs} hold with $A-n$ replaced by $A+n$.
    
    Let $n \in \N$.  Recall that $T: X \to X$ is a homeomorphism. We claim that $R(x,U) + n = R(T^{-n}x, U) \cap \{n+1,n+2,\ldots\}$.  Indeed, note that $m \in R(x,U) + n$ if and only if $m - n \in R(x,U)$ if and only if $T^{m-n} x \in U$ and $m \geq n+1$ if and only if $T^m T^{-n}x \in U$ and $m \geq n+1$ if and only if $m \in R(T^{-n}x,U) \cap \{n+1, n+2, \ldots\}$.  By \cref{lemma_ds_dcs_modifyable_on_a_finite_set}, the set $R(T^{-n}x,U) \cap \{n+1,n+2,\ldots\}$ is dynamically syndetic.  Therefore, the set $A + n$ is dynamically syndetic, demonstrating \eqref{item_all_translates_are_ds}.
    
    Since $(X,T)$ is minimal and invertible, the system $(X,T^{-1})$ is minimal (see, for example, \cite[Lemma 2.7]{glasscock_koutsogiannis_richter_2019}). We claim that the set $R_{T^{-1}}(x,U)$ is contained in the set in \eqref{eqn_translates_of_ds_set}.  This will finish the proof of \eqref{item_some_translates_are_dcs} and \eqref{item_dcs_translates_of_dcs_are_dcs} since $R_{T^{-1}}(x,U)$ is dynamically syndetic and dynamically central syndetic when $x \in U$.  Suppose $n \in R_{T^{-1}}(x,U)$ so that $T^{-n} x \in U$.  From the previous paragraph, we see that $R(T^{-n}x, U) \cap \{n+1, n+2, \ldots\} = R(x,U) + n \subseteq A +n$.  By \cref{lemma_ds_dcs_modifyable_on_a_finite_set}, we have that the set $A + n$ is dynamically central syndetic, whereby $n$ belongs to the set in \eqref{eqn_translates_of_ds_set}, as desired.

    Finally, that \eqref{eqn_relationship_between_ds_and_dcs_classes} holds follows immediately from \eqref{item_all_translates_are_ds} and \eqref{item_some_translates_are_dcs} for $A-n$ and $A+n$.
\end{proof}

It is interesting to formulate the conclusions of \cref{lemma_translates_of_dsyndetic_sets} in terms of the family algebra developed in \cref{sec_combinatorics}. Thus,
\begin{enumerate}
    \item the family $\dsyndetic$ is translation invariant, that is, $\dsyndetic \subseteq \{\N\} + \dsyndetic$;
    \item $\dsyndetic \subseteq \dsyndetic + \dcsyndetic$; and
    \item the family $\dcsyndetic$ is idempotent, that is, $\dcsyndetic \subseteq \dcsyndetic + \dcsyndetic$.\\
\end{enumerate}

For the following lemma, recall the dilation notation set out at the beginning of \cref{sec_prelims}.

\begin{lemma}
\label{lemma_dilates_of_dsyndetic_sets}
    Let $A \subseteq \N$ and $k \in \N$. If $A$ is dynamically central syndetic, then so are the sets $kA$ and $A/k$.  If $A$ is dynamically syndetic, then so is the set $kA$.
\end{lemma}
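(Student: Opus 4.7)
The plan is to handle the two constructions $kA$ and $A/k$ separately, building in each case an explicit minimal system in which the desired set arises as a return set.

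\textbf{Plan for $kA$.} I would introduce the ``$k$-step suspension'' of $(X,T)$, namely the system $(Y,S)$ with $Y \defeq X \times \{0,1,\dots,k-1\}$ (giving $\{0,\dots,k-1\}$ the discrete topology) and
\[
    S(y,i) \defeq \begin{cases} (y,i+1) & \text{if } i < k-1, \\ (Ty, 0) & \text{if } i = k-1. \end{cases}
\]
A quick induction shows that $S^n(y,0) = (T^{\lfloor n/k\rfloor} y, n \bmod k)$, so any closed $S$-invariant subset of $Y$ containing some $(y_0, i_0)$ must contain $(T^m y_0, j)$ for every $m \in \N_0$ and every $j \in \{0,\dots,k-1\}$; taking closures and invoking the minimality of $(X,T)$, this closed set must be all of $Y$. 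Hence $(Y,S)$ is minimal. Now, given $x \in X$ and a nonempty open $U \subseteq X$ with $R_T(x,U) \subseteq A$, the set $V \defeq U \times \{0\}$ is open in $Y$ and the formula for $S^n$ gives
\[
    R_S\big((x,0), V\big) = \big\{kn \ \big| \ T^n x \in U\big\} = k \cdot R_T(x,U) \subseteq kA.
\]
This exhibits $kA$ as dynamically syndetic. If moreover $x \in U$, then $(x,0) \in V$, and the same inclusion shows $kA$ is dynamically central syndetic.

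\textbf{Plan for $A/k$.} Suppose $A$ is dynamically central syndetic and fix $R_T(x,U) \subseteq A$ with $x \in U$ and $(X,T)$ minimal. By \cref{lemma_may_assume_invertible}, I can assume $T$ is a homeomorphism. I would then show that $x$ lies in some $T^k$-minimal subsystem $Y' \subseteq X$. To produce $Y'$, apply Zorn's lemma to $(X,T^k)$ to obtain any $T^k$-minimal subset $Z$. The set $\bigcup_{i=0}^{k-1} T^i Z$ is nonempty, closed, and $T$-invariant, hence equals $X$ by minimality of $(X,T)$. A short check (using that $T$ is a homeomorphism commuting with $T^k$) shows that each $T^i Z$ is itself $T^k$-minimal. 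Pick $i_0$ with $x \in T^{i_0} Z$ and set $Y' \defeq T^{i_0} Z$; then $(Y', T^k)$ is minimal and contains $x$. Take $V \defeq U \cap Y'$, which is open in $Y'$ and contains $x$. Then
\[
    R_{T^k|_{Y'}}(x,V) = \big\{n \in \N \ \big| \ T^{kn} x \in U\big\} = R_T(x,U)/k \subseteq A/k,
\]
exhibiting $A/k$ as dynamically central syndetic.

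\textbf{Main obstacles.} The main technical point in each case is the verification that the ambient system is minimal. For $kA$, this is the routine suspension argument above. For $A/k$, the subtlety is that $x$ need not be uniformly recurrent under $T^k$ in any \emph{a priori} evident way, so the pigeonhole-style decomposition $X = \bigcup_i T^i Z$ is what makes the argument go through; this is also the reason I invoke \cref{lemma_may_assume_invertible} to pass to an invertible model, so that $T^i Z$ makes sense and inherits $T^k$-minimality. Note that $A/k$ need not be dynamically syndetic when $A$ is only dynamically syndetic (for instance, $A = 2\N - 1$ is dynamically syndetic but $A/2 = \emptyset$), which is consistent with the statement of the lemma.
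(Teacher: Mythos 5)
Your proposal is correct, and the two halves compare differently with the paper's proof. For $kA$ you use exactly the paper's suspension system $(X \times \{0,\dots,k-1\}, S)$ and the identity $R_S((x,0), U\times\{0\}) = k\,R_T(x,U)$; the only (harmless) difference is that you also treat the merely dynamically syndetic case directly with the same construction, whereas the paper reduces it to the central case by translating via \cref{lemma_translates_of_dsyndetic_sets} — your route is a touch more self-contained, the paper's reuses machinery it already has. For $A/k$ you genuinely diverge: the paper quotes the known fact that a uniformly recurrent point for $T$ is uniformly recurrent for $T^k$, so that $(\overline{T^{k\N_0}x}, T^k)$ is minimal and contains $x$; you instead prove from scratch that $x$ lies in some $T^k$-minimal set, by taking a $T^k$-minimal $Z$ from Zorn's lemma, observing that $\bigcup_{i=0}^{k-1} T^iZ$ is closed, nonempty and $T$-invariant, hence all of $X$, and that each $T^iZ$ is $T^k$-minimal. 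This is in effect a self-contained proof of the cited fact in the minimal setting, at the cost of a slightly longer argument; the paper's citation buys brevity. Two small remarks: your appeal to \cref{lemma_may_assume_invertible} is not actually needed, since $T^i\colon Z \to T^iZ$ is a factor map of $(Z,T^k)$ onto $(T^iZ,T^k)$ and factors of minimal systems are minimal, so $T^iZ$ is $T^k$-minimal without invertibility; and in the suspension minimality check, the forward orbit of $(y_0,i_0)$ only reaches $(T^m y_0, j)$ for $m\geq 1$ (plus $j \geq i_0$ when $m=0$), but this suffices for your closure argument, so the conclusion stands.
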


\begin{proof}
    Let $A \in \dcsyndetic$. Let $(X, T)$ be a minimal system, $x \in X$, and $U \subseteq X$ be an open neighborhood of $x$ such that $A \supseteq R(x, U)$. We will show that $k A \in \dcsyndetic$ by considering a suspension system with base $(X, T)$. More precisely, let $Y = X \times \{0, 1, \ldots, k - 1\}$, and define $S: Y \to Y$ by 
    \[
        S(x', \eta) = \begin{cases} (x', \eta + 1) \text{ if } \eta \leq k - 2, \\
        (Tx', 0) \text{ if } \eta = k - 1.
        \end{cases}
    \]
    It is easy to see that the orbit of every point in $Y$ is dense, whereby $(Y, S)$ is minimal.  Also, we see that $k R_T(x, U) = R_S ((x, 0), U \times \{0\}) \in \dcsyndetic$, as desired.

    To see that $A/k \in \dcsyndetic$, we appeal to a well-known fact (cf. \cite[Cor. 3.2]{Ye-D_function}): if a point in a system is uniformly recurrent under the transformation, then it is uniformly recurrent under every power of the transformation.  Since $(X,T)$ is minimal, the point $x$ is uniformly recurrent under $T$, and hence uniformly recurrent under $T^k$.  Therefore, the system $(\overline{T^{k \N_0} x}, T^k)$ is minimal, contains $x$, and $A/k \supseteq R_{T^k}(x, U \cap \overline{T^{k \N_0} x}) \in \dcsyndetic$, as desired.
    
    Suppose $A \in \dsyndetic$. By \cref{lemma_translates_of_dsyndetic_sets} \eqref{item_some_translates_are_dcs}, there exists $n \in \N$ such that $A - n \in \dcsyndetic$. By the first paragraph of this proof, we have $k(A - n) \in \dcsyndetic$. Therefore, by \cref{lemma_translates_of_dsyndetic_sets} \eqref{item_all_translates_are_ds}, we see $k A \supseteq k(A - n) + kn \in \dsyndetic$, whereby $kA \in \dsyndetic$, as desired.
\end{proof}

\subsection{Symbolic characterizations}

We show in this section that the family of dynamically (central) syndetic sets is not affected by requiring return time sets to come from zero-dimensional systems.  This is accomplished by considering the indicator functions $1_A$ of sets $A \subseteq \N$ in the symbolic space $\{0,1\}^{\N}$.

As a warm-up to the challenges that arise in \cref{thm:simple_equivalent_dcS}, it is worth noting that when $(X,T)$ is a minimal system, $x \in X$, and $U \subseteq X$ is nonempty and open, the point $1_{R(x,U)} \in \{0,1\}^{\N}$ is not always uniformly recurrent under the shift map $\sigma$, as the following example demonstrates.

\begin{example}
\label{ex:never_repeat}
    Let $\alpha \in (0, 1)$ be irrational.  Let $X \defeq \R / \Z$ and $T: X \to X$ be defined by $T(x) = x + \alpha$.  Let $U \defeq X \setminus \{\alpha\}$ and $x \defeq 0$. We see that
    \[
        1_{R(0,U)} = (1_{U}(T^n x))_{n \in \N} = (1_U( \alpha), 1_U(2 \alpha), 1_U(3 \alpha), \ldots) = (0, 1, 1, \ldots).
    \]
    Since $\alpha$ is irrational, for all $n \geq 2$, $T^n \alpha \neq \alpha$. Therefore, the function $1_{R(0,U)}$ takes the value $0$ only at $1$.  The point $1_{R(0,U)} \in [0]_1$ is not uniformly recurrent because it never returns to the neighborhood $[0]_1$.
\end{example}

In the following theorem, point \eqref{item:very_strongly_central} says that the set $A$ is \emph{very strongly central}, according to the terminology in \cite[Def. 2.10]{bergelson_hindman_strauss_2012}.

\begin{theorem}
\label{thm:simple_equivalent_dcS}
    Let $A \subseteq \N$. The following statements are equivalent.
    \begin{enumerate}
        \item \label{item:dcsyndetic} The set $A$ is dynamically central syndetic.  

        \item \label{item:very_strongly_central}
        There exists a minimal system $(X,T)$, a nonempty, open set $U \subseteq X$, and a point $x \in \overline{U}$ such that $R(x,U) \subseteq A$.

        \item \label{item:1_A_0_uniformly_recurrent} There exists a subset $B \subseteq A$ such that the point $1_{B \cup \{0\}}$ is uniformly recurrent in the full shift $(\{0, 1\}^{\N_0}, \sigma)$. In particular, the system $(X \defeq \overline{\sigma^{\N_0} 1_{B \cup \{0\}}}, \sigma)$ is minimal, the set $X \cap [1]_0$ is a clopen neighborhood of $1_{B \cup \{0\}}$, and $B = R(1_{B \cup \{0\}}, X \cap [1]_0) \subseteq A$.
    \end{enumerate}
\end{theorem}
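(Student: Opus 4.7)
My plan is to prove the cycle (1)~$\Rightarrow$~(2)~$\Rightarrow$~(3)~$\Rightarrow$~(1). The implication (1)~$\Rightarrow$~(2) is immediate from $x \in U \subseteq \overline{U}$. For (3)~$\Rightarrow$~(1), the explicit witness is built into the statement of (iii): the orbit closure $X := \overline{\sigma^{\N_0} 1_{B \cup \{0\}}}$ in the full shift is minimal by the uniform recurrence of $1_{B \cup \{0\}}$, the cylinder $U := X \cap [1]_0$ is a clopen neighborhood of $1_{B \cup \{0\}}$ (its $0$th coordinate being $1$), and the return-time set $R_\sigma(1_{B \cup \{0\}}, U)$ equals $B \subseteq A$.

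The substantive direction is (2)~$\Rightarrow$~(3). By \cref{lemma_may_assume_invertible} I may assume $T$ is invertible. My strategy is to replace $U$ by a smaller open set $U' \subseteq U$ whose boundary avoids the proper forward orbit $T^{\N} x$ while still satisfying $x \in \overline{U'}$, to produce a minimal idempotent $p \in \beta \N$ with $T^p x = x$ and $R(x, U') \in p$ via the Ellis--Numakura theorem, and to take $z := \sigma^p 1_{R(x, U') \cup \{0\}}$ as the required uniformly recurrent point. To choose $U'$: if $x \in U$, I take $U' := B_r(x)$ for a generic small $r$ with $\overline{U'} \subseteq U$, so that the sphere $\partial U'$ misses the countable set $\{T^n x : n \geq 1\}$; if $x \in \partial U$, I take $U' := B_r(y_0) \cap U$ for $y_0 \in U$ close to $x$ and $r := d(x, y_0)$, chosen generically so that the sphere $\partial B_r(y_0)$ (which passes through $x$) is disjoint from $T^{\N} x \setminus \{x\}$.

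The ultrafilter $p$ will be produced as an idempotent of the nonempty closed subsemigroup
\[
    K \;:=\; L \,\cap\, \overline{R(x, U')} \,\cap\, \bigcap_{V \ni x \text{ open}} \overline{R(x, V)}
\]
of a fixed minimal left ideal $L \subseteq \beta \N$, where $\overline{S} := \{q \in \beta \N : S \in q\}$. Nonemptyness of $K$ follows from compactness of $\beta \N$ and the finite intersection property, since each finite subintersection reduces to $L \cap \overline{R(x, V_1 \cap \cdots \cap V_k \cap U')}$ and $V_1 \cap \cdots \cap V_k \cap U'$ is open and nonempty (because $x \in \overline{U'}$), so that $R(x, V_1 \cap \cdots \cap V_k \cap U')$ is syndetic and meets the minimal left ideal $L$. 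The subsemigroup property follows from invertibility of $T$: for $p_1, p_2 \in K$, any $W \in \{U'\} \cup \{V : V \ni x \text{ open}\}$, and any $n \in R(x, W)$, one has $T^{p_2}(T^n x) = T^n(T^{p_2} x) = T^n x \in W$, hence $R(T^n x, W) \in p_2$, so $R(x, W) \subseteq \{n : R(x, W) - n \in p_2\} \in p_1$, giving $R(x, W) \in p_1 + p_2$. The Ellis--Numakura theorem then yields an idempotent $p \in K$, automatically minimal since $p \in L$.

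Finally, $z := \sigma^p 1_{R(x, U') \cup \{0\}}$ is uniformly recurrent by \cite[Thm.~19.23]{hindman_strauss_book_2012}, and the identity $z(n) = 1 \Leftrightarrow R(T^n x, U') \in p$ drives the endgame. For $n = 0$ this holds because $R(x, U') \in p$ by construction. For $n \geq 1$, the condition implies $T^n x \in \overline{U'}$ by the general characterization of $p$-limits and open sets; the crucial arrangement $\partial U' \cap T^{\N} x = \emptyset$ then upgrades $T^n x \in \overline{U'}$ to $T^n x \in U'$, and hence $n \in R(x, U') \subseteq R(x, U) \subseteq A$. Openness of $U'$ gives the converse $T^n x \in U' \Rightarrow z(n) = 1$, so $B := \{n \in \N : z(n) = 1\} = R(x, U')$ yields $B \subseteq A$ with $1_{B \cup \{0\}} = z$ uniformly recurrent, verifying (3). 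The principal technical obstacle I expect is the construction of $U'$ in the case $x \in \partial U$, specifically showing that a generic $y_0 \in U$ close to $x$ produces $U' \subseteq U$ open with $x \in \overline{U'}$ and $\partial U' \cap T^{\N} x = \emptyset$; this requires a careful Baire-category or diagonal argument when the orbit $T^{\N} x$ accumulates on $\partial U$ near $x$.
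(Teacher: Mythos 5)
Your cycle (1)~$\Rightarrow$~(2)~$\Rightarrow$~(3)~$\Rightarrow$~(1) is sensibly organized, and the ultrafilter endgame for (2)~$\Rightarrow$~(3) is essentially sound once a suitable $U'$ exists: any $p$ in your set $K$ already satisfies $T^p x = x$ and $R(x,U') \in p$, the identity $z(n)=1 \Leftrightarrow R(T^n x,U')\in p$ is correct, and $\sigma^p$ of any point is uniformly recurrent for $p$ in a minimal left ideal, so in fact you need neither invertibility nor the Ellis--Numakura idempotent (this route is close in spirit to the paper's proof of \cref{lemma_metrizability_doesnt_matter}, whereas the paper's proof of this theorem verifies uniform recurrence of $1_{R(x,V)\cup\{0\}}$ directly by a finite-window argument with auxiliary open sets $W_n$). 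However, there is a genuine gap exactly where you flagged trouble, and it is not merely technical: in the case $x \in \partial U$ your construction $U' := B_r(y_0)\cap U$ with $r := d(x,y_0)$ does not in general satisfy $x \in \overline{U'}$, and without that the whole argument (nonemptiness of $K$, $z(0)=1$, $B\neq\emptyset$) collapses. Concretely, if $X$ is zero-dimensional and $d$ is the standard ultrametric on a subshift --- the central class of examples in this paper --- then every point of the open ball $B_{d(x,y_0)}(y_0)$ lies at distance exactly $d(x,y_0)$ from $x$ (ultrametric isosceles property), so $x \notin \overline{B_r(y_0)}$ for \emph{every} choice of $y_0 \in U$; no genericity in $y_0$ can repair this, because the failure is uniform in $y_0$. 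The sphere-through-$x$ picture implicitly assumes a connectedness that arbitrary compact metric systems do not have.

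The fix requires a different shape for $U'$: one cannot reach a boundary point $x \notin U$ with a single ball, but one can with countably many. The paper takes distinct points $x_k \in U\setminus\{x\}$ with $x_k \to x$, chooses pairwise disjoint balls $B_{\delta_k}(x_k) \subseteq U$ with $\delta_k \to 0$ and each boundary $\partial B_{\delta_k}(x_k)$ disjoint from $T^{\N}x$ (possible since the orbit is countable and there are uncountably many admissible radii), and sets $V := \bigcup_k B_{\delta_k}(x_k)$; then $x \in \overline{V}$, $V \subseteq U$, and $\partial V = \{x\} \cup \bigcup_k \partial B_{\delta_k}(x_k)$ is disjoint from $T^{\N}x$, after first disposing of the periodic case separately (if $x$ is periodic, $X$ is finite by minimality, so $\overline{U}=U$ forces $x \in U$ and $B = n_0\N$ works; this also removes the worry that $T^n x = x \in \partial V$ for some $n \geq 1$). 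With this $V$ in place of your $U'$, your ultrafilter argument goes through verbatim and gives a legitimate alternative to the paper's direct verification.
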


\begin{proof}
    (\ref{item:dcsyndetic}) $\Rightarrow$ (\ref{item:very_strongly_central}) \ This follows by definition.

    (\ref{item:very_strongly_central}) $\Rightarrow$ (\ref{item:1_A_0_uniformly_recurrent}) \ Assume that $A \supseteq R(x, U)$, where $(X, T)$ is a minimal system, $U \subseteq X$ is nonempty and open, and $x \in \overline{U}$.
    
    If $x$ is a periodic point of $(X, T)$, let $n_0 \in \N$ be least such that $T^{n_0} x = x$. Since $(X, T)$ is minimal, $X = \{x, Tx, \ldots, T^{n_0 - 1} x\}$. As $X$ is finite and $x \in \overline{U}$, it must be that $x \in U$. Letting $B = n_0 \N$ be the set of positive multiples of $n_0$, we have that $A \supseteq R(x, U) \supseteq B$ and that $1_{B \cup \{0\}}$ is a periodic, hence uniformly recurrent, point in $\{0, 1\}^{\N_0}$.

    Suppose that $x$ is not a periodic point of $(X,T)$. We claim that there exists a nonempty, open set $V \subseteq U$ such that $x \in \overline{V}$ and the boundary $\partial V$ is disjoint from $T^{\N} x \defeq \{T^n x \ | \ n \in \N\}$.

    If $x \in U$, choose $V = B_\delta(x)$ for some small $\delta > 0$. Note that the boundary of $B_\delta(x)$ is the set $\{z \in X \ | \ d_X(x, z) = \delta\}$, which may be empty. Since there are uncountably many choices for $\delta$, there exists a choice so that the boundary is disjoint from $T^{\N} x$.

    Suppose that $x \in \partial U = \overline{U} \setminus U$.  There exists a sequence of pairwise distinct points $(x_k)_{k \in \N}$ in $U \setminus \{x\}$ such that $\lim_{k \to \infty }x_k = x$.
    Inductively, choose a sequence of disjoint balls $B_{\delta_k}(x_k) \subseteq U$ with $\lim_{k \to \infty} \delta_k = 0$ such that the boundary of each $B_{\delta_k}(x_k)$ has empty intersection with $T^{\N} x$.
    Let $V = \bigcup_{k = 1}^{\infty} B_{\delta_k}(x_k)$. It is clear that $V$ is an open subset of $U$. Since $\lim_{k \to \infty }x_k = x$, we have that $\partial V = \{x\} \cup \bigcup_{k=1}^{\infty} \partial V_k$, and so the boundary of $V$ is disjoint from $T^{\N} x$. Our claim is proved.
    
    Let $B = R(x, V)$. Since $V$ is nonempty and $V \subseteq U$, we have that $B$ is nonempty and that $B \subseteq A$. It remains to show that $1_{B \cup \{0\}}$ is a uniformly recurrent point in $(\{0, 1\}^{\N_0}, \sigma)$.  By the topology on the space $\{0, 1\}^{\N_0}$, we must show that for all $L \in \N$, there exists a syndetic set of positive integers $m$ such that
    \begin{align}
    \label{eq:syndetic_return_condition}
        \text{for all $i \in \{0, \ldots, L\}$}, \ 1_{B \cup \{0\}}(m+i) = 1_{B \cup \{0\}}(i).
    \end{align}
    Let $L \in \N$.  As $\partial V \cap T^{\N} x = \emptyset$, for every $n \in \N$, the point $T^n x$ is either in $V$ or in $X \setminus \overline{V}$.
    For $n \in \{1, \ldots, L\}$, define
    \[
        W_n = \begin{cases} V & \text{ if } T^n x \in V \\ X \setminus \overline{V} & \text{ if } T^n x \in X \setminus \overline{V}. \end{cases}
    \]
    It follows that $x \in \bigcap_{n = 1}^L T^{-n} W_n$. Since each $W_n$ is open, the set $\bigcap_{n = 1}^L T^{-n} W_n$ is an open neighborhood of $x$. Because $x \in \overline{V}$, the set $V \cap \bigcap_{n = 1}^L T^{-n} W_n$ is nonempty and open. Since $(X, T)$ is minimal, the set
    \[R \left( x, V \cap \bigcap_{n = 1}^L T^{-n} W_n \right)\]
    is syndetic, and all integers $m$ in this set satisfy the condition in \eqref{eq:syndetic_return_condition}. This demonstrates that $1_{B \cup \{0\}}$ is uniformly recurrent under the shift, as desired.
    
    (\ref{item:1_A_0_uniformly_recurrent}) $\Rightarrow$ (\ref{item:dcsyndetic}) \ Suppose $A$ contains a subset $B$ for which $1_{B \cup \{0\}}$ is uniformly recurrent in $(\{0, 1\}^{\N_0}, \sigma)$. 
    The system $(X \defeq \overline{\sigma^{\N_0} 1_{B \cup \{0\}}}, \sigma)$ is minimal, and the cylinder set $X \cap [1]_0$ is open and contains $1_{B \cup \{0\}}$.  Since $A \supseteq B = R(1_{B \cup \{0\}}, X \cap [1]_0)$, we have that $A$ is dynamically central syndetic, as desired.
\end{proof}

A version of the following theorem is given in \cite[Prop. 2.3]{huang_ye_2005}.  The derivation from \cref{thm:simple_equivalent_dcS} is short, so we give it here.

\begin{theorem}
\label{thm:simple_equivalent_dS}
    Let $A \subseteq \N$.  The following statements are equivalent.
    \begin{enumerate}
        \item \label{item:dsyndetic_one} The set $A$ is dynamically syndetic.
        
        \item \label{item:1_A_uniform_recurrent_one} There exists a nonempty subset $B \subseteq A$ for which the point $1_B$ is uniformly recurrent in the full shift $(\{0, 1\}^{\N},\sigma)$.  Moreover, when $B$ is considered as a subset of $\N_0$, the point $1_B$ is uniformly recurrent in the full shift $(\{0, 1\}^{\N_0},\sigma)$, the system $(X \defeq \overline{\sigma^{\N_0} 1_B}, \sigma)$ is minimal, the set $X \cap [1]_0$ is nonempty and clopen, and $B = R(1_{B}, X \cap [1]_0) \subseteq A$.
    \end{enumerate}
\end{theorem}

\begin{proof}
    (\ref{item:dsyndetic_one}) $\Rightarrow$ (\ref{item:1_A_uniform_recurrent_one}) \ Suppose $A$ is dynamically syndetic. By \cref{lemma_translates_of_dsyndetic_sets}, there exists $k \in \N$ such that $A + k$ is dynamically central syndetic. By \cref{thm:simple_equivalent_dcS}, there exists a set $\tilde B \subseteq A + k$ such that $1_{\tilde B \cup \{0\}}$ is uniformly recurrent in $(\{0, 1\}^{\N_0},\sigma)$. Note that $B \defeq \tilde B - k \subseteq A$.  Since $1_{\tilde B \cup \{0\}}$ is uniformly recurrent in $(\{0, 1\}^{\N_0},\sigma)$, we have that $1_{\tilde B}$ is uniformly recurrent in $(\{0, 1\}^{\N},\sigma)$.  Applying $\sigma^k$, we have that $\sigma^k 1_{\tilde B} = 1_{B}$ is uniformly recurrent, as was claimed.
    
    Since $1_{\tilde B \cup \{0\}}$ is uniformly recurrent in $(\{0, 1\}^{\N_0},\sigma)$, we have that $\sigma^k 1_{\tilde B \cup \{0\}} = 1_{B}$, where $B$ is interpreted as a subset of $\N_0$, is uniformly recurrent in $(\{0, 1\}^{\N_0},\sigma)$.  Thus, the system $(X,\sigma)$ is minimal.  Since the set $\tilde B$ is infinite (in fact, it is syndetic), the set $B$ is nonempty, and hence, by the definition of $X$, the set $X \cap [1]_0$ is nonempty.  Finally, that $B = R(1_{B}, X \cap [1]_0)$ is straightforward to check.

    (\ref{item:1_A_uniform_recurrent_one}) $\Rightarrow$ (\ref{item:dsyndetic_one}) \ 
    Suppose $A$ contains a nonempty subset $B$ for which $1_{B}$ is uniformly recurrent in $(\{0, 1\}^{\N},\sigma)$. Let $k = \min B$. Since $1_{B}$ is uniformly recurrent, so is $\sigma^k 1_B = 1_{B-k}$.  Since $k \in B$, we have that $1_{(B-k) \cup \{0\}}$ is uniformly recurrent in $\{0, 1\}^{\N_0}$. By \cref{thm:simple_equivalent_dcS}, the set $B - k \in \dcsyndetic$. Since $A - k \supseteq B - k$, the set $A - k$ is dynamically central syndetic.  By \cref{lemma_translates_of_dsyndetic_sets}, the set $A \supseteq (A-k) + k$ is dynamically syndetic, as desired.
\end{proof}

\section{Central and dynamically central syndetic sets}
\label{sec_central_and_dy_central_sets}

In this section, we prove \cref{thm_dcs_iff_cs}, that dynamically central syndetic sets are characterized by membership in syndetic, idempotent filters.  This is part of the statement of \cref{intro_mainthm_full_chars_of_dcsyndetic}. The following definition will help streamline the discussion and results.

\begin{definition}
\label{def_central_syndetic}
    A set $A \subseteq \N$ is a \emph{central syndetic} if it is a member of a syndetic, idempotent filter on $\N$.
\end{definition}

Let us inject a historical remark to motivate the nomenclature. Central sets -- described first by Furstenberg  \cite[Def. 8.3]{furstenberg_book_1981} in a dynamical context -- have since been characterized as members of minimal, idempotent ultrafilters \cite{bergelson_hindman_1990,glasner_1980,shi_yang_1996}. 
To discuss the dynamical and combinatorial families separately, it has become customary (see, eg., \cite[Sec. 19.3]{hindman_strauss_book_2012}) to call Furstenberg's sets ``dynamically central'' and members of minimal, idempotent ultrafilters ``central.''  We take a similar approach here: ``dynamically central syndetic'' sets are defined dynamically as above, while ``central syndetic'' sets are defined combinatorially as members of syndetic, idempotent filters.  We will demonstrate in \cref{thm_dcs_iff_cs} that these families are the same.

To further motivate the nomenclature, it would be natural to show that central syndetic sets are, in fact, central. Here is one way to see it: central syndetic sets are dynamically central syndetic (\cref{intro_mainthm_full_chars_of_dcsyndetic}); dynamically central syndetic sets are dynamically central (from the definitions); and dynamically central sets are central (as mentioned in the previous paragraph). We give a direct argument in Section 6.1 of our companion paper \cite{glasscock_le_2025}.

\begin{theorem}
\label{thm_dcs_iff_cs}
    A set $A \subseteq \N$ is dynamically central syndetic if and only if it is central syndetic, that is, a member of a syndetic, idempotent filter on $\N$.
\end{theorem}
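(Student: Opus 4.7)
The plan is as follows. The \emph{only if} direction reduces to verifying that the canonical filter associated to a return time set from a minimal system is syndetic and idempotent. Let $(X,T)$ be a minimal system, $x \in X$, and $U$ an open neighborhood of $x$ with $R(x,U) \subseteq A$. Set
\[\filter \defeq \upclose \big\{ R(x,V) \ \big| \ V \subseteq X \text{ open}, \ x \in V \big\}.\]
Minimality of $(X,T)$ makes each $R(x,V)$ syndetic. The identity $R(x,V_1) \cap R(x,V_2) = R(x, V_1 \cap V_2)$ shows $\filter$ is a filter. For idempotency, observe that if $n \in R(x,V)$ then $x \in T^{-n}V$, so $R(x,V) - n = R(x, T^{-n}V) \in \filter$; hence $R(x,V) \subseteq \{n \in \N : R(x,V) - n \in \filter\}$, giving $R(x,V) \in \filter + \filter$ and $\filter \subseteq \filter + \filter$. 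Since $A \in \filter$, the forward direction follows.

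For the \emph{if} direction, suppose $A$ belongs to a syndetic, idempotent filter $\filter$. By \cref{thm_conditions_for_central_syndetic}, the subset $B \defeq A \cap (A - \filter) \subseteq A$ satisfies the combinatorial condition in \eqref{eqn_def_of_cssd}, and by \cref{lemma_upgraded_cssd_property} it satisfies the stronger condition \eqref{eqn_stronger_than_cssd}. Since $\dcsyndetic$ is upward closed, it suffices to exhibit a dynamically central syndetic subset of $B$.

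The naive attempt --- invoke \cref{thm:simple_equivalent_dcS} to argue that $1_{B \cup \{0\}}$ is uniformly recurrent in the full shift $(\{0,1\}^{\N_0}, \sigma)$ --- fails in general, as the 0's in this sequence are not forced to return syndetically by \eqref{eqn_def_of_cssd} (cf. \cref{ex:never_repeat}). The approach, sketched in \cref{sec_intro_into_the_proof}, is to replace $\sigma$ by a modified transformation $S = \pi \circ \sigma$, where $\pi: \{0,1\}^{\N_0} \to \{0,1\}^{\N_0}$ is a carefully designed continuous ``punch'' map that selectively alters coordinates so as to erase the recurrence obstructions at each scale while preserving the 1's at positions in $B \cup \{0\}$. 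Condition \eqref{eqn_stronger_than_cssd}, with its intersection against $m\N$ for arbitrarily large $m$, controls the scale hierarchy of the punch: at scale $m$, the syndetic set $B \cap \bigcap_{f \in F}(B - f) \cap m\N$ supplies an abundant reservoir of positions where the required finite pattern of 1's reappears inside the shift-punch trajectory of $1_{B \cup \{0\}}$. The resulting ``shift-punch'' system makes $1_{B \cup \{0\}}$ uniformly recurrent, so $Y \defeq \overline{S^{\N_0} 1_{B \cup \{0\}}}$ is a minimal subsystem containing $1_{B \cup \{0\}}$ in the clopen set $Y \cap [1]_0$.

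The final step is to extract from $(Y, S)$ a dynamically central syndetic subset of $B$: $S$-return times of $1_{B \cup \{0\}}$ to $Y \cap [1]_0$ are, by the design of the punch (which preserves the 1's at elements of $B \cup \{0\}$), a subset of $B$, and this subset is dynamically central syndetic from the minimal system $(Y, S)$. The principal obstacle is the simultaneous construction of $\pi$ meeting all three competing requirements: continuity on $\{0,1\}^{\N_0}$, fidelity to the combinatorial pattern of $B$ (so that $S$-returns land in $B$), and compatibility with the scale hierarchy needed to force uniform recurrence. Condition \eqref{eqn_stronger_than_cssd} is the combinatorial input that makes this tri-constrained construction possible, and carrying it out is the main technical content alluded to in \cref{sec_main_idea_of_shift_punch}.
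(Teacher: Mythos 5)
Your ``only if'' direction is correct and is exactly the paper's argument: the upward closure of $\{R(x,V) : V \ni x \text{ open}\}$ is a syndetic filter by minimality, and the computation $R(x,V)-n = R(x,T^{-n}V)$ for $n \in R(x,V)$ gives idempotency. Your reduction of the ``if'' direction via \cref{thm_conditions_for_central_syndetic} and \cref{lemma_upgraded_cssd_property} to a set satisfying \eqref{eqn_stronger_than_cssd} also matches the paper. The gap is in what follows: the entire content of the hard direction is the claim that a continuous ``punch'' map can be built so that the punched point becomes uniformly recurrent, and you assert this rather than prove it. Saying that \eqref{eqn_stronger_than_cssd} ``supplies an abundant reservoir of positions where the required finite pattern reappears'' is a restatement of the goal, not an argument; the actual work --- showing that the times at which long punches occur form a syndetic set (\cref{prop_long_punch_times_are_syndetic}) and relating those times to return times of the relevant point (\cref{lemma_punch_windows_and_return_times}, \cref{lemma_punch_returns_vs_shift_returns}) --- is nowhere carried out or even reduced to a precise statement in your proposal.

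Moreover, the specific formulation you propose, a single map $S = \pi \circ \sigma$ on $\{0,1\}^{\N_0}$, is the naive outline that the paper explicitly abandons in \cref{sec_main_idea_of_shift_punch}: with punches of unconstrained length, successive punch intervals can overlap in ways that are very hard to control, and it is unclear that your three competing requirements (continuity, fidelity to $B$, uniform recurrence) can be met simultaneously on the symbolic space alone. The paper resolves this by making the shift-punch a skew product over the 2-adic odometer, using $\nu_2$ of the base coordinate to restrict punches to dyadic intervals, so that intersecting punch windows are nested (\cref{lemma_basic_dyadic_cubes_facts}, \cref{lemma_fractal_windows}); this is precisely where the $m\N$ clause of \eqref{eqn_stronger_than_cssd} enters, via the inductive dyadic argument in \cref{prop_long_punch_times_are_syndetic}. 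Your proposed endgame (extract the dynamically central syndetic subset as the $S$-return times of the punched point to $[1]_0$ in the minimal shift-punch orbit closure, using \cref{lemma_dynamical_description_of_B}-type bookkeeping) is a legitimate variant --- the paper remarks that $(0,1_A)$ is indeed uniformly recurrent under $\kappa$, whereas its official route passes instead to uniform recurrence of $1_B$ under the ordinary shift and invokes \cref{thm:simple_equivalent_dcS} --- but either way the uniform recurrence must be proved, and that proof, together with a workable definition of the punch, is missing from your proposal.
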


\begin{proof}[Proof of the ``only if'' statement in \cref{thm_dcs_iff_cs}]
    By assumption, there exists a minimal system $(X,T)$, a point $x \in X$, and an open set $U \subseteq X$ containing $x$ such that $R(x,U) \subseteq A$. Define
    \[
        \filter \defeq \upclose \big\{ R(x,V) \ \big| \ V \subseteq X \text{ is an open set containing $x$} \big\}.
    \]
    We claim that $\filter$ is a syndetic, idempotent filter containing $A$.

    That $\filter$ is a filter, that $A \in \filter$, and that $\filter$ is syndetic are all immediate.  To see that $\filter$ is idempotent, it suffices by \cref{rmk_showing_idempotency} to show that for all open $V \subseteq X$ with $x \in V$, we have that $R(x,V) \subseteq R(x,V) - \filter$. This is the containment shown in \eqref{eqn_inequality_to_show_idempotency} in \cref{sec_intro_algebra_of_filters}.
\end{proof}

The rest of this section is devoted to a proof of the ``if'' direction of \cref{thm_dcs_iff_cs}, namely that central syndetic sets are dynamically central syndetic.

\subsection{A combinatorial characterization of central syndetic sets}
\label{sec_combo_condition_for_central_syndetic}

In this subsection, we give a combinatorial characterization of central syndetic sets.  Consider the following statement for a set $B \subseteq \N$:
\begin{align}
    \label{eqn_def_of_cssd}
    \text{for all finite $F \subseteq B$, \ the set } B \cap \bigcap_{f \in F} \big( B - f \big) \text{ is syndetic}.
\end{align}
The following theorem shows that the condition in \eqref{eqn_def_of_cssd} captures membership in a syndetic, idempotent filter.

\begin{theorem}
\label{thm_conditions_for_central_syndetic}
    Let $A \subseteq \N$.
    \begin{enumerate}
        \item \label{item_central_syndetic_implies_comb_condition}
        If $A$ is central syndetic and $\filter$ is a syndetic, idempotent filter to which $A$ belongs, then the set $A \cap (A-\filter)$ satisfies the condition in \eqref{eqn_def_of_cssd}.

        \item \label{item_comb_condition_implies_central_syndetic}
        If $A$ satisfies the condition in \eqref{eqn_def_of_cssd}, then the family
        \[\filter \defeq \bigupclose \bigg\{ A \cap \bigcap_{f \in F} (A - f) \ \bigg | \ F \subseteq A \text{ is finite} \bigg\}\]
        is a syndetic, idempotent filter that contains $A$, whereby $A$ is central syndetic.
    \end{enumerate}
    In particular, the set $A$ is central syndetic if and only if some subset of it satisfies the condition in \eqref{eqn_def_of_cssd}.
\end{theorem}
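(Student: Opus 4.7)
The plan is to prove the two directions separately, in each case establishing the syndeticity conclusion by showing that the relevant set already belongs to a suitable syndetic, idempotent filter. Throughout I will use only the family algebra from Lemma~2.6 and \cref{rmk_showing_idempotency} (Remark~2.19).

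For part \eqref{item_central_syndetic_implies_comb_condition}, let $\filter$ be a syndetic, idempotent filter with $A\in\filter$, and set $B:=A\cap(A-\filter)$. I will prove the stronger statement that $B\cap\bigcap_{f\in F}(B-f)\in\filter$ for every finite $F\subseteq B$, which forces syndeticity. Since $A\in\filter$ and $\filter$ is idempotent, $A-\filter\in\filter$, so $B\in\filter$. For $f\in B$ we have $f\in A-\filter$, hence $A-f\in\filter$, and a second use of idempotency gives $(A-f)-\filter\in\filter$. By Lemma~2.6 \eqref{item_family_prop_three}--\eqref{item_family_prop_four},
\[
B-f=(A-f)\cap\bigl((A-\filter)-f\bigr)=(A-f)\cap\bigl((A-f)-\filter\bigr)\in\filter,
\]
and closing under finite intersection inside the filter finishes the argument.

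For part \eqref{item_comb_condition_implies_central_syndetic}, assume $A$ satisfies \eqref{eqn_def_of_cssd} and define $\filter$ as in the statement. Three of the four required properties are nearly immediate. Taking $F=\emptyset$ shows $A\in\filter$. The family $\filter$ is upward closed by definition, and it is closed under intersection because the intersection of two basic sets, corresponding to finite $F_1,F_2\subseteq A$, contains the basic set corresponding to $F_1\cup F_2$. And $\filter$ is syndetic because the hypothesis \eqref{eqn_def_of_cssd} applied to $A$ says exactly that every basic set is syndetic.

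The only substantive step is idempotency, which by \cref{rmk_showing_idempotency} reduces to showing $B-\filter\in\filter$ for each basic set $B=A\cap\bigcap_{f\in F}(A-f)$; I will in fact prove $B\subseteq B-\filter$, which combined with $B\in\filter$ suffices. Fix $n\in B$, so $n\in A$ and $n+f\in A$ for every $f\in F$; the set $F':=\{n\}\cup(F+n)$ is therefore a finite subset of $A$, so the basic set $A\cap\bigcap_{g\in F'}(A-g)$ belongs to $\filter$. Unpacking the definitions, any $m$ in this set satisfies $m+n\in A$ and $m+n+f\in A$ for each $f\in F$, i.e.\ $m\in B-n$; hence $B-n\in\filter$ by upward closure, as needed. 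The main obstacle is spotting the correct witness $F'=\{n\}\cup(F+n)$ for this last step; everything else is clean bookkeeping with Lemma~2.6 and the upward-closed structure of $\filter$.
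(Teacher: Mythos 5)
Your proof is correct and follows essentially the same route as the paper: in part (i) you show the relevant intersections lie in $\filter$ via idempotency (your direct expansion of $B-f$ is just a rephrasing of the paper's verification that $A\cap(A-\filter)\subseteq (A\cap(A-\filter))-\filter$), and in part (ii) your witness $F'=\{n\}\cup(F+n)$ is the same translation trick the paper uses when it checks $(A-a)-n=A-(n+a)\in\filter$. No gaps.
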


\begin{proof}
    \eqref{item_central_syndetic_implies_comb_condition} \ Suppose $A$ belongs to a syndetic, idempotent filter $\filter$.  Define $A_1 \defeq A \cap (A-\filter)$.  We claim that this set satisfies the condition in \eqref{eqn_def_of_cssd}.

    First we will show that $A_1 \in \filter$ and $A_1 \subseteq A_1 - \filter$.  That $A_1 \in \filter$ follows because $A \in \filter$ and $A-\filter \in \filter$ (since $\filter$ is idempotent).  Because $\filter \subseteq \filter + \filter$, using the algebra in \cref{lemma_filter_algebra}, we see
    \begin{align*}
        A_1 = A \cap (A-\filter) \subseteq A - \filter &\subseteq (A - \filter) \cap \big(A - (\filter + \filter) \big) \\
        &= (A - \filter) \cap \big((A - \filter) - \filter \big) \\
        &= \big( A \cap (A - \filter) \big) - \filter \\
        &= A_1 - \filter.
    \end{align*}

    To see that the condition in \eqref{eqn_def_of_cssd} is satisfied, let $F \subseteq A_1$ be finite.  Since $F \subseteq A_1 - \filter$, the set $\bigcap_{f \in F} (A_1 - f) \in \filter$.  Since $A_1 \in \filter$, we have that $A_1 \cap \bigcap_{f \in F} (A_1 - f) \in \filter$, whereby it is syndetic, as desired.

    \eqref{item_comb_condition_implies_central_syndetic} \ That $\filter$ is a filter is immediate from its definition.  That $A \in \filter$ follows immediately from the fact that $\filter$ is upward closed.

    To see that $\filter$ is syndetic, it suffices to show that for all $F \subseteq A$ finite, the set $B \defeq A \cap \bigcap_{f \in F} (A - f)$ is syndetic.  But this is immediate from \eqref{eqn_def_of_cssd}.

    To see that $\filter$ is idempotent, we must show that $\filter \subseteq \filter + \filter$.  Since $\filter + \filter$ is a filter, it suffices to show that for all $a \in A \cup\{0\}$, the set $A - a \in \filter + \filter$.
    
    Let $a \in A \cup\{0\}$.  We will show that $(A - a) - \filter \in \filter$.  Since $A - a \in \filter$, it suffices to show that $A - a \subseteq (A - a) - \filter$.  This holds since for any $n \in A - a$, we have that $(A - a) - n = A - (n + a)$, which is a member of $\filter$ by the definition of $\filter$ since $n+a \in A$.
\end{proof}

\begin{remark}
    It is a short exercise to show that the condition in \eqref{eqn_def_of_cssd} is equivalent to the following: \emph{there exists a chain of syndetic sets $A \supseteq A_1 \supseteq A_2 \supseteq A_3 \supseteq \cdots $ with the property that for all $n \in \N$ and all $a \in A_n$, there exists $m \in \N$ such that $A_n - a \supseteq A_m$.}  The combinatorial characterization of central sets in \cite[Thm. 14.25]{hindman_strauss_book_2012} says the same thing with with ``syndetic'' replaced by ``collectionwise piecewise syndetic.''
\end{remark}

\begin{lemma}
\label{lemma_lots_of_self_shifts_of_cssd_set}
    If $A \subseteq \N$ is contained in a syndetic, idempotent filter, then for all thick sets $T \subseteq \N$, the set $A \cap T$ is IP.
\end{lemma}

\begin{proof}
    Because the desired conclusion remains unchanged under taking supersets, we may assume without loss of generality that the set $A$ satisfies the condition in \eqref{eqn_def_of_cssd}.
    
    Let $T \subseteq \N$ be thick.  Since $A$ is syndetic and $T$ is thick, there exists $a_1 \in A \cap T$.  By the condition in \eqref{eqn_def_of_cssd}, the set $A \cap (A-a_1)$ is syndetic.  Since $T$ is thick, so is the set $T \cap (T-a_1)$, so the set $A \cap (A-a_1) \cap T \cap (T-a_1)$ is nonempty.  Take $a_2 \in A \cap (A-a_1) \cap T \cap (T-a_1)$.  Since $\{a_1, a_2, a_1+a_2\} \subseteq A$, again by \eqref{eqn_def_of_cssd} and the thickness of $T$, we have that
    \[A \cap (A-a_1) \cap (A-a_2) \cap (A-(a_1+a_2)) \cap T \cap (T-a_1) \cap (T-a_2) \cap (T-(a_1+a_2)) \neq \emptyset.\]
    Keep iterating to find that $\text{FS}(a_1, a_2, \ldots) \subseteq A \cap T$, as desired.
\end{proof}

We finish this subsection by showing that sets $B \subseteq \N$ satisfying the condition in \eqref{eqn_def_of_cssd} satisfy the ostensibly stronger condition:
\begin{align}
    \label{eqn_stronger_than_cssd}
    \text{for all finite $F \subseteq B$ and all $m \in \N$, the set } B \cap \bigcap_{f \in F} \big( B - f \big) \cap m \N \text{ is syndetic}.
\end{align}
This upgrade will help us with the proof of \cref{thm_dcs_iff_cs} in the next section.

\begin{lemma}
\label{lemma_upgraded_cssd_property}
    A set $A \subseteq \N$ satisfies the condition in \eqref{eqn_def_of_cssd} if and only if it satisfies the condition in \eqref{eqn_stronger_than_cssd}.
\end{lemma}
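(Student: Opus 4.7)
One direction, \eqref{eqn_stronger_than_cssd} $\Rightarrow$ \eqref{eqn_def_of_cssd}, is immediate by taking $m=1$.

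For the nontrivial direction, suppose $A$ satisfies \eqref{eqn_def_of_cssd}, fix a finite $F \subseteq A$ and an $m \in \N$, and set $C \defeq A \cap \bigcap_{f \in F}(A-f) \cap m\N$. The plan is to show that $\overline{C} \cap L \neq \emptyset$ for every minimal left ideal $L \subseteq \beta \N$; the syndeticity of $C$ will then follow from \cite[Thm.~4.48]{hindman_strauss_book_2012}.

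The starting point is \cref{thm_conditions_for_central_syndetic} \eqref{item_comb_condition_implies_central_syndetic}, which produces a syndetic, idempotent filter $\filter$ containing $A \cap \bigcap_{f \in F}(A-f)$. By \cref{thm_sifs_are_contained_in_idempotents}, every minimal left ideal $L \subseteq \beta \N$ contains an idempotent ultrafilter $q$ with $\filter \subseteq q$; in particular, $A \cap \bigcap_{f \in F}(A-f) \in q$. I would then verify, as a separate short claim, that $m\N$ lies in every idempotent ultrafilter on $\N$: partitioning $\N$ into the residue classes $m\N + r$ for $r \in \{0, \ldots, m-1\}$ and using that $q$ is an ultrafilter, exactly one such class, say $m\N + r$, lies in $q$; the idempotency relation $q = q + q$ forces $\{n \in \N : (m\N + r) - n \in q\} \in q$, and a direct computation identifies this set with $m\N$ up to a finite modification, which is irrelevant since $q$ is necessarily non-principal. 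Combining these observations gives $C \in q$, so $q \in \overline{C} \cap L$, as needed.

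The main obstacle is resisting a tempting circular approach at the level of filters: one checks quickly using \cref{lemma_idempotent_classcap_is_idempotent} that $\filter \classcap \upclose\{m\N\}$ is an idempotent filter, and one could try to argue directly that it is syndetic, but the syndeticity of a typical generator $A \cap \bigcap_{f \in F}(A-f) \cap m\N$ is precisely what we set out to prove. Passing to $\beta \N$ and invoking the single universal fact that every idempotent ultrafilter contains $m\N$ sidesteps this loop cleanly.
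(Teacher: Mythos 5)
Your proof is correct and follows essentially the same route as the paper's: both rest on \cref{thm_conditions_for_central_syndetic} together with \cref{thm_sifs_are_contained_in_idempotents}, the paper merely packaging the latter as \cref{cor_central_syndetic_is_strongly_central} and certifying syndeticity by intersecting every thick set, while you certify it by meeting every minimal left ideal --- equivalent via the same Hindman--Strauss Theorem 4.48. Your residue-class computation showing $m\N$ belongs to every idempotent ultrafilter is just an inlined proof of the fact the paper invokes as ``$m\N$ is $\IP^*$, hence $\central^*$,'' so no new ingredient is needed and no step is missing.
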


\begin{proof}
    If $A$ satisfies the condition in \eqref{eqn_stronger_than_cssd}, then it clearly satisfies the one in \eqref{eqn_def_of_cssd}.

    Conversely, suppose that $A \subseteq \N$ satisfies the condition in \eqref{eqn_def_of_cssd}. To see that it satisfies \eqref{eqn_stronger_than_cssd}, let $F \subseteq A$ be finite and $m \in \N$. To see that the set $A \cap \bigcap_{f \in F} (A-f) \cap m\N$ is syndetic, we will show that it has nonempty intersection with all thick sets.
    
    Let $H \subseteq \N$ be thick. By \cref{thm_conditions_for_central_syndetic}, we see that the set $A \cap \bigcap_{f \in F} (A-f)$ belongs to a syndetic, idempotent filter, and hence is central syndetic.  It follows from \cref{lemma_lots_of_self_shifts_of_cssd_set} that the set $A \cap \bigcap_{f \in F} (A-f) \cap H$ is IP.  The set $m \N$ is IP$^*$, and so the set $A \cap \bigcap_{f \in F} (A-f) \cap m \N \cap H$ is nonempty, as was to be shown.
\end{proof}

\subsection{Central syndetic sets are dynamically central syndetic}
\label{sec_main_idea_of_shift_punch}

Here is the main idea behind the proof of the ``if'' statement in \cref{thm_dcs_iff_cs}.  By \cref{thm_conditions_for_central_syndetic}, we may assume that the set $A$ satisfies the condition in \eqref{eqn_def_of_cssd}.

Our aim is to show that the set $A$ contains a set of the form $R(x,U)$, where the point $x$ belongs to the open set $U$ in a minimal system.  In view of \cref{thm:simple_equivalent_dcS}, it suffices to find a subset $B \subseteq A$ for which $1_{B \cup \{0\}} \in \{0, 1\}^{\N_0}$ is uniformly recurrent under the shift map $\sigma$.  Note that the condition in \eqref{eqn_def_of_cssd} is not sufficient for $1_{A \cup \{0\}}$ to be uniformly recurrent under $\sigma$.  (Indeed, the set $\N_0 \setminus \{n^2 \ | \ n \in \N\}$ satisfies the condition in \eqref{eqn_def_of_cssd} but has an indicator function that is not uniformly recurrent under $\sigma$.)  To find the set $B$, the idea is to modify the shift map $\sigma$ into a ``shift-punch'' map $\kappa$ under which the point $1_A$ becomes uniformly recurrent.

We append the integer $0$ to $A$ and consider it as a subset of $\N_0$. The ``punch map'' $\pi: \{0,1\}^{\N_0} \to \{0,1\}^{\N_0}$, which depends on the set $A$, is described roughly as follows.  At $1_C \in \{0,1\}^{\N_0}$, we find the maximal $N \in \N$ such that $A \cap [N] \subseteq C$.  Then, $\pi(1_C) = 1_D$, where $D \cap [N] = A \cap [N]$ and $D \cap (\N_0 \setminus [N]) = C \cap (\N_0 \setminus [N])$.  Roughly, the map $\pi$ ``punches out'' or ``deletes'' the elements of $C$ on the interval $[N]$ that are not in $A \cap [N]$.  The map $\pi$ continuously moves points of $\{0,1\}^{\N_0}$ closer to $1_A$.

The shift-punch map $\kappa$ is defined to be $\pi \circ \sigma$.  The dynamics of the shift-punch system $(\{0,1\}^{\N_0}, \kappa)$ is thus a shift followed by a punch.  The idea is to show that in this system, the point $1_A$ is uniformly recurrent.  This happens because the set $A$ satisfies the condition in \eqref{eqn_def_of_cssd} and thus syndetically often contains an initial interval of itself.  Keeping track of which elements of $A$ are deleted as the shift-punch map is repeatedly applied to $1_A$, we end up with a subset $B \subseteq A$. We will show that the point $1_B \in \{0,1\}^{\N_0}$ is uniformly recurrent under the usual shift map, as desired.

The outline just given is difficult to control in practice because the punches can seemingly overlap in complicated ways.  To overcome this, we find it useful to have more control over the length of intervals on which the punches occurs.  Thus, the actual shift-punch system is defined as a skew product over a 2-adic odometer.  The odometer functions only to control the length of the punch intervals, which are restricted to occur only on dyadic intervals.  Dyadic intervals are convenient because a nonempty intersection implies containment.  This control on the punch intervals allows us to realize the outline above.

\subsubsection{Notation and spaces}
\label{sec_notation_and_spaces}

In the rest of this section, we will alternate between working in $\N$ and $\N_0 \defeq \{0,1,2,\ldots\}$.  Recall that for $N \in \N_0$, we write $[N] \defeq \{0, \ldots, N-1\}$. When $A \subseteq \N_0$, we compute the translate $A-n$ as a subset of $\N_0$.

It will be convenient to append some elements to $\N_0$.  Thus, we define $\N_{0,\pm \infty} \defeq \N_0 \cup \{-\infty, \infty\}$.  Subspaces, such as $\N_{0,\infty} \defeq \N_0 \cup \{\infty\}$, are defined analogously.  We give $\N_{0,\pm \infty}$ the topology that makes the bijection $\N_{0,\pm \infty} \to \{2\} \cup \{1/n \ | \ n \in \N\} \cup \{0\}$, where $-\infty \mapsto 2$, $x \mapsto 1/(x+1)$, and $\infty \mapsto 0$ a homeomorphism.  Thus, every point of $\N_{0,\pm \infty}$, except for $\infty$, is isolated.  Subspaces of $\N_{0,\pm \infty}$ are given the subspace topology.

\subsubsection{Dyadic valuation and intervals}

Denote by $\nu_2: \Z \to \N_{0,\infty}$ the two-adic valuation on $\Z$, defined by $\nu_2(n) \defeq \max \{k \in \N_0 \ | \ 2^k \text{ divides } n \}$. A finite interval $\{a, a+1, \ldots, a+\ell - 1\}$ in $\N_0$ is a \emph{dyadic interval} if $\ell = 2^k$ for some $k \in \N_0$ and $\ell \mid a$.  Denote by $\cubes_{2^n}$ the set of dyadic intervals of cardinality $2^n$.  We will say that $\N_0$ is the dyadic interval of infinite cardinality.

The following lemma follows from standard facts about dyadic intervals.  We provide a proof for completeness and ease of reference for later.

\begin{lemma}
\label{lemma_basic_dyadic_cubes_facts}
Let $Q$ and $P$ be finite dyadic intervals in $\N_0$.
\begin{enumerate}
    \item \label{item_dyadic_prop_two}
    $Q \cap 2^{\nu_2(\min Q)}\N = \{\min Q\}$.
    \item \label{item_dyadic_prop_three}
    For all $\ell \in \N_0$, if $\big(Q \setminus \{\min Q\} \big) \cap 2^{\ell} \N \neq \emptyset$, then $|Q| \geq 2^{\ell + 1}$.
    \item \label{item_dyadic_prop_one}
    If $P \cap Q \neq \emptyset$, then either $P \subseteq Q$ or $Q \subseteq P$.
\end{enumerate}
\end{lemma}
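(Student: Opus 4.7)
The plan is to prove all three items by direct elementary arithmetic, built on the following setup. Write $Q = \{a, a+1, \ldots, a + 2^k - 1\}$, where $a = \min Q$ and $2^k = |Q|$, and set $m = \nu_2(a)$. The pivotal observation, used repeatedly, is that the divisibility $2^k \mid a$ forces $k \leq m$ (whenever $a \geq 1$; the edge case $a = 0$ gives $m = \infty$ and $Q = [2^k]$, which makes (i) and (ii) trivial under any sensible reading of $2^{\infty} \N$).

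For (i), an element $a + j \in Q$ with $0 \leq j \leq 2^k - 1$ lies in $2^m \N$ if and only if $2^m \mid j$, because $2^m \mid a$. Since $0 \leq j < 2^k \leq 2^m$, the only possibility is $j = 0$, yielding $\{\min Q\}$.

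For (ii), I would split on the relationship between $\ell$ and $m$. When $\ell \leq m$, one has $2^\ell \mid a$, so the hypothesis $2^\ell \mid a + j$ gives $2^\ell \mid j$; combined with $0 < j \leq 2^k - 1$ this forces $2^\ell \leq 2^k - 1$, hence $k \geq \ell + 1$ and $|Q| \geq 2^{\ell + 1}$, as required. When $\ell > m$, I would invoke the standard identity that $\nu_2(a + j) = \min(\nu_2(a), \nu_2(j))$ whenever $\nu_2(a) \neq \nu_2(j)$: if $\nu_2(j) \neq m$, then $\nu_2(a + j) \leq m < \ell$, contradicting $2^\ell \mid a + j$; and if $\nu_2(j) = m$, then $j \geq 2^m \geq 2^k > 2^k - 1 \geq j$, again a contradiction. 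So this case cannot occur, and the conclusion holds.

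For (iii), I would argue by the familiar laminar structure of dyadic intervals. Without loss of generality $|P| \leq |Q|$; write $|P| = 2^j$, $|Q| = 2^k$ with $j \leq k$, and put $p = \min P$, $q = \min Q$. Since $j \leq k$, both $2^j \mid p$ and $2^j \mid q$, so $p - q \equiv 0 \pmod{2^j}$. Choosing any $n \in P \cap Q$, the inclusions $n \in P$ and $n \in Q$ yield $p \leq n \leq q + 2^k - 1$ and $q \leq n \leq p + 2^j - 1$; combined with $p - q \in 2^j \Z$, these pin $p - q$ down as a nonnegative multiple of $2^j$ in $[0, 2^k - 2^j]$, forcing $q \leq p$ and $p + 2^j - 1 \leq q + 2^k - 1$, i.e., $P \subseteq Q$. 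I do not foresee serious obstacles; the only delicate point is the two-case analysis in (ii), particularly ruling out $\ell > m$ via 2-adic valuations.
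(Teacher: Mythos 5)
Your proof is correct. For items (i) and (ii) you argue essentially as the paper does: the length bound $|Q|=2^k\le 2^{\nu_2(\min Q)}$ forces the only multiple of $2^{\nu_2(\min Q)}$ in $Q$ to be its left endpoint, and the $\ell\le m$ computation in (ii) is exactly the paper's; the only cosmetic difference is that where the paper rules out $\ell\ge\nu_2(\min Q)$ by citing item (i), you rule it out directly with the ultrametric identity $\nu_2(a+j)=\min(\nu_2(a),\nu_2(j))$ for $\nu_2(a)\neq\nu_2(j)$ — either way the case is vacuous. Item (iii) is where you genuinely diverge: the paper deduces laminarity by contradiction from item (ii) applied twice (if $\min P\in Q\setminus\{\min Q\}$ and $P\not\subseteq Q$, then $\min P$ witnesses $|Q|\ge 2|P|$ while $\max Q+1\in P$ witnesses $|P|\ge 2|Q|$), whereas you give a direct, self-contained computation: with $|P|=2^j\le 2^k=|Q|$, both minima are divisible by $2^j$, and the two endpoint inequalities coming from a common point pin $\min P-\min Q$ to a multiple of $2^j$ in $[0,2^k-2^j]$, hence $P\subseteq Q$. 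Your route for (iii) buys independence from (i)–(ii) at the cost of a little congruence bookkeeping; the paper's buys brevity by recycling (ii). One small caveat: your remark that the case $\min Q=0$ makes (i) ``trivial under any sensible reading of $2^{\infty}\N$'' is glib (with $2^{\infty}\N=\emptyset$ the statement would literally fail), but the paper's own proof also silently assumes $\min Q\in\N$, and in the lemma's applications the relevant intervals have positive minimum, so this is not a substantive gap; note also that your case $\ell\le m$ already covers (ii) for $\min Q=0$ without any such reading.
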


\begin{proof}
    
    \eqref{item_dyadic_prop_two} \ There exists $n \in \N$ and $0 \leq k \leq \nu_2(n)$ such that $Q = n + [2^k]$.  Since $Q$ is an interval beginning at $n \in 2^{\nu_2(n)} \N$ with $|Q| = 2^k \leq 2^{\nu_2(n)}$, we have that $Q \cap 2^{\nu_2(n)}\N = \{n\}$.

    \eqref{item_dyadic_prop_three} \ Write $Q = n + [2^k]$ as in \eqref{item_dyadic_prop_two}. Let $\ell \in \N_0$, and suppose that $\big(Q \setminus \{n\} \big) \cap 2^{\ell} \N$ is nonempty.  It follows from the previous paragraph that $\ell < \nu_2(n)$.  Let $a \in \big(Q \setminus \{n\} \big) \cap 2^{\ell} \N$.  Since $|Q| = 2^k > a-n$ and $\nu_2(a-n) \geq \ell$, we see that $k \geq \ell + 1$.  Therefore, $|Q| \geq 2^{\ell + 1}$, as desired.

    \eqref{item_dyadic_prop_one} \ Let $P$ and $Q$ be dyadic intervals.  Note that if $\min P = \min Q$, then $P \subseteq Q$ or $Q \subseteq P$, and we are done.  Otherwise, we will show that if $\min P \in Q \setminus \{\min Q\}$, then $P \subseteq Q$.  It will follow that either $P$ and $Q$ are disjoint or that one is contained in the other.  
    
    Suppose for a contradiction that $\min P \in Q \setminus \{\min Q\}$ and $P \not\subseteq Q$.  Then $\max Q + 1 = \min Q + |Q| \in P$ and is divisible by $|Q|$.  It follows from \eqref{item_dyadic_prop_three} that $|P| \geq 2 |Q|$.  But since $P \not\subseteq Q$ and $|P|$ divides $\min P$, we see by \eqref{item_dyadic_prop_three} that $|Q| \geq 2 |P|$, a contradiction.
\end{proof}

We denote by $\cubes_{2^n}^*$ the collection of subsets of $\N$ that have nonempty intersection with all members of $\cubes_{2^n}$.  (This notation matches notation for the dual family defined in \cref{sec_dual_families}, but note that $\cubes_{2^n}$ is not a family since it is not upward closed.)

\begin{lemma}
\label{lemma_syndetic_characterization}
A set $A \subseteq \N_0$ is syndetic if and only if there exists $k \in \N$ such that $A \in \cubes_{2^k}^*$.
\end{lemma}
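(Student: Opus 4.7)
The plan is to prove both directions directly, by relating the length $2^k$ of dyadic intervals to a bound on the gaps of $A$. No deeper machinery seems to be required.

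For the ``only if'' direction, I would assume $A \subseteq \N_0$ is syndetic and choose $N \in \N$ such that every interval in $\N_0$ of length $N$ has nonempty intersection with $A$. Pick $k \in \N$ with $2^k \geq N$. Every $Q \in \cubes_{2^k}$ is an interval of length $2^k \geq N$ and therefore meets $A$, so $A \in \cubes_{2^k}^*$.

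For the ``if'' direction, suppose $A \in \cubes_{2^k}^*$ for some $k \in \N$. The key step is to show that every interval of length $2^{k+1}$ in $\N_0$ contains some dyadic interval of cardinality $2^k$. Given such an interval $[a, a + 2^{k+1} - 1]$, let $m$ be the least multiple of $2^k$ with $m \geq a$; then $m \leq a + 2^k - 1$, and hence the dyadic interval $\{m, m+1, \ldots, m + 2^k - 1\} \in \cubes_{2^k}$ lies inside $[a, a + 2^{k+1} - 1]$. Since $A$ meets every dyadic interval of cardinality $2^k$, it meets every interval of length $2^{k+1}$, so $A$ has gaps of size at most $2^{k+1}$ and is syndetic.

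The argument is routine and I do not anticipate any genuine obstacle; the only point to verify carefully is the containment of a dyadic interval of length $2^k$ inside every interval of length $2^{k+1}$, which comes directly from divisibility and the definition of dyadic interval. (One could alternatively deduce this from \cref{lemma_basic_dyadic_cubes_facts} \eqref{item_dyadic_prop_one}, but the direct calculation is shorter.)
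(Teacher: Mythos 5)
Your proof is correct; the paper leaves this lemma as an easy exercise with no argument of its own, and your gap-versus-dyadic-length computation (choosing $2^k \geq N$ in one direction, and locating a multiple of $2^k$ within the first half of any interval of length $2^{k+1}$ in the other) is precisely the intended routine argument. One small caveat: your parenthetical suggestion that the containment step could instead be deduced from \cref{lemma_basic_dyadic_cubes_facts} \eqref{item_dyadic_prop_one} does not quite apply, since that statement compares two dyadic intervals while your ambient interval of length $2^{k+1}$ need not be dyadic — but the direct divisibility calculation you actually use is the right one and is complete.
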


\begin{proof}
    This is left to the reader as an easy exercise.
\end{proof}

\subsubsection{The 2-adic odometer}
\label{sec_two_adic_odometer}

We denote by $(\odometer,\orotation)$ the 2-adic odometer, the inverse limit of the family of rotations $(\Z / 2^k \Z, \allowbreak +1)$, $k \in \N_0$, as topological dynamical systems.  We do not have need to specify a metric on $\odometer$ explicitly; it suffices for our purposes to know that two elements of $\odometer$ are near if and only if their projections to $\Z / 2^k \Z$ agree for some large value of $k$.

For $n \in \Z$, we denote by $n$ the element of $Z$ that projects, for all $k \in \N_0$, to $n \in \Z / 2^k \Z$. This association gives a dense embedding of $\Z$ into $\odometer$.  By a convenient abuse of notation, we write $\Z \subseteq \odometer$, identifying $\Z$ with this copy in $\odometer$.

The two-adic valuation $\nu_2: \Z \to \N_{0,\infty}$ is uniformly continuous in the subspace topology that $\Z$ inherits from the space $\odometer$.  Indeed, if the images of integers $n$ and $m$ are the same in $\Z / 2^k \Z$ for a large value of $k$, then $2^k$ divides $n-m$, so $\nu_2(n)$ is large if and only if $\nu_2(m)$ is large.  Therefore, the map $\nu_2: \Z \to \N_{0,\infty}$ extends uniquely to a continuous map $\nu_2: \odometer \to \N_{0,\infty}$.

\subsubsection{The punch maps}
\label{sec_punch_maps}

Let $X = \{0, 1\}^{\N_0}$, and fix $A \subseteq \N_0$.  We will define a family of continuous maps $\pi_n: X \to X$, $n \in \N_{0,\infty}$, depending on $A$ that drive the shift-punch dynamics described in the next section.

First, we define a map $\nu_A: X \to \N_{0,\pm\infty}$ in the following way.  For $\omega \in X$,
\begin{align}
    \label{eqn_def_of_M_set}
    \nu_A(\omega) \defeq \sup \big\{n \in \N_0 \ \big| \ A \cap [2^{n}] \subseteq \supp(\omega) \big\},
\end{align}
where the supremum of the empty set is $-\infty$.  Note that $\nu_A(\omega) = -\infty$ if and only if $0 \in A$ and $\omega(0) = 0$.  Also, note that $\nu_A(\omega) = \infty$ if and only if $A \subseteq \supp(\omega)$.

For $n \in \N_{0,\infty}$ and $\omega \in X$, we define the \emph{punch interval}
\[I_n(\omega) \defeq \big[2^{\min(\nu_A(\omega),n)} \big],\]
where $[2^{-\infty}]$ and $[2^{\infty}]$ are interpreted to be the empty set and $\N_0$, respectively. Then, we define the \emph{punch map} $\pi_n: X \to X$ by
\[\big(\pi_n (\omega) \big)(i) = \begin{cases} 1_A(i) & \text{ if $i \in I_n(\omega)$} \\ \omega(i) & \text{ if $i \in \N_0 \setminus I_n(\omega)$} \end{cases}.\]
Informally, if $A \cap [2^m] \subseteq \supp(\omega)$, where $m \in \{-\infty, 0, 1, \ldots, n\}$ is maximal as such, then $\pi_n(\omega)$ is defined so that $\supp(\pi_n(\omega)) \cap [2^m] = A \cap [2^m]$ and $\supp(\pi_n(\omega)) \cap (\N_0 \setminus [2^m]) = \supp(\omega) \cap (\N_0 \setminus [2^m])$. The word ``punch'' was chosen since the punch maps ``punch out'' ones by changing them to zeroes.

\begin{lemma}
\label{lemma_continuity_of_family_of_punches}
    Each of the maps $\pi_n: X \to X$, $n \in \N_{0,\infty}$, is continuous, and the map $\N_{0,\infty} \to C(X,X)$ given by $n \mapsto \pi_n$ is continuous, where $C(X,X)$ has the supremum topology.
\end{lemma}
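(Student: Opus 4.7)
The plan is to reduce everything to the following key observation: for each $k \in \N_0$, whether $\nu_A(\omega) \geq k$ is determined by $\omega$ restricted to $[2^k]$, since by definition this happens if and only if $\omega(i) = 1$ for every $i \in A \cap [2^k]$. Consequently, for any $m \in \N_0$, the value $\min(\nu_A(\omega), m)$ is determined by $\omega \restriction [2^m]$. I will record this once and use it throughout.

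For continuity of $\pi_n$ with $n \in \N_0$, I would observe that $I_n(\omega) \subseteq [2^n]$, so $\pi_n(\omega)(i) = \omega(i)$ for all $i \geq 2^n$. Meanwhile, for $i < 2^n$, the value $\pi_n(\omega)(i)$ is determined by $I_n(\omega)$ (which by the key observation depends only on $\omega \restriction [2^n]$) and by $\omega(i)$. Hence $\pi_n$ factors through the continuous projection $X \to \{0,1\}^{[2^n]}$ followed by a map on a finite discrete space, and is therefore continuous.

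For continuity of $\pi_\infty$ at a point $\omega$, I would split into three cases according to $\nu_A(\omega)$. If $\nu_A(\omega) = -\infty$, then $0 \in A$ and $\omega(0) = 0$; any $\omega'$ agreeing with $\omega$ on $\{0\}$ has $\nu_A(\omega') = -\infty$, so $I_\infty = \emptyset$ and $\pi_\infty$ is just the identity on a neighborhood of $\omega$. If $\nu_A(\omega) = k < \infty$, then for $\omega'$ agreeing with $\omega$ on $[2^{k+1}]$, the key observation forces $\nu_A(\omega') = k$ too, so $I_\infty(\omega) = I_\infty(\omega') = [2^k]$; the two images then both equal $1_A$ on $[2^k]$ and equal $\omega$ resp.\ $\omega'$ elsewhere, which agree on any long prescribed initial segment. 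If $\nu_A(\omega) = \infty$, then $\pi_\infty(\omega) = 1_A$; for $\omega'$ agreeing with $\omega$ on $[N]$ and $k$ maximal with $2^k \leq N$, the key observation gives $\nu_A(\omega') \geq k$, so $I_\infty(\omega') \supseteq [2^k]$, whence $\pi_\infty(\omega')$ agrees with $1_A = \pi_\infty(\omega)$ on $[2^k]$.

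Finally, for continuity of $n \mapsto \pi_n$, I would note that every point of $\N_0$ is isolated in $\N_{0,\infty}$, so only continuity at $\infty$ requires proof; this amounts to showing $\pi_n \to \pi_\infty$ uniformly on $X$. Fix $n \in \N_0$ and $\omega \in X$. If $\nu_A(\omega) \leq n$, then $\min(\nu_A(\omega),n) = \nu_A(\omega) = \min(\nu_A(\omega),\infty)$, so $\pi_n(\omega) = \pi_\infty(\omega)$ identically. If $\nu_A(\omega) > n$, then $I_n(\omega) = [2^n] \subseteq I_\infty(\omega)$; on $[2^n]$ both maps return $1_A$, and off $I_\infty(\omega)$ both return $\omega$, so $\pi_n(\omega)$ and $\pi_\infty(\omega)$ agree on $[2^n]$ in either case. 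Thus $\sup_{\omega \in X} d_X(\pi_n(\omega), \pi_\infty(\omega)) \to 0$ as $n \to \infty$, giving the required uniform convergence. No step looks genuinely hard once the key observation about $\nu_A$ is isolated; the only delicate point is the case $\nu_A(\omega) = \infty$ in the continuity of $\pi_\infty$, where one must not assume $\omega = 1_A$ but only $A \subseteq \supp(\omega)$, and use that the initial agreement forces the punch interval of the nearby point to cover a long initial segment.
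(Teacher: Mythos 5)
Your proposal is correct and follows essentially the same route as the paper: both proofs rest on the observation that $\min(\nu_A(\omega),n)$, and hence the punch interval $I_n(\omega)$, is determined by $\omega$ restricted to an initial segment, and both obtain continuity of $n \mapsto \pi_n$ by reducing to uniform convergence $\pi_n \to \pi_\infty$ and running the same case analysis on $\nu_A(\omega)$ (exact equality when $\nu_A(\omega) \leq n$, agreement with $1_A$ on $[2^n]$ otherwise). Your packaging of the finite-$n$ case as factoring through the projection to $\{0,1\}^{[2^n]}$ is a clean but minor repackaging of the paper's argument rather than a genuinely different method.
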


\begin{proof}
    Let $n \in \N_{0,\infty}$.  To see that $\pi_n: X \to X$ is continuous, let $\omega \in X$ and $N \in \N$.  If $\nu_A(\omega) \in \N_{0,-\infty}$, then for all $\omega' \in X$ sufficiently close to $\omega$, the points $\omega'$ and $\omega$ agree on $[N]$ and $\nu_A(\omega') = \nu_A(\omega)$.  If, on the other hand, $\nu_A(\omega) = \infty$, then for all $\omega' \in X$ sufficiently close to $\omega$, the points $\omega'$ and $\omega$ agree on $[N]$ and $\nu_A(\omega') \geq N$.  In both cases, we see that $\pi_n(\omega')$ and $\pi_n(\omega)$ agree on $[N]$.  By the topology of the space $X$, this shows that $\pi_n$ is continuous.

    To show that the map $\N_{0,\infty} \to C(X,X)$ given by $n \mapsto \pi_n$ is continuous, it suffices by the topology on $\N_{0,\infty}$ to show that $\lim_{n \to \infty} \pi_n = \pi_\infty$, that is, for all $\eps > 0$ and all sufficiently large $n \in \N$,
    \[\sup_{\omega \in X} d_X\big(\pi_n(\omega), \pi_\infty(\omega) \big) < \eps.\]
    Let $\eps > 0$ and $\omega \in X$. If $\nu_A(\omega) = -\infty$, then for all $n \in \N_0$, $\pi_\infty(\omega) = \pi_n(\omega)$. If $\nu_A(\omega) \in \N_0$, then for all $n \in \N_0$ with $n \geq \nu_A(\omega)$, $\pi_\infty(\omega) = \pi_n(\omega)$. If $\nu_A(\omega) = \infty$, then $\pi_n(\omega)$ and $\pi_\infty(\omega)$ match on $[2^n]$, because they both match $1_A$ on $[2^n]$.  Thus, in all cases, for $n \in \N_0$ sufficiently large, $d_X(\pi_n(\omega), \pi_\infty(\omega)) < \eps$, as was to be shown.
\end{proof}

The following lemma gives finer information on the continuity of $\pi_n$, $n \in \N_0$.

\begin{lemma}
\label{lemma_continuity_of_pins}
    Let $n, \ell \in \N_0$ with $\ell \geq 2^n$.  If $\omega, \xi \in X$ agree on $[\ell]$, then $I_n(\omega) = I_n(\xi)$, and $\pi_n(\omega)$ and $\pi_n(\xi)$ agree on $[\ell]$.
\end{lemma}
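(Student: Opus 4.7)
The plan is to reduce both assertions to the observation that the quantity $\min(\nu_A(\omega),n)$ only ``reads'' the initial segment of $\omega$ on $[2^n]$, which by hypothesis lies inside $[\ell]$ where $\omega$ and $\xi$ agree. Once this is in hand, equality of punch intervals is immediate, and agreement of the punched sequences follows by splitting $[\ell]$ into the punch interval and its complement.

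First I would verify that $\min(\nu_A(\omega),n) = \min(\nu_A(\xi),n)$. Unwinding the definition in \eqref{eqn_def_of_M_set}, this common minimum is either $n$ (when $A \cap [2^n] \subseteq \supp(\omega)$), or the largest $k \in \{0,1,\ldots,n-1\}$ satisfying $A \cap [2^k] \subseteq \supp(\omega)$, or $-\infty$ (when no such $k$ exists, which only happens if $0 \in A$ and $\omega(0)=0$). In every case the value is determined by inspecting which of the sets $A \cap [2^0], A \cap [2^1], \ldots, A \cap [2^n]$ are contained in $\supp(\omega)$, and each of these subsets lies in $[2^n] \subseteq [\ell]$. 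Since $\omega$ and $\xi$ agree on $[\ell]$, we have $A \cap [2^k] \subseteq \supp(\omega)$ if and only if $A \cap [2^k] \subseteq \supp(\xi)$ for all $k \in \{0,\ldots,n\}$, giving the claimed equality and hence $I_n(\omega) = I_n(\xi)$.

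Calling this common punch interval $I$, I would then verify agreement of $\pi_n(\omega)$ and $\pi_n(\xi)$ on $[\ell]$ by a direct case split: for $i \in I$, both values equal $1_A(i)$ by definition of the punch map; for $i \in [\ell] \setminus I$, both values equal the corresponding value of the underlying sequence, which coincide because $\omega$ and $\xi$ agree on $[\ell]$. Note $I \subseteq [2^n] \subseteq [\ell]$, so this split really does cover all of $[\ell]$.

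There is no real obstacle here beyond careful bookkeeping. The only mild subtlety is handling the boundary values $\nu_A = -\infty$ and $\nu_A = \infty$ uniformly, but both fit the argument cleanly once one adopts the conventions $[2^{-\infty}] = \emptyset$ and $[2^{\infty}] = \N_0$ from \cref{sec_punch_maps}; in the latter case the hypothesis $\ell \geq 2^n$ would force us to argue only on $[2^n]$, but $n$ is finite so $\min(\nu_A(\omega),n)=n$ and $I = [2^n]$, and the argument proceeds as above.
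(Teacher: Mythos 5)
Your proposal is correct and follows essentially the same route as the paper: both arguments first establish $I_n(\omega)=I_n(\xi)$ and then conclude agreement of the punched points on $[\ell]$ by noting that the punch only alters coordinates inside the common interval, which lies in $[2^n]\subseteq[\ell]$. The only difference is cosmetic — you replace the paper's explicit four-case analysis on $\nu_A$ with the single observation that $\min(\nu_A(\cdot),n)$ is determined by the restriction to $[2^n]$, which is a clean way to package the same fact.
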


\begin{proof}
    Suppose that $\omega, \xi \in X$ agree on $[\ell]$.  We consider the following cases.
    \begin{enumerate}
        \item Case 1: $\nu_A(\omega) = -\infty$ or $\nu_A(\xi) = -\infty$.  In this case, since $\omega$ and $\xi$ agree on $[\ell]$, we have that $\nu_A(\omega) = \nu_A(\xi) = -\infty$ and $I_n(\omega) = I_n(\xi) = \emptyset$.  Since $\omega$ and $\xi$ are unchanged by $\pi_{n}$, we have that $\pi_{n} (\omega)$ and $\pi_{n} (\xi)$ agree on $[\ell]$.

        \item Case 2: $0 \leq \nu_A(\omega) < n$.  By the definition of $\nu_A$, we have that $A \cap [2^{\nu_A(\omega)}] \subseteq \supp(\omega)$ but $A \cap [2^{\nu_A(\omega)+1}] \not\subseteq \supp(\omega)$. Since $2^{\nu_A(\omega)} < 2^{n} \leq \ell$, we have that $[2^{\nu_A(\omega)+1}] \subseteq [\ell]$. Since $\omega$ and $\xi$ agree on $[\ell]$, we see that $\nu_A(\omega) = \nu_A(\xi)$, and hence that $I_n(\omega) = I_n(\xi)$.  Since $\pi_{n}$ changes $\omega$ and $\xi$ on the same interval contained in $[\ell]$, we see that $\pi_{n} (\omega)$ and $\pi_{n} (\xi)$ agree on $[\ell]$, as desired.

        \item Case 3: $0 \leq \nu_A(\xi) < n$. We argue just as in Case 2 to arrive at the same conclusion.

        \item Case 4: $\nu_A(\omega) \geq n$ and $\nu_A(\xi) \geq n$. In this case, we see that $I_n(\omega) = I_n(\xi) = [2^{n}]$.  Since $2^{n} \leq \ell$ and $\omega$ matches $\xi$ on $[\ell]$, we see that $\pi_{n} (\omega)$ and $\pi_{n} (\xi)$ agree on $[\ell]$, as desired.
    \end{enumerate}
    This concludes the casework and the proof of the lemma.
\end{proof}

\subsubsection{The shift-punch system}
\label{sec_punch_system}

The \emph{shift-punch system} $(\odometer \times X, \kappa)$ is the topological skew product system defined by the \emph{shift-punch map} $\kappa: \odometer \times X \to \odometer \times X$,
\[\kappa(z,\omega) \defeq \big(\orotation z, \pi_{\nu_2( \orotation z)}(\sigma \omega) \big).\]
Though not apparent from the notation, the punch maps $\pi_n$, the shift-punch map $\kappa$, and the shift-punch system $(\odometer \times X, \kappa)$ all depend highly on the set $A$.

\begin{lemma}
    The shift-punch system $(\odometer \times X, \kappa)$ is a topological dynamical system.
\end{lemma}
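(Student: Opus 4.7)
The plan is to verify the two conditions that make a pair $(Y,S)$ a topological dynamical system, as stated in Section~\ref{sec_top_dynamics}: that $Y$ is a nonempty, compact metric space and that $S:Y \to Y$ is continuous. For $Y = \odometer \times X$, both factors are nonempty compact metric spaces ($\odometer$ as an inverse limit of finite discrete spaces, $X = \{0,1\}^{\N_0}$ as a countable product of discrete two-point spaces), so the product is a nonempty compact metric space. The only substantive content is thus the continuity of $\kappa$.

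First I would split the continuity check into coordinates. The first coordinate is the map $(z,\omega) \mapsto \orotation z$, which is continuous because $\orotation$ is continuous on $\odometer$ (indeed, it is a homeomorphism). The second coordinate is the map $(z,\omega) \mapsto \pi_{\nu_2(\orotation z)}(\sigma \omega)$, and I would factor it as the composition
\[
(z,\omega) \ \longmapsto \ \bigl(\nu_2(\orotation z),\,\sigma\omega\bigr) \ \longmapsto \ \bigl(\pi_{\nu_2(\orotation z)},\,\sigma\omega\bigr) \ \longmapsto \ \pi_{\nu_2(\orotation z)}(\sigma \omega).
\]
The first arrow is continuous because $\sigma:X \to X$ is continuous and because, by Section~\ref{sec_two_adic_odometer}, the two-adic valuation extends to a continuous map $\nu_2: \odometer \to \N_{0,\infty}$, precomposed with the continuous map $\orotation$. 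The second arrow is continuous by \cref{lemma_continuity_of_family_of_punches}, which asserts that $n \mapsto \pi_n$ is a continuous map $\N_{0,\infty} \to C(X,X)$ with the sup topology. The third arrow is the evaluation $C(X,X) \times X \to X$, which is continuous whenever $C(X,X)$ carries the sup (uniform) topology and $X$ is a metric space (a standard fact: if $\pi_n \to \pi$ uniformly and $\omega_n \to \omega$, then $\pi_n(\omega_n) \to \pi(\omega)$ by the triangle inequality, using uniform continuity of the limit on the compact $X$ when needed).

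The one subtle point I would flag is the behavior at the ``infinity'' fibre of the odometer, namely the points $z \in \odometer$ with $\nu_2(\orotation z) = \infty$. On this fibre the punch map $\pi_\infty$ replaces $\sigma \omega$ by $1_A$ on all of $[2^{\nu_A(\sigma\omega)}]$, which could in principle be unbounded; however, the continuity of $n \mapsto \pi_n$ in the sup topology, already established in \cref{lemma_continuity_of_family_of_punches} by the case analysis on $\nu_A$, handles exactly this concern. Thus I expect the only real work is citing \cref{lemma_continuity_of_family_of_punches} and invoking continuity of evaluation; there are no delicate estimates to perform. Once the continuity of $\kappa$ is established, the lemma follows immediately.
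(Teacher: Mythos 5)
Your proof is correct and follows essentially the same route as the paper: reduce to continuity of $\kappa$ on the compact product, exploit the skew-product structure, and combine the continuity of $\orotation$, $\nu_2:\odometer \to \N_{0,\infty}$, $\sigma$, and \cref{lemma_continuity_of_family_of_punches} ($n \mapsto \pi_n$ in the sup topology). The only difference is cosmetic: you make explicit the continuity of the evaluation map $C(X,X)\times X \to X$, which the paper subsumes in the phrase ``because $\kappa$ is a skew product.''
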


\begin{proof}
    The space $Z \times X$ is compact, so we need only to show that the map $\kappa$ is continuous.  Because $\kappa$ is a skew product, it suffices to check that the map $\odometer \to C(X,X)$ defined by $z \mapsto \pi_{\nu_2(\orotation z)} \circ \sigma$ is continuous.  This follows from the fact that the shift $\sigma: X \to X$ is continuous and that the map $z \mapsto \pi_{\nu_2(\orotation z)}$ is a composition of three continuous maps: $\orotation: \odometer \to \odometer$, $\nu_2: \odometer \to \N_{0,\infty}$, and, by \cref{lemma_continuity_of_family_of_punches}, $\pi: \N_{0,\infty} \to C(X,X)$.
\end{proof}

We are interested primarily in the orbit of the point $(0,1_A)$ under the shift-punch map.  In the shift-punch dynamics, it will be important to keep track of where the punches (changes from 1 to 0) occur.  The following notation will assist with that.  Define $\bpunch^{(0)} = 1_{\N_0}$ and $\apunch^{(0)} = 1_A$, and for $n \in \N$, define $\bpunch^{(n)}, \apunch^{(n)} \in X$ to be such that
\begin{align*}
    (\orotation \times \sigma) \kappa^{n-1}(0,1_A) &= \big(n, \bpunch^{(n)}\big),\\
    \kappa^n (0,1_A) &= \big(n, \apunch^{(n)} \big).
\end{align*}
By the definition of $\kappa$, we see that for all $n \in \N_0$, $\apunch^{(n)} = \pi_{\nu_2(n)} \bpunch^{(n)}$ and $\bpunch^{(n+1)} = \sigma \apunch^{(n)}$.
Thus, the points $\bpunch^{(n)}$ and $\apunch^{(n)}$ are the second coordinates of $\kappa^{n}(0, 1_A)$ before and after, respectively, the punch by $\pi_{\nu_2(n)}$.

The \emph{punch window} at $n \in \N_0$ is defined to be
\[W(n) \defeq n + I_{\nu_2(n)}\big( \bpunch^{(n)} \big),\]
where $W(n)$ is understood to be the empty set if $I_{\nu_2(n)}\big( \bpunch^{(n)} \big)$ is the empty set.  Note that $W(0) = \N_0$; that $W(n)$ is an interval beginning at $n$ of length a divisor of $2^{\nu_2(n)}$ (and, hence, is a dyadic interval); and that $\apunch^{(n)}$ and $1_A$ agree on the interval $W(n) - n$.  Informally, we note that $\supp(\bpunch^{(n)}) \subseteq A-n$ (as $\bpunch^{(n)}$ is the result of the first $n-1$ shift-punches and a single shift) just before $\pi_{\nu_2(n)}$ is applied to change $\bpunch^{(n)}$ on the punch interval $I_{\nu_2(n)}\big( \bpunch^{(n)} \big)$ to yield $\apunch^{(n)}$, the second coordinate of $\kappa^n (0,1_A)$.  The interval $W(n)$ is the interval in the original set $A$ at which that punch occurs.

\begin{lemma}
\label{lemma_explicit_description_of_omega_and_kappa}
    For all $n \in \N_0$,
    \begin{enumerate}
        \item \label{item_explicit_description_of_omega_and_kappa_one}
    \begin{align}
        \label{eqn_set_indicated_by_omega_n}
        \supp(\bpunch^{(n)}) = \bigcap_{m=0}^{n-1} \Big( \big( A - (n-m) \big) \cup \big( \N_0 \setminus \big( W(m)-n \big) \big) \Big),
    \end{align}
        where the empty intersection is interpreted as $\N_0$;

        \item \label{item_explicit_description_of_omega_and_kappa_two}
    \begin{align}
        \label{eqn_set_indicated_by_kappa_n}
        \supp(\apunch^{(n)}) = \bigcap_{m=0}^{n} \Big( \big( A - (n-m) \big) \cup \big( \N_0 \setminus \big( W(m)-n \big) \big) \Big).
    \end{align}
    \end{enumerate}
\end{lemma}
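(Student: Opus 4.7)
The plan is to prove the two statements simultaneously by induction on $n$, alternating between them: we first verify the formulas for $n = 0$, then argue that \eqref{eqn_set_indicated_by_kappa_n} at $n$ implies \eqref{eqn_set_indicated_by_omega_n} at $n+1$, and that \eqref{eqn_set_indicated_by_omega_n} at $n$ implies \eqref{eqn_set_indicated_by_kappa_n} at $n$. For the base case, $\bpunch^{(0)} = 1_{\N_0}$ gives $\supp(\bpunch^{(0)}) = \N_0$, which matches the empty intersection convention on the right side of \eqref{eqn_set_indicated_by_omega_n}. Since $\nu_2(0) = \infty$ and $\nu_A(1_{\N_0}) = \infty$, we have $W(0) = \N_0$, whence the right side of \eqref{eqn_set_indicated_by_kappa_n} at $n=0$ collapses to $A \cup (\N_0 \setminus \N_0) = A = \supp(\apunch^{(0)})$.

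For the first inductive step, since $\bpunch^{(n+1)} = \sigma \apunch^{(n)}$, we have $\supp(\bpunch^{(n+1)}) = \{i \in \N_0 \ | \ i+1 \in \supp(\apunch^{(n)})\}$. Writing this in the notation of the lemma, ``subtract $1$'' commutes with unions, intersections, and complementation in $\N_0$, and satisfies $(A - k) - 1 = A - (k+1)$ and $(\N_0 \setminus (W(m) - n)) - 1 = \N_0 \setminus (W(m) - (n+1))$. Applying these operations term by term to the inductive formula \eqref{eqn_set_indicated_by_kappa_n} at $n$ produces exactly the right side of \eqref{eqn_set_indicated_by_omega_n} at $n+1$.

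For the second inductive step, let $S_n \defeq \supp(\bpunch^{(n)})$ and let $J \defeq I_{\nu_2(n)}(\bpunch^{(n)}) = W(n) - n$. By the definition of $\pi_{\nu_2(n)}$,
\[
    \supp(\apunch^{(n)}) = (A \cap J) \cup (S_n \setminus J).
\]
On the other hand, separating the $m = n$ term in the right side of \eqref{eqn_set_indicated_by_kappa_n} and applying the inductive formula \eqref{eqn_set_indicated_by_omega_n} at $n$ to the remaining intersection over $0 \leq m \leq n-1$ gives the set
\[
    \bigl( A \cup (\N_0 \setminus J) \bigr) \cap S_n = (A \cap S_n) \cup (S_n \setminus J).
\]
To reconcile the two, it suffices to show $A \cap J \subseteq S_n$, since then $A \cap J = A \cap J \cap S_n$ and the common complement outside $J$ is $S_n \setminus J$ on both sides. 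This containment is the crux of the argument, and it follows directly from the definition of $\nu_A$ in \eqref{eqn_def_of_M_set}: since $J = [2^{\min(\nu_A(\bpunch^{(n)}), \nu_2(n))}]$, we have $A \cap J \subseteq A \cap [2^{\nu_A(\bpunch^{(n)})}] \subseteq S_n$ (with both endpoint cases $\nu_A(\bpunch^{(n)}) = -\infty$ and $\nu_A(\bpunch^{(n)}) = \infty$ handled trivially).

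The main obstacle is bookkeeping rather than substance: one must track the interplay between the shift on $\N_0$, the index shift in the intersection formula, and the precise punch window $W(n)$, so that the induction closes cleanly. The one nontrivial ingredient beyond formal manipulation is the containment $A \cap (W(n)-n) \subseteq \supp(\bpunch^{(n)})$, which is exactly what the definition of $\nu_A$ was designed to deliver.
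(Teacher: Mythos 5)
Your proof is correct and follows essentially the same route as the paper's: a simultaneous induction on $n$ in which the shift $\bpunch^{(n+1)}=\sigma\apunch^{(n)}$ translates formula \eqref{eqn_set_indicated_by_kappa_n} into \eqref{eqn_set_indicated_by_omega_n} at the next index, and the definition of the punch map $\pi_{\nu_2(n)}$ converts \eqref{eqn_set_indicated_by_omega_n} into \eqref{eqn_set_indicated_by_kappa_n}. The only differences are presentational: you phrase the argument with set identities rather than pointwise equivalences, and you make explicit the containment $A \cap (W(n)-n) \subseteq \supp(\bpunch^{(n)})$ (i.e., that punches only change $1$'s to $0$'s), which the paper uses implicitly via the definition of $\nu_A$.
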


\begin{proof}
    We will prove both \eqref{item_explicit_description_of_omega_and_kappa_one} and \eqref{item_explicit_description_of_omega_and_kappa_two} simultaneously by induction on $n \in \N_0$. The base case $n=0$ follows by set algebra, recalling that, by definition, $\bpunch^{(0)} = 1_{\N_0}$ and $\apunch^{(0)} = 1_A$.
    
    Suppose both \eqref{item_explicit_description_of_omega_and_kappa_one} and \eqref{item_explicit_description_of_omega_and_kappa_two} hold for some $n \in \N_0$. Let $i \in \N_0$.  Since $\bpunch^{(n+1)} = \sigma \apunch^{(n)}$, we see that $\big(\bpunch^{(n+1)} \big)(i) = 1$ if and only if $\big(\apunch^{(n)} \big)(i+1) = 1$. By the inductive hypothesis, we have that $\big(\apunch^{(n)} \big)(i+1) = 1$ if and only if
    \[i+1 \in \bigcap_{m=0}^{n} \Big( \big( A - (n-m) \big) \cup \big( \N_0 \setminus \big( W(m)-n \big) \big) \Big).\]
    The previous line rearranges to
    \begin{align}
    \label{eqn_big_set_description_for_inductive_step}
        i \in \bigcap_{m=0}^{n} \Big( \big( A - (n+1-m) \big) \cup \big( \N_0 \setminus \big( W(m)-(n+1) \big) \big) \Big),
    \end{align}
    which shows \eqref{item_explicit_description_of_omega_and_kappa_one} for $n+1$.

    To see \eqref{item_explicit_description_of_omega_and_kappa_two} for $n+1$, let $i \in \N_0$. Note that by the definition of the map $\pi_{\nu_2(n+1)}$, $\big(\apunch^{(n+1)} \big)(i) = \big(\pi_{\nu_2(n+1)} \bpunch^{(n+1)} \big)(i) = 1$ if and only if $\big(\bpunch^{(n+1)}\big)(i) = 1$ and either
    \begin{itemize}
        \item $i \not\in I_{\nu_2(n+1)}(\bpunch^{(n+1)}) = W(n+1) - (n+1)$, so that $\pi_{\nu_2(n+1)}$ does not modify $\bpunch^{(n+1)}$ at $i$; or
        \item $i \in A$, so that, if $\pi_{\nu_2(n+1)}$ modifies $\bpunch^{(n+1)}$ at $i$, it does not change 1 to 0.
    \end{itemize}
    By the previous paragraph, we have that $\big(\bpunch^{(n+1)}\big)(i) = 1$ if and only if \eqref{eqn_big_set_description_for_inductive_step} holds, and so we see by the previous sentence that $\big(\apunch^{(n+1)} \big)(i) = 1$ if and only if $i$ belongs to the set
    \[\bigcap_{m=0}^{n} \Big( \big( A - (n+1-m) \big) \cup \big( \N_0 \setminus \big( W(m)-(n+1) \big) \big) \Big) \cap \big(A \cup \big(\N_0 \setminus (W(n+1)-(n+1))\big) \big).\]
    The set on the previous line simplifies to the one in \eqref{eqn_set_indicated_by_kappa_n} for $n+1$, demonstrating \eqref{item_explicit_description_of_omega_and_kappa_two} for $n+1$, as desired.
\end{proof}

\begin{lemma}
\label{lemma_second_coords_agree}
    Let $m \in \N_0$.  For all $i \in W(m) - m$, the points $\apunch^{(i)}$ and $\apunch^{(m+i)}$ agree on the interval $W(m) - m - i$.
\end{lemma}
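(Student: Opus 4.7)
The plan is to induct on $i \geq 0$, with $m$ fixed. The special cases $m = 0$ (where $W(0) = \N_0$ makes the statement trivially hold, as $\apunch^{(i)}$ agrees with itself) and $|W(m)| = 0$ (where the conclusion is vacuous) require no argument, so I would assume $m \geq 1$ and $W(m) \neq \emptyset$. In this case, the definition of the punch interval $I_n(\omega) = [2^{\min(\nu_A(\omega), n)}]$ forces $|W(m)|$ to be a power of $2$; writing $|W(m)| = 2^k$, the dyadic interval $W(m)$ begins at $m$ and has length $2^k$, so $k \leq \nu_2(m)$. The base case $i = 0$ is exactly the observation recorded directly after the definition of the punch window: $\apunch^{(m)}$ agrees with $\apunch^{(0)} = 1_A$ on $W(m) - m = [2^k]$.

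For the inductive step, fix $i$ with $i + 1 \in W(m) - m$ (equivalently $0 \leq i \leq 2^k - 2$) and suppose $\apunch^{(i)}$ and $\apunch^{(m+i)}$ agree on $[2^k - i]$. Applying the shift, $\bpunch^{(i+1)} = \sigma \apunch^{(i)}$ and $\bpunch^{(m+i+1)} = \sigma \apunch^{(m+i)}$ agree on $[2^k - (i+1)]$. To pass from this to agreement of $\apunch^{(i+1)} = \pi_{\nu_2(i+1)} \bpunch^{(i+1)}$ and $\apunch^{(m+i+1)} = \pi_{\nu_2(m+i+1)} \bpunch^{(m+i+1)}$ on the same interval, I would invoke \cref{lemma_continuity_of_pins}.

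This invocation is where essentially all the work is, and it requires two checks, both pure 2-adic arithmetic; this is the only real obstacle. First, $\nu_2(i+1) = \nu_2(m+i+1)$, so that both sides are hit by the same punch map. This follows from $\nu_2(i+1) < k \leq \nu_2(m)$ (since $1 \leq i+1 < 2^k$) together with the standard additivity $\nu_2(a+b) = \min(\nu_2(a), \nu_2(b))$ when $\nu_2(a) \neq \nu_2(b)$. Second, the hypothesis $\ell \geq 2^n$ of \cref{lemma_continuity_of_pins} reads $2^k - (i+1) \geq 2^{\nu_2(i+1)}$; writing $i+1 = 2^{\nu_2(i+1)} q$ with $q$ odd and $q < 2^{k - \nu_2(i+1)}$ gives $2^k - (i+1) = 2^{\nu_2(i+1)}\bigl(2^{k - \nu_2(i+1)} - q\bigr) \geq 2^{\nu_2(i+1)}$. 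With both checks in hand, \cref{lemma_continuity_of_pins} delivers the agreement of $\apunch^{(i+1)}$ and $\apunch^{(m+i+1)}$ on $[2^k - (i+1)] = W(m) - m - (i+1)$, closing the induction. Once one notices that $|W(m)|$ is forced to be a power of $2$ dividing $2^{\nu_2(m)}$, everything else is bookkeeping.
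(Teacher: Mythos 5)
Your proof is correct and follows essentially the same route as the paper's: induction on $i$, the base case from the definition of $W(m)$, and the inductive step via the shift followed by \cref{lemma_continuity_of_pins}, which requires exactly the two checks $\nu_2(i+1)=\nu_2(m+i+1)$ and $2^{\nu_2(i+1)}\leq |W(m)|-(i+1)$. The only (harmless) difference is that you verify these two facts by direct 2-adic arithmetic using $|W(m)|=2^k$ with $k\leq\nu_2(m)$, whereas the paper deduces them from the dyadic-interval facts in \cref{lemma_basic_dyadic_cubes_facts}.
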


\begin{proof}
The conclusion is trivial for $m = 0$, so suppose $m \in \N$. Let $\ell = |W(m)|$. We must show that for all $i \in [\ell]$, the points $\apunch^{(i)}$ and and $\apunch^{(m+i)}$ agree on $W(m) - m - i = [\ell - i]$.  We will prove this by induction on $i$.  The base case $i = 0$ follows from the definition of $W(m)$: the points $\apunch^{(0)} = 1_A$ and $\apunch^{(m)}$ agree on $W(m) - m = [\ell]$.

Let $i \in [\ell - 1]$, and suppose that $\apunch^{(i)}$ and $\apunch^{(m+i)}$ agree on $[\ell - i]$.  We will show that $\apunch^{(i+1)}$ and $\apunch^{(m+i+1)}$ agree on $[\ell - i - 1]$.  Note that $\apunch^{(i+1)} = \pi_{\nu_2(i+1)} \sigma \apunch^{(i)}$ and $\apunch^{(m+i+1)} = \pi_{\nu_2(m+i+1)} \sigma \apunch^{(m+i)}$.

Since $\apunch^{(i)}$ and $\apunch^{(m+i)}$ agree on $[\ell - i]$, we have that $\sigma \apunch^{(i)}$ and $\sigma \apunch^{(m+i)}$ agree on $[\ell - i - 1]$.  If we show that $\nu_2(i+1) = \nu_2(m+i+1)$ and that $2^{\nu_2(m+i+1)} \leq \ell - i - 1$, then it will follow from \cref{lemma_continuity_of_pins} that $\pi_{\nu_2(i+1)} \sigma \apunch^{(i)}$ and $\pi_{\nu_2(m+i+1)} \sigma \apunch^{(m+i)}$ agree on $[\ell - i - 1]$, concluding the proof of the inductive step.

Since $W(m)$ is a dyadic interval, $m+i+1 \in (W(m) \setminus \{m\}) \cap 2^{\nu_2(m+i+1)} \N$, and $|W(m)| \leq 2^{\nu_2(m)}$, we get from \cref{lemma_basic_dyadic_cubes_facts} that $\nu_2(m+i+1) < \nu_2(m)$.  By properties of the 2-adic valuation, it follows that $\nu_2(m+i+1) = \nu_2(i+1)$. To see that $2^{\nu_2(m+i+1)} \leq \ell - i - 1$, note that $m+i+1+[2^{\nu_2(m+i+1)}]$ is a dyadic interval, intersecting, hence contained in, $W(m)$.  Therefore, $m+i+1+2^{\nu_2(m+i+1)} \leq m + |W(m)| = m + \ell$, which rearranges to the desired inequality, finishing the proof of the inductive step.
\end{proof}

The following lemma shows the ``fractal'' structure of containment amongst the punch windows.

\begin{lemma}
\label{lemma_fractal_windows}
    For all $n, m \in \N_0$, if $m < n$ and $n \in W(m)$, then $W(n)-m = W(n-m)$.
\end{lemma}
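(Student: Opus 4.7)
The plan is to prove $W(n) - m = W(n-m)$ by showing both are dyadic intervals starting at $n - m$ with the same length. By definition, $W(n) - m = (n - m) + I_{\nu_2(n)}(\bpunch^{(n)})$ and $W(n-m) = (n - m) + I_{\nu_2(n-m)}(\bpunch^{(n-m)})$, so the task reduces to showing $I_{\nu_2(n)}(\bpunch^{(n)}) = I_{\nu_2(n-m)}(\bpunch^{(n-m)})$. I would obtain this by applying \cref{lemma_continuity_of_pins}, which requires (a) that $\nu_2(n) = \nu_2(n-m)$, and (b) that $\bpunch^{(n)}$ and $\bpunch^{(n-m)}$ agree on an initial segment $[\ell]$ of length $\ell \geq 2^{\nu_2(n)}$.

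For (a): since $W(m)$ is a dyadic interval of length $\ell_m$ (a power of $2$) with $\ell_m \leq 2^{\nu_2(m)}$, and $n \in W(m) \setminus \{m\}$, \cref{lemma_basic_dyadic_cubes_facts} \eqref{item_dyadic_prop_three} applied to $Q = W(m)$ with $\ell = \nu_2(n)$ forces $\ell_m \geq 2^{\nu_2(n) + 1}$. In particular $\nu_2(n) < \nu_2(m)$, and so $\nu_2(n) = \nu_2(m + (n-m)) = \nu_2(n-m)$.

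For (b): I would apply \cref{lemma_second_coords_agree} with $i = n - 1 - m$, which belongs to $W(m) - m$ since $n - 1 \in W(m)$ (the interval $W(m)$ starts at $m$ and contains $n$). This yields that $\apunch^{(n-1-m)}$ and $\apunch^{(n-1)}$ agree on $W(m) - (n-1)$, an interval of length $\ell_m - (n - m) + 1$. Applying $\sigma$ and using $\bpunch^{(k)} = \sigma \apunch^{(k-1)}$ then gives agreement of $\bpunch^{(n-m)}$ and $\bpunch^{(n)}$ on $[\ell_m - (n-m)]$. To see that this length dominates $2^{\nu_2(n)}$, observe that $Q \defeq (n-m) + [2^{\nu_2(n-m)}]$ and $P \defeq [\ell_m]$ are dyadic intervals sharing the point $n - m$; by \cref{lemma_basic_dyadic_cubes_facts} \eqref{item_dyadic_prop_one}, one contains the other. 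Since $n - m \geq 1$ we have $0 \notin Q$, so $P \not\subseteq Q$, forcing $Q \subseteq P$ and hence $\ell_m - (n-m) \geq 2^{\nu_2(n-m)} = 2^{\nu_2(n)}$.

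With (a) and (b) in place, \cref{lemma_continuity_of_pins} gives $I_{\nu_2(n)}(\bpunch^{(n)}) = I_{\nu_2(n)}(\bpunch^{(n-m)}) = I_{\nu_2(n-m)}(\bpunch^{(n-m)})$, and the lemma follows. I expect the main obstacle to be the bookkeeping that couples the dyadic structure of $W(m)$ with the $\sigma$-shift of \cref{lemma_second_coords_agree}: one has to select the right index $i$, track the lengths after shifting, and line these up with the requirement of \cref{lemma_continuity_of_pins}. Everything else is either by definition or a clean application of the dyadic containment dichotomy.
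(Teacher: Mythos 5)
Your proof is correct and follows essentially the same route as the paper's: establish $\nu_2(n)=\nu_2(n-m)$ and $2^{\nu_2(n)} \leq |W(m)|-(n-m)$ via the dyadic-interval facts, obtain agreement of $\bpunch^{(n)}$ and $\bpunch^{(n-m)}$ on $[\,|W(m)|-(n-m)\,]$ from \cref{lemma_second_coords_agree} with $i=n-m-1$ followed by a shift, and conclude with \cref{lemma_continuity_of_pins}. The only differences are cosmetic: the paper first disposes of the trivial case $m=0$ separately (which you should too, since $W(0)=\N_0$ is not a finite dyadic interval and $\nu_2(0)=\infty$, so your step (a) does not literally apply there), and it applies the containment dichotomy to $n+[2^{\nu_2(n)}]$ and $W(m)$ directly rather than to their translates by $m$.
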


\begin{proof}
Let $n, m \in \N_0$ with $m < n$ and $n \in W(m)$.  If $m = 0$, then conclusion is immediate, so suppose $m \geq 1$.  Define $\ell = |W(m)|$.

We claim that $\nu_2(n) = \nu_2(n-m)$ and that $2^{\nu_2(n)} \leq \ell - (n-m)$.  Indeed, since $W(m)$ is a dyadic interval, $n \in (W(m) \setminus\{m\}) \cap 2^{\nu_2(n)} \N$, and $2^{\nu_2(m)} \geq |W(m)|$, we see from \cref{lemma_basic_dyadic_cubes_facts} that $\nu_2(n) < \nu_2(m)$.  By properties of the 2-adic valuation, this implies that $\nu_2(n) = \nu_2(n-m)$.  Since $n + [2^{\nu_2(n)}]$ is a dyadic interval intersecting, and hence contained in, $W(m)$, we see that $n + 2^{\nu_2(n)} \leq m + |W(m)| = m + \ell$.  Rearranging, we have that $2^{\nu_2(n)} \leq \ell - (n-m)$.

Next, we claim that $\bpunch^{(n)}$ and $\bpunch^{(n-m)}$ agree on $[\ell - n+m]$.  Indeed, it follows from \cref{lemma_second_coords_agree} with $i = n-m-1 \in [\ell]$ that the points $\apunch^{(n-m-1)}$ and $\apunch^{(n-1)}$ agree on $[\ell - (n-m-1)]$.  Applying the map $\sigma$, we see that $\bpunch^{(n)} = \sigma \apunch^{(n-1)}$ and $\bpunch^{(n-m)} = \sigma \apunch^{(n-m-1)}$ agree on $[\ell - n+m]$.

To reach the conclusion of the lemma, since $W(n)-m$ and $W(n-m)$ are both (possibly empty) intervals starting at $n-m$, it suffices to show that $|W(n)| = |W(n-m)|$. Recall that $|W(n)|$ is the length of the interval on which $\pi_{\nu_2(n)}$ changes $\bpunch^{(n)}$, and similarly for $|W(n-m)|$ and $\bpunch^{(n-m)}$.  Since $\bpunch^{(n)}$ and $\bpunch^{(n-m)}$ agree on $[\ell - n+m]$, and since $\nu_2(n) = \nu_2(n-m)$ and $2^{\nu_2(n)} \leq \ell - (n-m)$, it follows from \cref{lemma_continuity_of_pins} that $|W(n)| = |W(n-m)|$, as desired.
\end{proof}

\begin{lemma}
\label{lemma_punches_at_new_elements}
    Let $n, \ell \in \N_0$.  If
    \begin{enumerate}
        \item \label{item_n_is_new_position}
        the greatest integer $m \in [n]$ such that $n \in W(m)$ is $0$, and

        \item \label{item_original_A_is_punchable_at_n}
        $n + \big(A \cap [2^{\ell}]\big) \subseteq A$,
    \end{enumerate}
    then $|W(n)| \geq 2^{\min(\ell,\nu_2(n))}$.
\end{lemma}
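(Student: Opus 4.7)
The plan is to unwind the definition of $|W(n)|$ and reduce the claim to a set-theoretic containment concerning $\supp(\bpunch^{(n)})$. Recall from \cref{sec_punch_system} that $W(n) = n + I_{\nu_2(n)}(\bpunch^{(n)}) = n + [2^{\min(\nu_A(\bpunch^{(n)}),\, \nu_2(n))}]$. For $n \geq 1$ (the case $n=0$ is trivial, since $W(0) = \N_0$ and the right-hand side of the inequality is at most $2^\ell$), this gives $|W(n)| = 2^{\min(\nu_A(\bpunch^{(n)}),\, \nu_2(n))}$, so the desired bound is equivalent to $\nu_A(\bpunch^{(n)}) \geq \min(\ell, \nu_2(n))$. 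By the definition of $\nu_A$ in \eqref{eqn_def_of_M_set}, this in turn amounts to showing
\[
    A \cap \big[2^{\min(\ell, \nu_2(n))}\big] \subseteq \supp\big(\bpunch^{(n)}\big).
\]

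To verify this containment, I would apply \cref{lemma_explicit_description_of_omega_and_kappa}~\eqref{item_explicit_description_of_omega_and_kappa_one}, which expresses $\supp(\bpunch^{(n)})$ as an intersection indexed by $m \in \{0, 1, \ldots, n-1\}$. Fixing $i \in A \cap [2^{\min(\ell, \nu_2(n))}]$, the task becomes to show that for each such $m$, either $i + (n - m) \in A$ or $i + n \notin W(m)$. The case $m = 0$ is immediate: since $W(0) = \N_0$, the required condition collapses to $i + n \in A$, which follows directly from hypothesis (ii), as $i \in A \cap [2^\ell]$ and $n + (A \cap [2^\ell]) \subseteq A$.

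For $1 \leq m < n$, the plan is to argue by contradiction. Suppose $i + n \in W(m)$. Because $W(m)$ is a (possibly empty) interval with left endpoint $m$, the inclusion forces $m \leq i + n \leq m + |W(m)| - 1$. Combined with $m < n \leq i + n$, this yields $m \leq n \leq m + |W(m)| - 1$, so $n \in W(m)$. But hypothesis (i) precludes $n \in W(m)$ for any $1 \leq m < n$, so we reach a contradiction; hence $i + n \notin W(m)$ and the required condition holds vacuously.

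The argument is mostly bookkeeping once the correct reformulation is in place, and I do not anticipate any genuine obstacle. The one conceptual observation doing the work is that a dyadic interval anchored at $m$ and containing $i + n$ must also contain $n$ whenever $m < n \leq i + n$, which together with hypothesis (i) rules out all $m \geq 1$ at once. The remaining checks -- the edge case $n = 0$, the possibility that $W(m)$ is empty or that $\nu_A(\bpunch^{(n)}) \in \{-\infty, \infty\}$ -- each trivialize the relevant step.
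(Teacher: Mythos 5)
Your proof is correct and takes essentially the same route as the paper: both apply \cref{lemma_explicit_description_of_omega_and_kappa} and use hypothesis \eqref{item_n_is_new_position} to show the terms with $1 \leq m < n$ impose no constraint (the paper phrases this as $\bpunch^{(n)} = 1_{A-n}$ on $\N_0$, which is the same observation as your ``$i+n \in W(m)$ forces $n \in W(m)$'' step) and hypothesis \eqref{item_original_A_is_punchable_at_n} for the $m=0$ term, then conclude from the definitions of $\nu_A$ and $W(n)$. The only cosmetic difference is that the paper obtains $\nu_A(\bpunch^{(n)}) \geq \ell$ while you obtain $\nu_A(\bpunch^{(n)}) \geq \min(\ell,\nu_2(n))$, which suffices equally.
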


\begin{proof}
    If $n = 0$, then $W(0) = \N$ and the conclusion holds.  Suppose $n \geq 1$.  It follows from \cref{lemma_explicit_description_of_omega_and_kappa} that $\bpunch^{(n)}$ is equal to $1_{A-n}$ on the set $\N_0 \setminus \bigcup_{m=1}^{n-1} (W(m) - n)$. Indeed, if $i \in \N_0 \setminus \bigcup_{m=1}^{n-1} (W(m) - n) = \bigcap_{m=1}^{n-1} \big( \N_0 \setminus (W(m) - n) \big)$, then $i$ belongs to the set in \eqref{eqn_set_indicated_by_omega_n} if and only if $i \in A-n$. If \eqref{item_n_is_new_position} holds, then the set $\bigcup_{m=1}^{n-1} (W(m) - n)$ contains no non-negative integers.  Therefore, the points $\bpunch^{(n)}$ and $1_{A-n}$ are equal on the set $\N_0$.

    Suppose \eqref{item_n_is_new_position} and \eqref{item_original_A_is_punchable_at_n} hold. Since $\bpunch^{(n)} = 1_{A-n}$ and $n + \big(A \cap [2^{\ell}]\big) \subseteq A$, we see that $\nu_A(\bpunch^{(n)}) \geq \ell$.  It follows from the definition of $W(n)$ that
    \[W(n) = n + [2^{\min(\nu_A(\bpunch^{(n)}),\nu_2(n))}] \supseteq n + [2^{\min(\ell,\nu_2(n))}],\]
    whereby $|W(n)| \geq 2^{\min(\ell,\nu_2(n))}$, as desired.
\end{proof}

For $\ell \in \N_0$, we define
\[L(\ell) \defeq \big\{n \in \N \ \big| \ |W(n)| \geq 2^\ell \big\}\]
to be the set of those positive integers $n$ at which the punch window $W(n)$ is of length at least $2^\ell$.  The following lemma shows that the collection of return time sets of the point $(0,1_A)$ to neighborhoods of itself under the shift-punch map is essentially the same as the collection of sets $L(\ell)$, $\ell \in \N_0$.

\begin{lemma} \leavevmode
\label{lemma_punch_windows_and_return_times}
    \begin{enumerate}
        \item \label{item_Lell_contained_in_return_times}
        For all $\eps > 0$, there exists $\ell \in \N_0$ such that
        \[L(\ell) \subseteq R_{\kappa}\big((0,1_A),B_{\eps}((0,1_A))\big).\]

        \item \label{item_return_times_contained_in_Lell}
        For all $\ell \in \N_0$, there exists $\eps > 0$ such that
        \[R_{\kappa}\big((0,1_A),B_{\eps}((0,1_A))\big) \subseteq L(\ell).\]
    \end{enumerate}
\end{lemma}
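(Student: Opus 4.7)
The plan is to read off both containments from a direct dictionary between basic open neighborhoods of $(0, 1_A)$ in $\odometer \times X$ and the quantity $|W(n)|$. A basic neighborhood of $(0, 1_A)$ is specified by a pair $(M, L) \in \N^2$: it consists of $(z, \omega)$ with $z$ projecting to $0$ in $\Z / 2^M \Z$ and $\omega|_{[L]} = 1_A|_{[L]}$. Applied at $\kappa^n(0, 1_A) = (n, \apunch^{(n)})$, this translates to $\nu_2(n) \geq M$ together with $\apunch^{(n)}|_{[L]} = 1_A|_{[L]}$. I will show that, with the specific choice $(M, L) = (\ell, 2^\ell)$, these two conditions are essentially equivalent to $|W(n)| \geq 2^\ell$, which handles both parts at once.

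For the forward direction \eqref{item_Lell_contained_in_return_times}, given $\eps > 0$, I will choose $\ell \in \N_0$ so large that the $(\ell, 2^\ell)$-basic neighborhood lies inside $B_\eps((0, 1_A))$. If $n \in L(\ell)$, then $|W(n)| \geq 2^\ell$; since $W(n)$ is a dyadic interval beginning at $n$ of length a divisor of $2^{\nu_2(n)}$ (a consequence of the definition of $W(n)$ in \cref{sec_punch_system}), this forces $\nu_2(n) \geq \ell$. Moreover, the definition of the punch map $\pi_{\nu_2(n)}$ gives directly that $\apunch^{(n)}(i) = 1_A(i)$ for every $i \in W(n) - n = [|W(n)|] \supseteq [2^\ell]$. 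Both requirements for membership in the basic neighborhood are met, so $n \in R_\kappa((0,1_A), B_\eps((0,1_A)))$.

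For the reverse direction \eqref{item_return_times_contained_in_Lell}, given $\ell \in \N_0$, I will choose $\eps > 0$ small enough that $B_\eps((0, 1_A))$ lies inside the $(\ell, 2^\ell)$-basic neighborhood. If $\kappa^n(0, 1_A) \in B_\eps((0, 1_A))$, then $\nu_2(n) \geq \ell$ and $\apunch^{(n)}|_{[2^\ell]} = 1_A|_{[2^\ell]}$. The crux is to deduce that $\nu_A(\bpunch^{(n)}) \geq \ell$, since this together with $\nu_2(n) \geq \ell$ gives $|W(n)| = 2^{\min(\nu_A(\bpunch^{(n)}), \nu_2(n))} \geq 2^\ell$. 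I will argue by contradiction: suppose $\nu_A(\bpunch^{(n)}) = k < \ell$. The exceptional value $k = -\infty$ is immediately ruled out because it would force $\bpunch^{(n)}(0) = 0$ and $I_{\nu_2(n)}(\bpunch^{(n)}) = \emptyset$, hence $\apunch^{(n)}(0) = \bpunch^{(n)}(0) = 0$, contradicting $\apunch^{(n)}(0) = 1_A(0) = 1$ since $0 \in A$. So $0 \leq k < \ell$. Then the punch window $I_{\nu_2(n)}(\bpunch^{(n)}) = [2^k]$ lies strictly inside $[2^\ell]$, and on $[2^\ell] \setminus [2^k]$ the punch map acts trivially, giving $\bpunch^{(n)}|_{[2^\ell] \setminus [2^k]} = \apunch^{(n)}|_{[2^\ell] \setminus [2^k]} = 1_A|_{[2^\ell] \setminus [2^k]}$. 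Combining this with the containment $A \cap [2^k] \subseteq \supp(\bpunch^{(n)})$ that is built into the definition of $\nu_A$ yields $A \cap [2^\ell] \subseteq \supp(\bpunch^{(n)})$, which forces $\nu_A(\bpunch^{(n)}) \geq \ell$, the desired contradiction. I do not anticipate any serious obstacle; the argument is a short unraveling of the definitions of $\nu_A$ and the punch map, with no reliance on the structural lemmas about dyadic intervals or shift-punch orbits.
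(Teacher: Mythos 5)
Your proof is correct and takes essentially the same approach as the paper: translate closeness of $\kappa^n(0,1_A)$ to $(0,1_A)$ into the two conditions $\nu_2(n)\geq \ell$ and agreement of $\apunch^{(n)}$ with $1_A$ on $[2^\ell]$, then read $|W(n)|\geq 2^\ell$ off the definitions of $\nu_A$ and the punch map. The only (cosmetic) difference is in part (ii): where you run a contradiction with a case split on $\nu_A(\bpunch^{(n)})$, the paper deduces $A\cap[2^\ell]\subseteq \supp(\bpunch^{(n)})$ directly from the pointwise inequality $\apunch^{(n)}\leq \bpunch^{(n)}$ (punches only change $1$'s to $0$'s), which shortcuts your casework.
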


\begin{proof}
    \eqref{item_Lell_contained_in_return_times} \
    Let $\eps > 0$.  Choose $\ell \in \N_0$ sufficiently large so that for all $n \in \N$ and all $\omega \in X$, if $\nu_2(n) \geq \ell$ and $\omega$ and $1_A$ agree on $[2^\ell]$, then $d_{\odometer \times X}\big( (n,\omega), (0, 1_A) \big) < \eps$.

    Let $n \in L(\ell)$.  We see from the definition of $W(n)$ that $\nu_2(n) \geq |W(n)| \geq \ell$ and that the points $\apunch^{(n)}$ and $1_A$ agree on $W(n) - n \supseteq [2^\ell]$.  By the choice of $\ell$, we have that $d\big( \kappa^n (0,1_A), (0, 1_A)\big) < \eps$, whereby $n \in R_{\kappa}\big((0,1_A),B_{\eps}((0,1_A))\big)$, as was to be shown.

    \eqref{item_return_times_contained_in_Lell} \
    Let $\ell \in \N_0$.  Choose $\eps > 0$ so that for all pairs of points $(n,\omega) \in \odometer \times X$, if $(n,\omega)$ and $(0,1_A)$ are at a distance of at most $\eps$, then $\nu_2(n) \geq \ell$ and $\omega$ and $1_A$ agree on $[2^{\ell}]$.

    Let $n \in R_{\kappa}\big((0,1_A),B_{\eps}((0,1_A))\big)$.  Since $\kappa^n(0,1_A) = (n, \apunch^{(n)})$ and $(0,1_A)$ are at a distance of at most $\eps$, we have that $\nu_2(n) \geq \ell$ and $\apunch^{(n)}$ and $1_A$ agree on $[2^{\ell}]$. By the definition of $\pi_{\nu_2(n)}$, we have that $\apunch^{(n)} \leq \bpunch^{(n)}$ pointwise.  Since $\apunch^{(n)}$ and $1_A$ agree on $[2^{\ell}]$, we have that $A \cap [2^\ell] \subseteq \supp(\bpunch^{(n)})$.  Thus, $\nu_A(\bpunch^{(n)}) \geq \ell$. It follows that $|W(n)| = 2^{\min(\nu_A(\bpunch^{(n)}),\nu_2(n))} \geq 2^{\ell}$, whereby $n \in L(\ell)$, as was to be shown.
\end{proof}

We are ultimately interested in the result of the punches made on the set $A$.  Thus, we define
\begin{align}
\label{eqn_result_of_set_after_punches}
    B \defeq \bigcap_{n=0}^\infty \Big( \big( A + n \big) \cup \big(\N_0 \setminus W(n) \big) \Big).
\end{align}
Informally, the set $B$ is the set $A$ after all punches have occurred; this is made precise by the equivalent dynamical characterization of the set $B$ given in \cref{lemma_dynamical_description_of_B}. Since $W(0) = \N_0$, we see from the definition that $B \subseteq A$.  Moreover, we see that if $0 \in A$, then $0 \in B$.

\begin{lemma}
\label{lemma_dynamical_description_of_B}
    For all $n \in \N_0$, we have that $\big(\apunch^{(n)}\big)(0) = 1_B(n)$.
\end{lemma}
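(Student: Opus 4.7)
The plan is to reduce the claim to a straightforward bookkeeping comparison, invoking only the explicit formula for $\supp(\apunch^{(n)})$ from \cref{lemma_explicit_description_of_omega_and_kappa} \eqref{item_explicit_description_of_omega_and_kappa_two} and the defining formula \eqref{eqn_result_of_set_after_punches} for $B$. No further machinery is needed; the whole content of the statement is that the punched-out word read at coordinate $0$ after $n$ steps of $\kappa$ records precisely the membership of $n$ in $B$.

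First, I would specialize \cref{lemma_explicit_description_of_omega_and_kappa} \eqref{item_explicit_description_of_omega_and_kappa_two} at $i = 0$ to conclude that $\big(\apunch^{(n)}\big)(0) = 1$ if and only if, for every $m \in \{0, 1, \ldots, n\}$, either $0 \in A - (n-m)$ or $0 \notin W(m) - n$. Rewriting the two disjuncts: since $m \leq n$, the first is equivalent to $n \in A + m$, and the second is $n \notin W(m)$. Next, I would unfold \eqref{eqn_result_of_set_after_punches}: $n \in B$ if and only if for every $m \in \N_0$, $n \in A + m$ or $n \notin W(m)$. The two conditions are identical except that $m$ ranges over $\{0, \ldots, n\}$ in the first and over all of $\N_0$ in the second.

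The remaining step, and the only ``obstacle,'' is to justify that the constraints for $m > n$ in the definition of $B$ are vacuous. This follows directly from the construction of the punch windows: $W(m) = m + I_{\nu_2(m)}(\bpunch^{(m)})$ is either empty or an interval whose minimum is $m$, so it contains no non-negative integer smaller than $m$. In particular, $n \notin W(m)$ whenever $m > n$, and the disjunction in \eqref{eqn_result_of_set_after_punches} holds automatically on these indices. Combining this with the preceding paragraph yields $\big(\apunch^{(n)}\big)(0) = 1$ iff $n \in B$, i.e., $\big(\apunch^{(n)}\big)(0) = 1_B(n)$, as claimed.
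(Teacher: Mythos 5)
Your proposal is correct and is essentially the same argument as the paper's: specialize \cref{lemma_explicit_description_of_omega_and_kappa} \eqref{item_explicit_description_of_omega_and_kappa_two} at $i=0$, shift the indexing to rewrite the condition as $n \in \bigcap_{m=0}^{n}\big((A+m)\cup(\N_0\setminus W(m))\big)$, and observe that the constraints for $m>n$ in \eqref{eqn_result_of_set_after_punches} are vacuous because $W(m)$, when nonempty, is an interval starting at $m$. No gaps; nothing further is needed.
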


\begin{proof}
    Let $n \in \N_0$.  It follows by \cref{lemma_explicit_description_of_omega_and_kappa} that $\big(\apunch^{(n)}\big)(0) = 1$ if and only if
    \[0 \in \bigcap_{m=0}^{n} \Big( \big( A - (n-m) \big) \cup \big( \N_0 \setminus \big( W(m)-n \big) \big) \Big).\]
    By set algebra, the previous line holds if and only if
    \[n \in \bigcap_{m=0}^{n} \Big( \big( A +m \big) \cup \big( \N_0 \setminus W(m) \big) \Big).\]
    Since for all $m \in \N_0$ greater than $n$, $W(m) \subseteq \{n+1, n+2, \ldots \}$, we see that the previous line holds if and only if
    \[n \in \bigcap_{m=0}^{\infty} \Big( \big( A +m \big) \cup \big( \N_0 \setminus W(m) \big) \Big) = B,\]
    as was to be shown.
\end{proof}

The following lemma connects the times of return of the point $1_B$ to neighborhoods of itself under the usual shift to the family of sets $L(\ell)$ defined above.

\begin{lemma}
\label{lemma_punch_returns_vs_shift_returns}
    For all $\eps > 0$, there exists $\ell \in \N_0$ such that
    \[L(\ell) \subseteq R_\sigma\big(1_B, B_\eps\big(1_B \big) \big).\]
\end{lemma}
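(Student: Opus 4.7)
The plan is to unwind the statement through two previously established lemmas: the dynamical description of the set $B$ (\cref{lemma_dynamical_description_of_B}) and the agreement statement between $\apunch^{(i)}$ and $\apunch^{(m+i)}$ on punch windows (\cref{lemma_second_coords_agree}). Recall that two points of $\{0,1\}^{\N_0}$ are $\eps$-close exactly when they agree on a sufficiently long initial segment $[N]$. So, given $\eps>0$, I will choose $\ell \in \N_0$ large enough so that any two elements of $\{0,1\}^{\N_0}$ which agree on $[2^\ell]$ are at distance less than $\eps$, and then show that for every $n \in L(\ell)$ the points $\sigma^n 1_B$ and $1_B$ agree on $[2^\ell]$.

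To establish the agreement, fix $n \in L(\ell)$, so $|W(n)| \geq 2^\ell$, and in particular $W(n)-n = [|W(n)|] \supseteq [2^\ell]$. For each $i \in [2^\ell]$, I will apply \cref{lemma_second_coords_agree} with the index $m$ there taken to be $n$: since $i \in W(n)-n$, the points $\apunch^{(i)}$ and $\apunch^{(n+i)}$ agree on $W(n)-n-i$, which is a (nonempty) interval containing $0$. Evaluating at coordinate $0$ gives $\bigl(\apunch^{(i)}\bigr)(0) = \bigl(\apunch^{(n+i)}\bigr)(0)$.

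Now I invoke \cref{lemma_dynamical_description_of_B}, which translates the $0$-th coordinate of $\apunch^{(k)}$ into the value of $1_B$ at $k$. This gives $1_B(i) = 1_B(n+i)$ for every $i \in [2^\ell]$, which is precisely the statement that $\sigma^n 1_B$ and $1_B$ agree on $[2^\ell]$. By the choice of $\ell$, this forces $\sigma^n 1_B \in B_\eps(1_B)$, i.e. $n \in R_\sigma\bigl(1_B, B_\eps(1_B)\bigr)$, and the desired inclusion $L(\ell) \subseteq R_\sigma\bigl(1_B, B_\eps(1_B)\bigr)$ follows.

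Given that the punch-window machinery has already been developed, there is no serious obstacle here; the only mild subtlety is matching the two bookkeeping conventions (indices of $\apunch$ versus indices of $1_B$, and the translation from ``punch window of length $\geq 2^\ell$'' to ``initial segment of length $\geq 2^\ell$ on which $\sigma^n 1_B$ matches $1_B$''). Both are handled cleanly by the combination of \cref{lemma_second_coords_agree} and \cref{lemma_dynamical_description_of_B}.
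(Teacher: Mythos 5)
Your proposal is correct and follows essentially the same route as the paper's proof: choose $\ell$ from the metric on $\{0,1\}^{\N_0}$, apply \cref{lemma_second_coords_agree} with $m = n$ to get agreement of $\apunch^{(i)}$ and $\apunch^{(n+i)}$ at coordinate $0$ for $i \in [2^\ell] \subseteq W(n)-n$, and then translate via \cref{lemma_dynamical_description_of_B} into $1_B(i) = 1_B(n+i)$, so $\sigma^n 1_B \in B_\eps(1_B)$. No gaps; the bookkeeping matches the paper's argument.
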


\begin{proof}
Let $\eps > 0$.  Choose $\ell \in \N_0$ so that if two points in $X$ agree on $[2^\ell]$, then they are at a distance of no more than $\eps$.  

Let $n \in L(\ell)$.  By \cref{lemma_second_coords_agree}, for all $i \in [2^\ell] \subseteq W(n) - n$, the points $\apunch^{(i)}$ and $\apunch^{(n+i)}$ agree on $W(n) - n - i$.  In particular, they agree at 0.  It follows from \cref{lemma_dynamical_description_of_B} that for all $i \in [2^\ell]$, $1_B(i) = 1_B(i+n) = 1_{B-n}(i) = (\sigma^{n} 1_B)(i)$.  Since $1_B$ and $\sigma^{n} 1_B$ agree on $[2^\ell]$, we see that $n \in R_\sigma(1_B,B_\eps(1_B))$, as was to be shown.
\end{proof}

\begin{remark}
    If $0 \not\in A$, then $B = \emptyset$.  Indeed, since $0 \not\in A$, for all $n \in \N_0$, the map $\pi_{\nu_2(n)}$ will punch $\bpunch^{(n)}$ at least at the zero coordinate (that is, $n \in W(n)$), changing that coordinate to 0 if it is 1.  Thus, $\apunch^{(n)}(0) = 0$, and according to \cref{lemma_dynamical_description_of_B}, the set $B$ is empty. In this case, we see that $R_\sigma\big(1_B, B_\eps\big(1_B \big) \big) = \N$.  This shows that a result analogous to \cref{lemma_punch_windows_and_return_times} for the point $1_B$ under the shift $\sigma$ does not necessarily hold: it may be that there exists $\ell \in \N_0$ such that for all $\eps > 0$, the set $R_\sigma\big(1_B, B_\eps(1_B ) \big)$ is not contained in $L(\ell)$.
\end{remark}

\subsubsection{Finishing the argument}
\label{sec_proof_that_member_of_synd_idem_filter_is_dcsyndetic}

To finish the proof of \cref{thm_dcs_iff_cs}, we must show that central syndetic sets are dynamically central syndetic.  It suffices by \cref{thm_conditions_for_central_syndetic} and \cref{lemma_upgraded_cssd_property} to show: if $A \subseteq \N$ satisfies the condition in \eqref{eqn_stronger_than_cssd}, then $A$ is dynamically central syndetic. We carry forward all of the notation from the previous sections.

Let $A \subseteq \N$ satisfy the condition in \eqref{eqn_stronger_than_cssd}.  We append $0$ to $A$ and, by a slight abuse of notation, consider $A$ as a subset of $\N_0$.  The key step is to show that the condition in \eqref{eqn_stronger_than_cssd} implies that the sets $L(\ell)$ defined in the previous section are syndetic.

\begin{theorem}
\label{prop_long_punch_times_are_syndetic}
    For all $\ell \in \N_0$, the set $L(\ell)$ is syndetic.
\end{theorem}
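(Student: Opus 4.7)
The plan is to verify the syndeticity criterion \cref{lemma_syndetic_characterization}: find $k \in \N$ such that every dyadic interval in $\cubes_{2^k}$ meets $L(\ell)$. The natural starting point is \cref{lemma_upgraded_cssd_property} applied with $F = A \cap [2^\ell]$ and $m = 2^\ell$, producing the syndetic set $S \defeq A \cap \bigcap_{f \in A \cap [2^\ell]}(A - f) \cap 2^\ell \N$. Every $n \in S$ satisfies $\nu_2(n) \geq \ell$ and $n + (A \cap [2^\ell]) \subseteq A$ --- exactly the two hypotheses needed to invoke \cref{lemma_punches_at_new_elements} at level $\ell$. Choosing $k$ so $S \in \cubes_{2^k}^*$, I will argue that every $n \in S$ in fact lies in $L(\ell)$.

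If $n$ is \emph{new} in the sense of \cref{lemma_punches_at_new_elements}, that lemma immediately gives $|W(n)| \geq 2^\ell$. Otherwise, let $m^* \in (0, n)$ be the greatest integer with $n \in W(m^*)$ and set $n^* \defeq n - m^*$. By \cref{lemma_fractal_windows}, $|W(n)| = |W(n^*)|$, so it suffices to prove $|W(n^*)| \geq 2^\ell$. I will first verify that $n^*$ is itself new via a subtraction-by-$m^*$ bijection between $\{m'' \in (m^*, n]: n \in W(m'')\}$ and $\{m' \in (0, n^*]: n^* \in W(m')\}$, derived by combining \cref{lemma_fractal_windows} with the dyadic-nesting property \cref{lemma_basic_dyadic_cubes_facts}; the maximality of $m^*$ then collapses both sets. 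Applying \cref{lemma_basic_dyadic_cubes_facts} to $W(m^*) \ni n \neq m^*$ forces $\nu_2(m^*) > \nu_2(n)$, which in turn yields $\nu_2(n^*) = \nu_2(n) \geq \ell$.

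The crux is then to verify $n^* + (A \cap [2^\ell]) \subseteq A$, so that \cref{lemma_punches_at_new_elements} applied at $n^*$ delivers $|W(n^*)| \geq 2^\ell$. Since $n^*$ is a positive multiple of $2^\ell$ strictly less than $|W(m^*)| = 2^j$, one has $j \geq \ell + 1$, and for every $a \in A \cap [2^\ell]$ the bound $n^* \leq 2^j - 2^\ell$ forces $n^* + a < 2^j$, so that $n + a = m^* + (n^* + a) \in W(m^*) \cap A$ by the defining property of $S$. To transfer membership back from $n + a$ to $n^* + a$, I plan to exploit the inclusion $\supp(\bpunch^{(m^*)}) \subseteq A - m^*$ (from \cref{lemma_explicit_description_of_omega_and_kappa} taking $m = 0$) together with the agreement of $\apunch^{(n^*)}$ and $\apunch^{(n)}$ on $[2^j - n^*)$ from \cref{lemma_second_coords_agree}, pushing the membership through the explicit formula for $\supp(\apunch^{(\cdot)})$.

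I expect this final transfer step to be the principal obstacle, since the desired conclusion is equivalent to $\nu_A(\bpunch^{(n^*)}) \geq \ell$, which itself governs the very punch at time $n^*$ we wish to control --- a potentially circular situation. If the direct transfer proves insufficient, the fallback is to strengthen $S$ by invoking \eqref{eqn_stronger_than_cssd} with a larger $F = A \cap [2^L]$ for $L \gg \ell$, then descend inductively through the fractal tower of $W$-windows encountered in the chain from $n$ down to $n^*$ (and possibly further), using at each level both the hypothesis inherited from $S$ and the strong fractal property of the surrounding window.
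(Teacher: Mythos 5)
Your outline is correct up through the ``new'' case, but the central claim on which the whole plan rests --- that \emph{every} $n \in S \defeq A \cap \bigcap_{f \in A \cap [2^\ell]} (A-f) \cap 2^\ell \N$ lies in $L(\ell)$ --- is false, and the obstacle you flag at the end is fatal rather than technical. Take $\ell = 0$ and $A = \{0,1,4,5,7,8,9\} \cup \{n \in \N_0 \ | \ n \geq 12\}$, which satisfies \eqref{eqn_stronger_than_cssd} since each relevant intersection contains a cofinite subset of $m\N$. Running the shift-punch dynamics: no nontrivial punch occurs at times $1,2,3$; at time $4$ one has $\nu_2(4)=2$ and $\nu_A(\bpunch^{(4)})=2$, so $W(4)=\{4,5,6,7\}$ and the punch deletes position $7$ (because $7-4=3 \notin A$); consequently $\bpunch^{(7)}(0)=0$, hence $\nu_A(\bpunch^{(7)})=-\infty$, $W(7)=\emptyset$, and $7 \notin L(0)$ even though $7 \in S$. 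In this example $m^*=4$ and $n^*=3$ is indeed ``new'' with $\nu_2(n^*)=\nu_2(n)$, exactly as your intermediate steps predict, but the crux $n^* + (A \cap [2^\ell]) \subseteq A$ fails ($3 \notin A$): knowing $n+a \in A$ and $n+a \in W(m^*)$ says nothing about $n^*+a$, because the self-similarity inside a window (\cref{lemma_second_coords_agree,lemma_fractal_windows}) is a property of the punched configurations, not of $A$. The fallback of enlarging $F$ to $A \cap [2^L]$ cannot repair this: the obstruction sits at the position $n-m^*$, which is not constrained by placing $n$ in deeper intersections of translates of $A$.

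The paper proves something weaker per point but sufficient globally: only that every dyadic interval of a suitable scale $2^{k+1}$ meets $L(\ell)$, by induction along these intervals. Given $Q \in \cubes_{2^{k+1}}$ with left endpoint $a$ and a point $n \in (Q \setminus \{a\}) \cap 2^\ell\N_0 \cap \bigcap_{a' \in A \cap [2^\ell]}(A-a')$, let $m$ be the largest index with $n \in W(m)$; then either $m=0$ and \cref{lemma_punches_at_new_elements} gives $n \in L(\ell)$, or $a < m < n$, in which case $m$ itself lies in $Q \setminus \{a\}$ and $|W(m)| \geq 2^{\ell+1}$ by \cref{lemma_basic_dyadic_cubes_facts} (since $W(m)\setminus\{m\}$ meets $2^\ell\N$), or $0 < m \leq a$, in which case $Q \subseteq W(m)$ and the induction hypothesis applied to $Q-m$ produces some $q \in Q \setminus\{a\}$ with $q-m \in L(\ell)$, which \cref{lemma_fractal_windows} converts into $q \in L(\ell)$. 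Note that in the counterexample above it is $4$, not $7$, that witnesses $L(0)$ in the relevant block --- the witness need not be the chosen element of $S$, and your argument must be restructured block-by-block in this way rather than pointwise.
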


\begin{proof}
    Let $\ell \in \N_0$. We will show first that the set $L(\ell)$ is nonempty.  It follows from \eqref{eqn_stronger_than_cssd} that there exists $n \in 2^\ell \N$ such that $n + \big(A \cap [2^\ell]\big) \subseteq A$.  Let $m \in [n]$ be the greatest integer such that $n \in W(m)$.  If $m=0$, then by \cref{lemma_punches_at_new_elements}, we have that $|W(n)| \geq 2^\ell$, whereby $n \in L(\ell)$.  Otherwise, we have that $1 \leq m < n$ and $n \in W(m)$, whereby $\big( W(m) \setminus \{m\} \big) \cap 2^\ell \N \neq \emptyset$.  It follows from \cref{lemma_basic_dyadic_cubes_facts} that $|W(m)| \geq 2^{\ell + 1}$, so $m \in L(\ell)$.

    By \eqref{eqn_stronger_than_cssd}, \cref{lemma_syndetic_characterization}, and the previous paragraph, there exists $k \in \N$ so that
    \begin{align}
    \label{eqn_cube_syndeticity_from_upgraded_membership_property}
        2^\ell \N_0 \cap \bigcap_{a \in A \cap [2^{\ell}]} (A-a) \in \cubes_{2^{k}}^*
    \end{align}
    and so that $L(\ell) \cap [2^k] \neq \emptyset$.  We will show that $L(\ell)$ is syndetic by showing that $L(\ell) \in \cubes_{2^{k+1}}^*$ and appealing again to \cref{lemma_syndetic_characterization}.

    To show that $L(\ell) \in \cubes_{2^{k+1}}^*$, we will show that for all $a \in 2^{k+1}\N_0$, the set $L(\ell) \cap \big(a + ([2^{k+1}]\setminus \{0\})\big)$ is nonempty.  We proceed by induction on $a$.  We have already shown the base case $a=0$. (Recall that, by definition, $L(\ell) \subseteq \N$, so that $L(\ell) \cap [2^k] \neq \emptyset$ implies $L(\ell) \cap ([2^k] \setminus \{0\}) \neq \emptyset$.)

    Let $a \in 2^{k+1}\N$, and define $Q = a + [2^{k+1}]$.  From \eqref{eqn_cube_syndeticity_from_upgraded_membership_property}, there exists
    \[n \in \big( Q \setminus \{a\} \big) \cap 2^\ell \N_0 \cap \bigcap_{a \in A \cap [2^{\ell}]} (A-a).\]
    Let $m \in [n]$ be the greatest integer such that $n \in W(m)$.  If $m=0$, then by \cref{lemma_punches_at_new_elements}, we have that $|W(n)| \geq 2^\ell$, whereby $n \in L(\ell) \cap \big( Q \setminus \{a\} \big)$, as desired. Otherwise, we have that $1 \leq m < n$.

    If $m > a$, then $m \in Q \setminus \{a\}$.  Since $m < n$ and $n \in W(m)$, we have that $\big( W(m) \setminus \{m\} \big) \cap 2^\ell \N \neq \emptyset$.  It follows from \cref{lemma_basic_dyadic_cubes_facts} that $|W(m)| \geq 2^{\ell + 1}$, so $m \in L(\ell) \cap \big( Q \setminus \{a\} \big)$, as desired.

    If, on the other hand, $m \leq a$, then the fact that $W(m)$ and $Q$ are intersecting dyadic intervals (both contain $n$) implies that $m \in 2^{k+1} \N$, that $Q \subseteq W(m)$, and that $Q - m \in \cubes_{2^{k+1}}$. It follows by the induction hypothesis that there exists $q \in Q \setminus \{a\}$ such that $q-m \in L(\ell) \cap (Q-m)$.  Thus, $|W(q-m)| \geq 2^\ell$. Since $q \in W(m)$ and $m \leq a < q$, it follows from \cref{lemma_fractal_windows} that $W(q-m) = W(q) - m$.  Therefore, $|W(q)| \geq 2^\ell$, whereby $q \in L(\ell) \cap (Q \setminus \{a\})$, as desired.
\end{proof}

Let $B \subseteq A$ be the subset of $A$ defined in \eqref{eqn_result_of_set_after_punches} in the previous section.  Since $0 \in A$, we see that $0 \in B$. It follows by combining \cref{lemma_punch_returns_vs_shift_returns} and \cref{prop_long_punch_times_are_syndetic} that the point $1_B$ is uniformly recurrent in the shift $(X,\sigma)$.  Since $0 \in B$, it follows from \cref{thm:simple_equivalent_dcS} that the set $B \setminus \{0\} \subseteq \N$ is dynamically central syndetic.  Therefore, we have that the set $A \setminus \{0\}$ is dynamically central syndetic, concluding the proof of \cref{thm_dcs_iff_cs}.

While we do not have need for it, note that it follows by combining \cref{lemma_punch_windows_and_return_times} and \cref{prop_long_punch_times_are_syndetic} that the point $(0,1_A)$ is uniformly recurrent in the shift-punch system $(\odometer \times X, \kappa)$.

\section{Proofs of Theorems A, B, C, and D}
\label{sec_dps_sets}

In this section, we gather everything to prove the main theorems from the introduction.

\subsection{Combinatorial characterizations: proof of Theorems \ref{mainthm_ds_equivalents} and \ref{intro_mainthm_full_chars_of_dcsyndetic}}
\label{sec_char_of_ds_and_dcs}

It is easiest to prove \cref{intro_mainthm_full_chars_of_dcsyndetic} first.

\begin{proof}[Proof of \cref{intro_mainthm_full_chars_of_dcsyndetic}]
    This follows immediately by combining \cref{thm_conditions_for_central_syndetic,thm_dcs_iff_cs}.
\end{proof}

\begin{proof}[Proof of \cref{mainthm_ds_equivalents}]
    \eqref{item_intro_ds_def_condition} $\iff$ \eqref{item_intro_translate_belongs_to_sif} \ If follows by \cref{lemma_translates_of_dsyndetic_sets} that $A$ is dynamically syndetic if and only if there exists $n \in \N_0$ such that $A-n$ is dynamically central syndetic.  By \cref{intro_mainthm_full_chars_of_dcsyndetic}, a set is dynamically central syndetic if and only if it belongs to a syndetic, idempotent filter.  The desired equivalence is shown by combining these two results.

    \eqref{item_intro_translate_belongs_to_sif} $\implies$ \eqref{item_intro_ds_combo_condition} \ Suppose that there exists $n \in \N_0$ such that $A' \defeq A - n$ belongs to a syndetic, idempotent filter.  We have by \cref{intro_mainthm_full_chars_of_dcsyndetic} that there exists $B' \subseteq A'$ such that for all finite $F' \subseteq B'$, the set $B' \cap \bigcap_{f' \in F'} (B' - f')$ is syndetic.  Put $B = B' + n$, and note that $B$ is a nonempty subset of $A$.  Let $F \subseteq B$ be finite, and note that $\min F \geq n+1$.  We have that $F-n \subseteq B'$ is finite, and hence that the set
    \[\bigcap_{f' \in F-n} (B' - f')\]
    is syndetic.  But this set is contained in $\bigcap_{f \in F} (B - f)$, demonstrating syndeticity, as desired.

    \eqref{item_intro_ds_combo_condition} $\implies$ \eqref{item_intro_translate_belongs_to_sif} \ Let $b \in B$.  We will show that the set $B - b \subseteq A-b$ satisfies: for all finite $F \subseteq B-b$, the set
    \[(B-b) \cap \bigcap_{f \in F} (B - b - f)\]
    is syndetic.  It will follow by \cref{thm_conditions_for_central_syndetic} that the set $A-b$ is contained in a syndetic, idempotent filter.

    Let $F \subseteq B-b$ be finite.  By assumption, since $\{b\} \cup (F + b) \subseteq B$ is finite, we have that the set 
    \[\bigcap_{f \in \{b\} \cup (F + b)} (B - f) = (B-b) \cap \bigcap_{f \in F} (B - b - f)\]
    is syndetic, as was to be shown.
\end{proof}

As a short application of \cref{intro_mainthm_full_chars_of_dcsyndetic}, we will show that thickly syndetic sets are dynamically central syndetic.  This result has appeared in related forms several times in the literature (see \cite[Thm. 1]{Glasner-Weiss-Interpolation}, \cite[Thm. 2.4]{huang_ye_2005}, \cite[Prop. 4.4]{dong_shao_ye_2012}).

\begin{theorem}
\label{ex_ps_star_is_dcs}
    Every thickly syndetic set is dynamically central syndetic. Equivalently, sets of pointwise recurrence are piecewise syndetic.
\end{theorem}

\begin{proof}
By \cref{lemma_condition_on_subfamily_of_ps_star}, the family $\PS^*$ is a syndetic, translation-invariant (hence, idempotent) filter.  It follows from \cref{intro_mainthm_full_chars_of_dcsyndetic} that every member of $\PS^*$ is dynamically central syndetic.

It follows from the previous paragraph that $\PS^* \subseteq \dcsyndetic$.  Taking the dual classes, we see that the family of sets of pointwise recurrence is contained in the family of piecewise syndetic sets, as desired.
\end{proof}

\subsection{Polynomial set recurrence and Brauer configurations: proof of \texorpdfstring{\cref{thm:dcps_multiple_recurrence}}{Theorem H}}
\label{sec_dcps_and_set_recurrence}

In this section, we will show that -- in a very general sense made precise by \cref{thm:dcps_multiple_recurrence} -- sets of pointwise recurrence are sets of recurrence in ergodic theory.  Denote by $\dcthick$ the family of sets of pointwise recurrence.  In what follows, a \emph{commuting probability measure preserving system} $(X, \mu, T_1, \ldots, T_k)$ is a tuple consisting of a probability measure space $(X, \mu)$ together with commuting measure preserving transformations $T_i: X \to X$, $i = 1, \ldots, k$.  The system is \emph{invertible} if the maps $T_i$, $i = 1, \ldots, k$, are invertible.

Denote by $\filtertwo$ the upward closure of the family of all subsets of $\N$ of the form
\begin{align}
    \label{eqn_poly_return_set}
    \polyret \big(\mu, T_i, p_{i,j}, E \big)_{\substack{i = 1, \ldots, k \\ j = 1, \ldots, \ell}} \defeq \left\{ n \in \N \ \middle | \ \mu\left( \bigcap_{j=1}^{\ell} \left( \prod_{i=1}^{k} T_i^{p_{i,j}(n)} \right)^{-1} E \right) > 0 \right\},
\end{align}
where $k, \ell \in \N$, the tuple $(X, \mu, T_1, \ldots, T_k)$ is an invertible, commuting probability measure preserving system, the set $E \subseteq X$ satisfies $\mu(E) > 0$, and, for $1 \leq i \leq k$, $1 \leq j \leq \ell$, the polynomial $p_{i, j} \in \Q[n]$ satisfies $p_{i,j}(\Z) \subseteq \Z$ and $p_{i,j}(0) = 0$.  Thus, the family $\filtertwo$ consists of times of returns of any set of positive measure in any probability measure space under polynomial iterates of a finite collection of invertible, commuting measure preserving transformations.  Members of the dual family, $\familytwo^*$, are called \emph{sets of polynomial multiple measurable recurrence for invertible commuting transformations}.

\begin{lemma}
\label{lemma_localization_is_syndetic_idempotent_filter}
    The family $\filtertwo$ is an $\IP^*$, idempotent filter.
\end{lemma}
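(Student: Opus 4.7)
The plan is to verify in turn the three assertions: $\filtertwo$ is a filter, every member of $\filtertwo$ is $\IP^*$, and $\filtertwo$ is idempotent.

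First, to see that $\filtertwo$ is a filter, I would take two basic elements $A_s = \polyret(\mu_s, (T_{i,s})_{i=1}^{k_s}, (p_{i,j,s})_{i,j}, E_s)$ for $s = 1, 2$ and form the product system $(X_1\times X_2, \mu_1\times \mu_2)$ equipped with the $k_1+k_2$ commuting transformations $S_i \defeq T_{i,1}\times \mathrm{id}_{X_2}$ for $i\leq k_1$ and $S_{k_1+i}\defeq \mathrm{id}_{X_1}\times T_{i,2}$ for $i\leq k_2$. For $E \defeq E_1\times E_2$ and the obvious block-diagonal choice of polynomials $(q_{i,j})$ of width $\ell_1+\ell_2$ (so that the columns indexed by $j\leq \ell_1$ carry $p_{i,j,1}$ on the first block and $0$ on the second, and vice versa), the intersection defining $\polyret(\mu_1\times\mu_2, (S_i), (q_{i,j}), E)$ is a product set, and its measure factors. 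This shows $A_1\cap A_2 \supseteq \polyret(\mu_1\times\mu_2, (S_i), (q_{i,j}), E)$, so $A_1\cap A_2 \in \filtertwo$.

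For $\IP^*$, I would directly invoke the $\IP$ polynomial Szemer\'edi theorem of Bergelson and McCutcheon~\cite{Bergelson-McCutcheon-IPpolynomialSzemeredi}: the polynomial multiple return time set $\polyret(\mu, T_i, p_{i,j}, E)$ is an $\IP^*$ set whenever the $p_{i,j}$ have zero constant term. Since $\IP^*$ is upward closed, every member of $\filtertwo$ is $\IP^*$.

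For idempotency, by \cref{rmk_showing_idempotency} it suffices to show that for each basic $A = \polyret(\mu, T_i, p_{i,j}, E)$ and each $n\in A$, the set $A-n$ belongs to $\filtertwo$. This is a localization argument. Set
\[
    F \defeq \bigcap_{j=1}^{\ell}\left(\prod_{i=1}^{k} T_i^{p_{i,j}(n)}\right)^{-1} E,
\]
which has positive measure by hypothesis, and define the shifted polynomials
\[
    q_{i,j,n}(m) \defeq p_{i,j}(n+m)-p_{i,j}(n),
\]
which still have zero constant term and integer values on $\Z$. Writing $\prod_i T_i^{p_{i,j}(n+m)} = \prod_i T_i^{p_{i,j}(n)}\prod_i T_i^{q_{i,j,n}(m)}$ and using $F\subseteq (\prod_i T_i^{p_{i,j}(n)})^{-1}E$ for every $j$, a direct set-theoretic inclusion gives
\[
    \bigcap_{j=1}^{\ell}\left(\prod_i T_i^{q_{i,j,n}(m)}\right)^{-1}\! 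F \;\subseteq\; \bigcap_{j=1}^{\ell}\left(\prod_i T_i^{p_{i,j}(n+m)}\right)^{-1}\! E.
\]
Hence the polynomial return time set $B \defeq \polyret(\mu, T_i, q_{i,j,n}, F)$ is contained in $A-n$; since $B\in\filtertwo$ and $\filtertwo$ is upward closed, $A-n\in\filtertwo$, as needed.

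The main obstacle is the idempotency step, because that is where one has to track how the polynomial return times behave under translation of the parameter. The key bookkeeping observation is that shifting $n\mapsto n+m$ splits $p_{i,j}(n+m)$ additively into a fixed part depending on $n$ alone (absorbed into the new set $F$) and a polynomial $q_{i,j,n}(m)$ that still vanishes at $m=0$; this is precisely what lets the localized return times sit inside $A-n$ and keeps the resulting set inside the family $\filtertwo$ itself.
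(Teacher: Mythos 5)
Your proposal is correct and follows essentially the same route as the paper's proof: the product-system construction with block-diagonal polynomials for the filter property, the direct appeal to the Bergelson--McCutcheon $\IP$ polynomial \Szemeredi{} theorem for $\IP^*$-ness, and the localization argument (shifting the polynomials by $p_{i,j}(n)$ and replacing $E$ by the intersected set $F$ of positive measure) for idempotency via \cref{rmk_showing_idempotency}. The only cosmetic difference is that you phrase the containment through $F \subseteq \bigl(\prod_i T_i^{p_{i,j}(n)}\bigr)^{-1}E$ and commutativity rather than expanding the definition of $F$, which is the same computation.
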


\begin{proof}
    By a corollary of the IP Polynomial \Szemeredi{} Theorem of Bergelson and McCutcheon \cite[Thm. 7.12]{Bergelson-McCutcheon-IPpolynomialSzemeredi}, every member of $\filtertwo$ is an IP$^*$ set.  Therefore, the family $\filtertwo$ is IP$^*$.

    To show that $\filtertwo$ is a filter, we must show that the set
    \begin{align}
        \label{eqn_set_to_show_is_filter}
        \polyret \big(\mu, T_i, p_{i,j}, E \big)_{\substack{i = 1, \ldots, k \\ j = 1, \ldots, \ell}} \cap \polyret \big(\nu, S_i, q_{i,j}, F \big)_{\substack{i = 1, \ldots, r \\ j = 1, \ldots, t}}
    \end{align}
    belongs to $\filtertwo$, where $(X, \mu, T_1, \ldots, T_k)$, $p_{i, j} \in \Q[n]$, $E \subseteq X$, $(Y, \nu, S_1, \ldots, S_r)$, $q_{i, j} \in \Q[n]$, and $F \subseteq Y$ are as described above.  Denote by $I$ the identity transformation on both $X$ and $Y$, and consider the commuting probability measure preserving system
    \[\big(X \times Y, \mu \otimes \nu, T_1 \otimes I, \ldots, T_k \otimes I, I \otimes S_1, \ldots, I \otimes S_r \big).\]
    The set $E \times F \subseteq X \times Y$ satisfies $(\mu \otimes \nu)(E \times F) = \mu(E) \nu(F) > 0$. Moreover,
    \begin{gather*}
        \big(\mu \otimes \nu \big) \left( \bigcap_{j_1=1}^{\ell} \bigcap_{j_2=1}^{t} \left( \prod_{i=1}^{k} (T_i \otimes I)^{p_{i,j_1}(n)} \ \cdot \ \prod_{i=1}^{r} (I \otimes S_i)^{q_{i,j_2}(n)}\right)^{-1} (E \times F) \right) = \\
        \big( \mu \otimes \nu \big) \left( \bigcap_{j=1}^{\ell} \left( \prod_{i=1}^{k} (T_i \otimes I)^{p_{i,j}(n)} \right)^{-1} (E \times F) \ \cap \ \bigcap_{j=1}^{t} \left( \prod_{i=1}^{r} (I \otimes S_i)^{q_{i,j}(n)} \right)^{-1} (E \times F) \right) = \\
        \mu\left( \bigcap_{j=1}^{\ell} \left( \prod_{i=1}^{k} T_i^{p_{i,j}(n)} \right)^{-1} E \right) \ \cdot \ \nu\left( \bigcap_{j=1}^{t} \left( \prod_{i=1}^{r} S_i^{q_{i,j}(n)} \right)^{-1} F \right).
    \end{gather*}
    It follows that the set in \eqref{eqn_set_to_show_is_filter} is equal to
    \[\polyret \big( \mu \otimes \nu, U_i, m_{i,j}, E \times F \big)_{\substack{i = 1, \ldots, k+r \\ j = 1, \ldots, \ell t}},\]
    where
    \[U_i \defeq \begin{cases} T_i \otimes I & \text{ if $i \leq k$} \\ I \otimes S_{i-k} & \text{ if $i > k$}\end{cases}, \qquad \text{ and } \qquad m_{i,j} \defeq \begin{cases} p_{i, (j \text{ mod } \ell) + 1} & \text{ if $i \leq k$} \\ q_{i-k, \lceil j / \ell \rceil} & \text{ if $i > k$}\end{cases}.\]
    This shows that the set in \eqref{eqn_set_to_show_is_filter} belongs to $\filtertwo$, as desired.

    Next, we will show that $\filtertwo$ is idempotent. It suffices by \cref{rmk_showing_idempotency} to show that if $D$ is a set of the form in \eqref{eqn_poly_return_set}, then $D \subseteq D - \filtertwo$, that is, for all $m \in D$, the set $D-m \in \filtertwo$.
    
    Let $D$ be a set of the form in \eqref{eqn_poly_return_set}, and let $m \in D$.
    Define
    \[
        E' \defeq \bigcap_{j=1}^{\ell} \left( \prod_{i=1}^{k} T_i^{p_{i,j}(m)} \right)^{-1} E.
    \]
    Since $m \in D$, we have that $\mu(E') > 0$. For $i = 1, \ldots, k$ and $j = 1, \ldots, \ell$, define $q_{i,j} \in \Q[n]$ by
    \[
        q_{i,j}(n) \defeq p_{i,j}(n + m) - p_{i,j}(m).
    \]
    Note that $q_{i,j}(\Z) \subseteq \Z$ and $q_{i,j}(0) = 0$.

    By expanding the definition of $E'$, we get
    \begin{align*}
        \bigcap_{j=1}^{\ell} \left( \prod_{i=1}^{k} T_i^{q_{i,j}(n)} \right)^{-1} E' &= \bigcap_{j=1}^{\ell} \left( \prod_{i=1}^{k} T_i^{p_{i,j}(n+m) - p_{i,j}(m)} \right)^{-1} E' \\
        &= \bigcap_{j=1}^{\ell} \left( \prod_{i=1}^{k} T_i^{p_{i,j}(n+m) - p_{i,j}(m)} \right)^{-1} \left( \bigcap_{j'=1}^{\ell} \left( \prod_{i'=1}^{k} T_{i'}^{p_{i',j'}(m)} \right)^{-1} E \right) \\
        &\subseteq \bigcap_{j=1}^{\ell} \left( \prod_{i=1}^{k} T_i^{p_{i,j}(n+m)} \right)^{-1} E.
    \end{align*}
    It follows that
    \[
        \polyret \big(\mu, T_i, q_{i,j}, E' \big)_{\substack{i = 1, \ldots, k \\ j = 1, \ldots, \ell}} \subseteq \left\{n \in \N \ \middle | \ \mu \left( \bigcap_{j=1}^{\ell} \left( \prod_{i=1}^{k} T_i^{p_{i,j}(n+m)} \right)^{-1} E \right) > 0 \right\} = D - m,
    \]
    whereby $D-m \in \filtertwo$, as desired.
\end{proof}

\begin{theorem}
\label{thm_dcps_is_poly_mult_comm_rec}
    Every set of pointwise recurrence is a $\filtertwo^*$ set, that is, a set of polynomial multiple measurable recurrence for invertible commuting transformations.
\end{theorem}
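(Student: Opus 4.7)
The proof will assemble directly from three ingredients already established in the paper. By \cref{lemma_localization_is_syndetic_idempotent_filter}, the family $\filtertwo$ is an $\IP^*$, idempotent filter. Since every central set is an $\IP$ set (noted at the end of \cref{sec_ip_and_central_sets}), the dual containment $\IP^* \subseteq \central^*$ holds, so $\filtertwo$ is in particular a $\central^*$, idempotent filter. Applying \cref{cor_shifted_class_equalities} \eqref{item_central_star_class_cap} then gives
\[
    \filtertwo \subseteq \dcPS^*.
\]

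The conclusion that $\dcPS \subseteq \filtertwo^*$ follows by family duality, since both inclusions encode the same assertion: for all $A \in \dcPS$ and all $B \in \filtertwo$, the set $A \cap B$ is nonempty. Taking the dual of the containment displayed above and using $(\dcPS^*)^* = \dcPS$ (from the facts in \cref{sec_dual_families}) yields $\dcPS \subseteq \filtertwo^*$, which is precisely the assertion that every dynamically central piecewise syndetic set is a set of polynomial multiple measurable recurrence for invertible commuting transformations.

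The entire argument is therefore a short combination of the polynomial machinery (via \cref{lemma_localization_is_syndetic_idempotent_filter}, which invokes the $\IP$ polynomial \Szemeredi{} theorem of Bergelson and McCutcheon) with the filter-algebraic bridge provided by our characterization of $\dcsyndetic$ as membership in syndetic, idempotent filters. There is no real obstacle here; the work has already been done upstream. The substantive step — and the one I would double-check carefully — is the derivation of $\filtertwo \subseteq \dcPS^*$ from \cref{cor_shifted_class_equalities}, which itself reduces via \cref{lemma_class_cap_is_pr} to showing $\dcsyndetic \classcap \filtertwo \subseteq \dcsyndetic$; this is precisely where \cref{prop:general_abstract_intersection_filter_nonsense} is used to combine a syndetic, idempotent filter containing a given $\dcsyndetic$ set (furnished by \cref{thm_dcs_iff_cs}) with $\filtertwo$ into a single syndetic, idempotent filter whose members are then $\dcsyndetic$ again by \cref{thm_dcs_iff_cs}.
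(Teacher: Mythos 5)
Your proposal is correct and follows essentially the same route as the paper: invoke \cref{lemma_localization_is_syndetic_idempotent_filter}, pass from $\IP^*$ to $\central^*$ via $\central \subseteq \IP$, apply \cref{cor_shifted_class_equalities} to get $\filtertwo \subseteq \dcPS^*$, and dualize. The only difference is that you make the containment $\IP^* \subseteq \central^*$ explicit, which the paper's proof uses silently.
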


\begin{proof}
    It follows by \cref{lemma_localization_is_syndetic_idempotent_filter} that the family $\familytwo$ is a syndetic, idempotent filter.  Therefore, by \cref{intro_mainthm_full_chars_of_dcsyndetic}, we have that $\familytwo \subseteq \dcsyndetic$.  Taking the dual, we see $\dcthick \subseteq \familytwo^*$, as desired.
\end{proof}

To prove \cref{thm:dcps_multiple_recurrence} from the introduction, we need only to remove the assumption on invertibility in \cref{thm_dcps_is_poly_mult_comm_rec}.

\begin{proof}[Proof of \cref{thm:dcps_multiple_recurrence}]
    Denote by $\filter$ the upward closure of the family of all subsets of $\N$ of the form in \eqref{eqn_poly_return_set}, where $k, \ell \in \N$, the tuple $(X, \mu, T_1, \ldots, T_k)$ is a commuting probability measure preserving system, the set $E \subseteq X$ satisfies $\mu(E) > 0$, and, for $1 \leq i \leq k$, $1 \leq j \leq \ell$, the polynomial $p_{i, j} \in \Q[n]$ satisfies $p_{i,j}(\N_0) \subseteq \N_0$ and $p_{i,j}(0) = 0$. To prove \cref{thm:dcps_multiple_recurrence}, we must show that $\dcthick \subseteq \family^*$.
    
    Let $A \in \dcthick$ and let $B \in \family$.  The set $B$ contains a set of the form $\polyret \big(\mu, T_i, p_{i,j}, E \big)$ as described in the previous paragraph.  By the measurable natural extension (cf. \cite[Lemma 7.11]{Bergelson-McCutcheon-IPpolynomialSzemeredi}), there exists an invertible, commuting probability measure preserving system $(Y, \nu, S_1, \ldots, S_k)$ and a measurable set $F \subseteq Y$ such that for all $n_{i, j} \in \N_0$, $1 \leq i \leq k$, $1 \leq j \leq \ell$,
    \begin{align}
        \label{eqn_measurable_natural_extension}
        \nu \left( \bigcap_{j=1}^\ell \left( \prod_{i=1}^{k} S_i^{n_{i,j}} \right)^{-1} F \right) = \mu \left( \bigcap_{j=1}^\ell \left( \prod_{i=1}^{k} T_i^{n_{i,j}} \right)^{-1} E \right).
    \end{align}
    By \cref{thm_dcps_is_poly_mult_comm_rec}, there exists $n \in A$ such that
    \[\nu\left( \bigcap_{j=1}^{\ell} \left( \prod_{i=1}^{k} S_i^{p_{i,j}(n)} \right)^{-1} F \right) > 0.\]
    It follows from \eqref{eqn_measurable_natural_extension} that $n \in \polyret \big(\mu, T_i, p_{i,j}, E \big)$, showing that $A \cap B \neq \emptyset$, as desired.
\end{proof}

\begin{remark}\label{remark:after_bergelson_mccutcheon}
The IP polynomial \Szemeredi{} theorem \cite[Thm 7.12]{Bergelson-McCutcheon-IPpolynomialSzemeredi} shows that $\filtertwo$ is an $\IP^*$ family, while \cref{thm_dcps_is_poly_mult_comm_rec} shows that $\filtertwo$ is an $\dcthick^* = \dcsyndetic$ family. Because there is no containment in either direction between the families $\IP^*$ and $\dcsyndetic$, \cref{thm_dcps_is_poly_mult_comm_rec} is neither an upgrade nor an immediate corollary of the IP polynomial \Szemeredi{} theorem.

An upgrade to the conclusion of the IP polynomial \Szemeredi{} theorem is available in the case that the polynomials in \eqref{eqn_poly_return_set} are required to be linear.  In that case, the family $\familytwo$ can be shown to be IP$_0^*$ (every member has nonempty intersection with every set that contains arbitrarily large finite sums sets, as defined in \eqref{eqn_def_of_fs_set}) by a corollary of a result of Furstenberg and Katznelson \cite[Thm. 10.3]{furstenberg_katznelson_1985}.
It is not known whether every IP$_0^*$ set is $\dcsyndetic$; we pose this as an open problem in \cref{quest_ip_zero_star_dynam_synd}. If true, the linear case of \cref{thm_dcps_is_poly_mult_comm_rec} could be seen as a corollary of the result of Furstenberg and Katznelson.
\end{remark}

We conclude this subsection by showing that sets of pointwise recurrence contain some combinatorial patterns, namely polynomial Brauer-type configurations.

\begin{theorem}
\label{thm:Brauer_and_dcPS}
    Let $A \subseteq \N$ be a set of pointwise recurrence. For all $k \in \N$ and all $p_1, p_2, \ldots, p_k \in \Q[x]$ with $p_i(\Z) \subseteq \Z$ and $p_i(0) = 0$, there exist $x, y \in \N$ such that
    \begin{align}
        \label{eqn_poly_brauer_in_dcps}
        x, y, x + p_1(y), \ldots, x + p_k(y) \in A.
    \end{align}
\end{theorem}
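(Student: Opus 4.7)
The plan is to combine Furstenberg's correspondence principle with \cref{thm_dcps_is_poly_mult_comm_rec}, the invertible precursor of \cref{thm:dcps_multiple_recurrence}. The strategy is first to locate the ``step'' $y$ as an element of $A$ using the polynomial recurrence property of dynamically central piecewise syndetic sets, and then to find the ``base point'' $x$ by applying the correspondence inequality once more.

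By \cref{thm_dcps_and_dps_are_ps}, the set $A$ is piecewise syndetic and therefore has positive upper Banach density. Furstenberg's correspondence principle in its invertible form produces an invertible probability measure preserving system $(X,\mu,T)$ and a measurable set $E \subseteq X$ with $\mu(E) > 0$ such that, for every finite list of integers $n_1, \ldots, n_\ell$ (of either sign),
\[
    d^*\big(\big\{m \in \N \ \big| \ m, m+n_1, \ldots, m+n_\ell \in A\big\}\big) \geq \mu\big(E \cap T^{-n_1}E \cap \cdots \cap T^{-n_\ell}E\big).
\]
Working with an invertible system is essential here, since some of the values $p_i(y)$ below may be negative.

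Next, I would consider the set
\[
    B \defeq \big\{y \in \N \ \big| \ \mu(E \cap T^{-p_1(y)}E \cap \cdots \cap T^{-p_k(y)}E) > 0\big\}.
\]
Adjoining the zero polynomial $p_0 \equiv 0$ (so that $E = T^{-p_0(y)}E$), one recognizes $B$ as a set of the form in \eqref{eqn_poly_return_set} with a single transformation $T$ and the $k+1$ polynomials $p_0, p_1, \ldots, p_k$, each of which satisfies $p_i(\Z) \subseteq \Z$ and $p_i(0) = 0$. Hence $B \in \filtertwo$. By \cref{thm_dcps_is_poly_mult_comm_rec}, $A \in \filtertwo^*$, and so $A \cap B \neq \emptyset$. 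Picking any $y \in A \cap B$, we have $\mu(E \cap T^{-p_1(y)}E \cap \cdots \cap T^{-p_k(y)}E) > 0$, and the correspondence inequality (applied with $n_i = p_i(y)$) yields that the set $\{m \in \N \ | \ m, m+p_1(y), \ldots, m+p_k(y) \in A\}$ has positive upper Banach density, and in particular is nonempty. Choosing any $x$ in this set gives $x, y \in \N$ with $x, y, x+p_1(y), \ldots, x+p_k(y) \in A$, as required.

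There is no substantial obstacle once \cref{thm_dcps_is_poly_mult_comm_rec} is in hand. The only delicate point is the possibility that some $p_i(y)$ is negative, which is handled by using the invertible version of Furstenberg's correspondence principle; this is fully compatible with the condition $p_i(\Z) \subseteq \Z$ under which the family $\filtertwo$ is defined.
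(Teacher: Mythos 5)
Your proof is correct and follows essentially the same route as the paper: \cref{thm_dcps_and_dps_are_ps} gives positive upper Banach density, the invertible Furstenberg correspondence principle produces $(X,\mu,T)$ and $E$, and \cref{thm_dcps_is_poly_mult_comm_rec} applied with the zero polynomial adjoined yields $y \in A$ with $\mu(E \cap T^{-p_1(y)}E \cap \cdots \cap T^{-p_k(y)}E) > 0$, from which the correspondence inequality supplies $x$. Your explicit identification of the return-time set $B \in \filtertwo$ merely spells out what the paper's appeal to \cref{thm_dcps_is_poly_mult_comm_rec} does implicitly, so there is no substantive difference.
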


\begin{proof}
    By \cref{ex_ps_star_is_dcs}, the set $A$ is piecewise syndetic and hence has positive upper Banach density, i.e. $d^*(A) > 0$. By a standard application of the Furstenberg Correspondence Principle (\cite[Thm. 1.1]{Furstenberg-ErgodicBehavior}, see also \cite[Thm. 3.2.5]{mccutcheon_1999}), there exists a probability measure preserving system $(X, \mu, T)$ and a set $E \subseteq X$ with $\mu(E) \geq d^*(A) > 0$ such that for all $n_1, \ldots, n_k \in \Z$,
    \begin{align*}
        \mu \big(T^{-n_1} E \cap \cdots \cap T^{-n_k} E \big) \leq d^* \big( (A-n_1) \cap \cdots \cap (A-n_k) \big).
    \end{align*}
    Setting $p_0 \equiv 0$, by \cref{thm:dcps_multiple_recurrence}, there exists $y \in A$ such that
    \[
        0 < \mu \big(E \cap T^{-p_1(y)} E \cap \cdots \cap T^{-p_k(y)} E \big) \leq d^* \big( A \cap (A-p_1(y)) \cap \cdots \cap (A-p_k(y)) \big).
    \]
    Therefore, there exist (many) $x \in A$ for which \eqref{eqn_poly_brauer_in_dcps} holds, as was to be shown.
\end{proof}

Combining the ideas behind \cref{ex_ps_star_is_dcs}, \cref{thm:dcps_multiple_recurrence}, and \cref{thm:Brauer_and_dcPS} with some more advanced set family algebra, we show in our companion paper \cite{glasscock_le_2025} how these results continue to hold when ``set of pointwise recurrence'' is replaced by ``the intersection of a set of pointwise recurrence and a dynamically central syndetic set.''

\subsection{Partitioning dynamically central syndetic sets: proof of \texorpdfstring{\cref{mainthm_partition_of_dcs_sets}}{Theorem D}}

\label{sec_partitioning_dcs_sets}

Considering some first examples of dynamically central syndetic sets, one may be led to believe that the intersection of two $\dcsyndetic$ sets is again a $\dcsyndetic$ set.  This is true in some special cases: a result of Furstenberg \cite[Thm. 9.11]{furstenberg_book_1981} implies, for example, that the intersection of the set of times of return of a distal point to a neighborhood of itself with any other $\dcsyndetic$ set is $\dcsyndetic$.  It is not, however, true in general, as we show in \cref{mainthm_partition_of_dcs_sets}.  This theorem is best contextualized as a strengthening of \cite[Thm. 2.13]{bergelson_hindman_strauss_2012}, which (when combined with \cref{thm:simple_equivalent_dcS}) says that $\N$ can be partitioned into infinitely many disjoint $\dcsyndetic$ sets.

\begin{proof}[Proof of \cref{mainthm_partition_of_dcs_sets}]
    Let $A \in \dcsyndetic$. We will prove that $A$ can be partitioned into two disjoint $\dcsyndetic$ sets. 
    Then, by repeatedly partitioning the second set into disjoint $\dcsyndetic$ sets,
    \[A = A_0 \cup A_1 = A_{0} \cup A_{10} \cup A_{11} = A_{0} \cup A_{10} \cup A_{110} \cup A_{111} = \cdots,\]
    we see that $A$ can be partitioned into infinitely many disjoint $\dcsyndetic$ sets.
    Let $(X, T)$ be a minimal system and $U \subseteq X$ be an open neighborhood of $x$ such that $A \supseteq R(x, U)$. We consider two cases.

    Case 1: the point $x$ is an isolated point of $X$. By \cref{lemma_isolated_points_and_minimal_systems}, the system $(X,T)$ is finite and periodic. Let $(\T \defeq \R / \Z, S)$ be the rotation on the one-dimensional torus by an irrational $\alpha$. Let $V_1 = (0, 1/2), V_2 = (1/2, 1)$, which are two disjoint, open subsets of $\T$ whose closures contain $0$. Then $\{x\} \times V_1$ and $\{x\} \times V_2$ are two disjoint, open subsets of $X \times \T$ whose closures contain $(x, 0)$. Note that the system $(X \times \T, T \times S)$ is minimal, and so by \cref{thm:simple_equivalent_dcS}, the sets $R((x, 0), \{x\} \times V_1)$ and $R((x, 0), \{x\} \times V_2)$ are $\dcsyndetic$. Moreover,
    \[
        A \supseteq R(x, U) \supseteq R(x, \{x\}) \supseteq R((x, 0), \{x\} \times V_1) \cup R((x, 0), \{x\} \times V_2).
    \]

    Case 2: the point $x$ is a limit point of $U$. Arguing as in the proof of \cref{thm:simple_equivalent_dcS}, there exists a sequence of disjoint open balls $B_{\delta_k}(x_k)$ in $U$ such that $x_k \to x$ and $\delta_k \to 0$. Take 
    \[
        V_0 = \bigcup_{k=1}^{\infty} B_{\delta_{2k}}(x_{2k}) \text{ and } 
        V_1 = \bigcup_{k=1}^{\infty} B_{\delta_{2k - 1}}(x_{2k - 1}).
    \]
    Then $V_0, V_1$ are disjoint open subsets of $U$ satisfying $x \in \overline{V_0} \cap \overline{V_1}$. By \cref{thm:simple_equivalent_dcS}, the sets $R(x, V_0)$ and $R(x, V_1)$ are $\dcsyndetic$ sets and, by construction, they are disjoint subsets of $A$. Put $A_0 \defeq R(x,V_0)$ and $A_1 \defeq A \setminus R(x,V_0)$ to see that $A = A_0 \cup A_1$ is a disjoint partition of $A$ into $\dcsyndetic$ sets, as desired.

    Since $\N$ is a dynamically central syndetic set, there exists a partition $\N = A_0 \cup A_1$ into two disjoint dynamically central syndetic sets $A_0, A_1$.  For any set of pointwise recurrence $B \subseteq \N$, we have the partition $B = (B \cap A_0) \cup (B \cap A_1)$.  Since $(B \cap A_0) \cap A_1 = \emptyset$, the set $B \cap A_0$ is not a set of pointwise recurrence, and the same holds for $B \cap A_1$.
\end{proof}

\section{Open questions}
\label{sec:open_questions}

We collect below a number of open questions and directions suggested by the results in this paper.

\begin{question}
\label{quest_generalizations_to_other_grps}
    To which other group and semigroup actions do the main results in this paper generalize?  For example, a generalization of \cref{intro_mainthm_full_chars_of_dcsyndetic} to $\Z$ might read: a subset of $\Z$ is dynamically central syndetic if and only if it is a member of a syndetic, idempotent filter on $\Z$.
\end{question}

Before attempting to answer \cref{quest_generalizations_to_other_grps}, it would be necessary to decide how the family of dynamically central syndetic subsets of $\Z$ is defined.  We discuss this at more length in point \eqref{item_generalization_to_z} at the beginning of \cref{sec_dsyndetic}.\\

By definition, a subset of $\N$ is dynamically central syndetic if it contains a set of the form $R(x,U)$, where $(X,T)$ is a minimal system, $x \in X$, and $U \subseteq X$ is an open neighborhood of $x$.  It can happen, however, that a set of the form $R(x,U)$ where $x \not \in \overline{U}$ is dynamically central syndetic.  Recall example \eqref{item_dcs_two_adic_example} in \cref{sec_dsyndetic}: the set $A$ of positive integers with even 2-adic valuation is dynamically central syndetic because the point $1_{A \cup \{0\}}$ is uniformly recurrent in the full shift $(\{0,1\}^{\N_0},\sigma)$ and $A = R_\sigma(1_{A \cup \{0\}}, [1]_0)$ where $1_{A \cup \{0\}} \in [1]_0$. Since $\{0,1\}^{\N_0}$ is the disjoint union of the cylinders $[0]_0$ and $[1]_0$, by properties of the 2-adic valuation, we see that
\[A / 2 = \N \setminus A = R_\sigma(1_{A \cup \{0\}}, [0]_0).\]
By \cref{lemma_dilates_of_dsyndetic_sets}, this set is dynamically central syndetic despite the fact that $1_{A \cup \{0\}} \not\in [0]_0$.  This leads us naturally to the following question.

\begin{question}
    \label{quest_nec_and_suff_for_rxu_dcs}
    Let $(X,T)$ be a minimal system, $x \in X$, and $U \subseteq X$ be nonempty and open.  Give necessary and sufficient conditions for $x$ and $U$ so that the set $R(x,U)$ is dynamically central syndetic.
\end{question}

There is precedent for an answer to a related question in the family of central sets (cf. \cite[Def. 8.3]{furstenberg_book_1981}): \emph{If $U \subseteq X$ is clopen, then the set $R(x,U)$ is central if and only if $U$ contains a point $y$ proximal to $x$, that is, $\inf_{n \in \N} d_X(T^n x, T^n y) = 0$.}  Indeed, it is a fact \cite[Lem. 19.22, Thms. 19.23, 19.25]{hindman_strauss_book_2012} that a pair of points $x, y \in X$ is proximal if and only if there exists an idempotent ultrafilter $p \in \beta \N$ such that $T^p x = y$.  If $R(x,U)$ is central, then it belongs to a minimal, idempotent ultrafilter $p$, and so $T^p x \in \overline{U} = U$.  On the other hand, if $x$ is proximal to a point $y \in U$, then there is a minimal, idempotent ultrafilter $p$ such that $T^p x = y$, and so $R(x,U) \in p$.  It seems plausible that an analogous answer to \cref{quest_nec_and_suff_for_rxu_dcs} could be given in terms of some kind of proximality.\\
  
Our main result, \cref{intro_mainthm_full_chars_of_dcsyndetic}, gives that if $A \subseteq \N$ belongs to a syndetic, idempotent filter, then there exists a minimal system $(X,T)$, a point $x \in X$, and an open set $U \subseteq X$ containing $x$ such that $R(x,U) \subseteq A$.  The proof of \cref{intro_mainthm_full_chars_of_dcsyndetic}, however, does not seem to give any indication as to the dynamical nature of the system $(X,T)$.  When, for example, can it be guaranteed that $(X,T)$ is totally minimal, weakly mixing, distal, point distal, a nilsystem, or equicontinuous?  Here, for example, is a concrete question along these lines.  A point $x$ is called \emph{distal} if it is proximal only to itself.

\begin{question}
\label{question_cstar_idempo_filter_implies_pt_distal}
    Does every member of an $\IP^*$, idempotent filter contain a set of the form $R(x,U)$, where $(X,T)$ is a minimal system, $x \in X$ is a distal point, and $U \subseteq X$ is an open neighborhood of $x$?
\end{question}

\cref{question_cstar_idempo_filter_implies_pt_distal} is motivated by a result of Furstenberg \cite[Thm 9.11]{furstenberg_book_1981}: a point $x \in X$ is a distal point if and only if for all open neighborhoods $U \subseteq X$ of $x$, the set $R(x,U)$ belongs to $\IP^*$ if and only if for all open neighborhoods $U \subseteq X$ of $x$, the set $R(x,U)$ belongs to $\central^*$ (the family of sets that have nonempty intersection with all central sets).  Note that the family of thickly syndetic sets, $\PS^*$, is a $\central^*$, idempotent filter.  Thus, an ostensibly easier variant of \cref{question_cstar_idempo_filter_implies_pt_distal} is attained by replacing $\IP^*$ with $\PS^*$.

While IP sets are sets of pointwise recurrence for distal points, we show in the companion paper \cite{glasscock_le_2025} that sets of pointwise recurrence need not be IP.  A positive answer to \cref{question_cstar_idempo_filter_implies_pt_distal} would show, via the following lemma, that a set of pointwise recurrence for distal points must be a translate of an IP set.

\begin{lemma}
    Suppose the answer to \cref{question_cstar_idempo_filter_implies_pt_distal} is positive. If $A \subseteq \N$ is a set of pointwise recurrence for all distal points (ie., for all systems $(X,T)$, all distal $x \in X$, and all open $U \subseteq X$ containing $x$, the set $A \cap R(x,U) \neq \emptyset$), then there exists $n \in \N$ such that $A - n \in \IP$.
\end{lemma}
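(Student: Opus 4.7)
The plan is a contrapositive argument leveraging the hypothesized answer to \cref{question_cstar_idempo_filter_implies_pt_distal}. Assume $A - n \notin \IP$ for every $n \in \N$; I will construct an $\IP^*$, idempotent filter containing $\N \setminus A$ and use it to exhibit a distal return-time set contained in $\N \setminus A$, contradicting the hypothesis on $A$.

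First I would dispose of the trivial case in which $A$ itself is an $\IP$ set. If $\FS(x_i)_{i=1}^\infty \subseteq A$, then $A - x_1 \supseteq \FS(x_i)_{i=2}^\infty$ is an $\IP$ set with $x_1 \in \N$, and the lemma already holds. So I may assume $A \notin \IP$, i.e., $\N \setminus A \in \IP^*$. Combined with the contrapositive hypothesis, this gives that $B - n \in \IP^*$ for every $n \in \N_0$, where $B \defeq \N \setminus A$. The filter
\[\filter \defeq \upclose\left\{\bigcap_{n \in F}(B - n) \ \middle| \ F \subseteq \N_0 \text{ finite}\right\}\]
then sits inside $\IP^*$ since the latter is a filter. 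By \cref{rmk_showing_idempotency} and \cref{lemma_filter_algebra}, idempotency of $\filter$ reduces to verifying that $(B - n_0) - \filter = \N$ for each generator $B - n_0$, which is immediate because $B - (n_0 + n)$ is itself a generator of $\filter$ for every $n \in \N$.

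Applying the hypothesized positive answer to \cref{question_cstar_idempo_filter_implies_pt_distal} to the member $B$ of this $\IP^*$, idempotent filter yields a minimal system $(X,T)$, a distal point $x \in X$, and an open neighborhood $U \ni x$ with $R(x,U) \subseteq B = \N \setminus A$. This contradicts the assumption that $A \cap R(x,U) \neq \emptyset$ for every distal point $x$ in a system and every open neighborhood $U$ of $x$. I do not expect any real obstacle: every step is a direct application of the family-algebraic machinery of \cref{sec_combinatorics}, and the substantive content lies entirely in the hypothesized answer to \cref{question_cstar_idempo_filter_implies_pt_distal}.
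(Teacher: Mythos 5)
Your proof is correct and is essentially the paper's argument run in contrapositive form: the paper packages the same data into the dual family $\class^*$ of $\class \defeq \{A \subseteq \N \mid \exists \ell \in \N,\ A-\ell \in \IP\}$, observes that $\class^*$ is an $\IP^*$, translation-invariant (hence idempotent) filter, and applies the hypothesized answer to \cref{question_cstar_idempo_filter_implies_pt_distal} to its members, which is exactly your mechanism with the concrete filter generated by $\{(\N\setminus A)-n \mid n \in \N_0\}$ and the single member $B=\N\setminus A$. One cosmetic remark: when invoking \cref{rmk_showing_idempotency}, the special sets are the finite intersections $\bigcap_{n\in F}(B-n)$ generating $\filter$, not only the single translates $B-n_0$, though the identical one-line computation (equivalently, noting that $\filter$ is translation invariant) covers them.
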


\begin{proof}
    Consider the family
    \begin{align*}
        \class &\defeq \big\{ A \subseteq \N \ \big | \ \exists \ \ell \in \N, \ \ A-\ell \in \IP \big\}.
    \end{align*}
    Using the fact that the family $\IP$ is partition regular, it is not hard to show that
    \begin{align*}
        \class^* &= \big\{ B \subseteq \N \ \big | \ \forall \ \ell \in \N, \ B \cap (B-1) \cap \cdots \cap (B-\ell) \in \IP^* \big\}.
    \end{align*}
    The family $\class^*$ is an $\IP^*$, translation-invariant (hence, idempotent) filter.

    Suppose $A \subseteq \N$ is a set of pointwise recurrence for all distal points.  We will use the positive answer to \cref{question_cstar_idempo_filter_implies_pt_distal} to show that $A \in \class$ and conclude the proof of the lemma.

    To show that $A \in \class$, we will show that $A \in (\class^*)^*$.  Let $B \in \class^*$.  We must show that $A \cap B \neq \emptyset$.  By the positive answer to \cref{question_cstar_idempo_filter_implies_pt_distal}, the set $B$ contains the times of returns of a distal point to a neighborhood of itself.  By assumption, the set $A \cap B \neq \emptyset$, as was to be shown.
\end{proof}

It was shown in \cref{lemma_translates_of_dsyndetic_sets} \eqref{item_dcs_translates_of_dcs_are_dcs} that the family $\dcsyndetic$ is idempotent: $\dcsyndetic \subseteq \dcsyndetic + \dcsyndetic$.  Is this an equality, and, if not, how close is it to one?

\begin{question}
\label{quest_is_dcs_plus_dcs_equal_to_dcs}
    Is it the case that
    \[\dcsyndetic = \dcsyndetic + \dcsyndetic?\]
    Because it is known that $\dcsyndetic \subseteq \dcsyndetic + \dcsyndetic$, it is equivalent to ask: If $A \subseteq \N$ is such that the set
    \[A - \dcsyndetic \defeq \big\{ n \in \N \ \big | \ A - n \in \dcsyndetic \big\}\]
    is dynamically central syndetic, is $A$ dynamically central syndetic?    
\end{question}

We were not able to show that a set $A \subseteq \N$ is dynamically central syndetic even under the assumption that every shift $A - n$, $n \in \N$, is dynamically central syndetic.  It may be the case that a set with this property is actually thickly syndetic.

The dual form of \cref{quest_is_dcs_plus_dcs_equal_to_dcs} is equally appealing.
Recall that $\dcthick$ denotes the family of sets of pointwise recurrence.
By \cref{lemma_filter_algebra} \eqref{item_sum_dual}, the equivalent, dual form asks: \emph{Is $\dcthick \subseteq \dcthick + \dcthick$, that is, is the family of sets of pointwise recurrence idempotent?}\\

It is simple to see from the definitions that if $A \subseteq \N$ is a set of pointwise recurrence, then for all dynamically central syndetic sets $B \subseteq \N$, there exists $n \in A$ such that the set $B-n$ is dynamically central syndetic.  Does the converse hold, that is, does this property characterize being a set of pointwise recurrence?

\begin{question}
\label{quest_alt_char_of_dcthick}
    Is it true that a set $A \subseteq \N$ is a set of pointwise recurrence if and only if for all $B \in \dcsyndetic$, there exists $n \in A$ such that $B-n \in \dcsyndetic$?
\end{question}

If we denote by $\dcsyndetic - \dcsyndetic$ the family of sets of the form $B - \dcsyndetic$ where $B \in \dcsyndetic$, then \cref{quest_alt_char_of_dcthick} asks whether or not $\dcthick = (\dcsyndetic - \dcsyndetic)^*$.  The equivalent, dual form of this equality is $\dcsyndetic = \dcsyndetic - \dcsyndetic.$ Thus, an equivalent form of \cref{quest_alt_char_of_dcthick} asks whether or not a  `4set $A \subseteq \N$ is dynamically central syndetic if and only if there exists $B \in \dcsyndetic$ such that $B - \dcsyndetic \subseteq A$.  An answer to \cref{quest_nec_and_suff_for_rxu_dcs} would likely be very helpful in addressing this question.\\

The final question pertains to the possibility of underlying global dynamical structure in sets which are combinatorially very large. A subset of $\N$ is IP$_0$ if it contains arbitrarily large finite sums sets, configurations of the form in \eqref{eqn_def_of_fs_set}.  A set is IP$_0^*$ if it has nonempty intersection with every IP$_0$ set.  Since work of Furstenberg and Katznelson \cite{furstenberg_katznelson_1985}, the families of IP$_0$ and IP sets have played an important role in combinatorial refinements of the multiple recurrence results behind \Szemeredi{}'s theorem.

\begin{question}
\label{quest_ip_zero_star_dynam_synd}
    Are IP$_0^*$ sets dynamically (central) syndetic?
\end{question}

There is some precedent for a positive answer if we seek ``local'', instead of global, dynamical structure.  It is a result of Bergelson, Furstenberg, and Weiss \cite{bergelson_furstenberg_weiss_2006} that $\Delta_0^*$ sets -- sets that have nonempty intersection with all sets that contain arbitrarily large, finite difference sets -- are piecewise Bohr$_0$, that is, contain the intersection of a Bohr$_0$ set and and a thick set.  Host and Kra \cite{Host-Kra_nilbohr} generalized this result, showing that members of the family SG$_d^*$ -- a related class of combinatorially large sets -- are piecewise nil-Bohr$_0$.

Our work gives a positive answer to the very first case of \cref{quest_ip_zero_star_dynam_synd}.  A set is IP$_2^*$ if for all $x, y \in \N$, at least one of $x$, $y$, or $x+y$ belong to the set.  It follows from \cref{thm:Brauer_and_dcPS} that every set of pointwise recurrence (and, hence, every dynamically thick set) contains a configuration of the form $\{x, y, x+y\}$.  The equivalent, dual statement -- that every IP$_2^*$ set is dynamically central syndetic -- answers the first non-trivial case of \cref{quest_ip_zero_star_dynam_synd}.  The next case -- Are IP$_3^*$ sets dynamically syndetic? -- is open, as is the ostensibly much simpler question: \emph{If $A \subseteq \N$ is an IP$_3^*$ set, does there exists $n \in \N$ such that $A \cap (A-n)$ is syndetic?}  A strengthening of the dual form of this line of questioning is formulated as Question 7.5 in \cite{glasscock_le_2025}.

\bibliographystyle{abbrv}
\bibliography{dynthick}

\bigskip
\bigskip
\footnotesize
\noindent
Daniel Glasscock\\
\textsc{University of Massachusetts Lowell}\par\nopagebreak
\noindent
\href{mailto:daniel_glasscock@uml.edu}
{\texttt{daniel{\_}glasscock@uml.edu}}

\bigskip
\noindent
Anh N. Le\\
\textsc{University of Denver}\par\nopagebreak
\noindent
\href{mailto:daniel_glasscock@uml.edu}
{\texttt{anh.n.le@du.edu}}

\end{document}